\newtheorem{theorem}{Theorem}
\newtheorem{lemma}[theorem]{Lemma}
\newtheorem{corollary}[theorem]{Corollary}
\newtheorem{proposition}[theorem]{Proposition}
\newtheorem{example}[theorem]{Example}
\newtheorem{conjecture}[theorem]{Conjecture}
\newcommand{\tto}{\twoheadrightarrow}
\font\sc=rsfs10
\newcommand{\cC}{\sc\mbox{C}\hspace{1.0pt}}
\newcommand{\cI}{\sc\mbox{I}\hspace{1.0pt}}
\newcommand{\cS}{\sc\mbox{S}\hspace{1.0pt}}
\newcommand{\cD}{\sc\mbox{D}\hspace{1.0pt}}
\newcommand{\cA}{\sc\mbox{A}\hspace{1.0pt}}
\font\scc=rsfs7
\newcommand{\ccC}{\scc\mbox{C}\hspace{1.0pt}}
\begin{document}

\title[Diagrams and discrete extensions ]{Diagrams and discrete extensions\\ for finitary $2$-representations}
\author{Aaron Chan and Volodymyr Mazorchuk}
%\date{\today}

\begin{abstract}
In this paper we introduce and investigate the notions of diagrams and
discrete extensions in the study of finitary $2$-representations of 
finitary $2$-categories.
\end{abstract}

\maketitle

\section{Introduction and description of the results}\label{s1}

Abstract $2$-representation theory emerged from the ideas of, among others, the papers \cite{BFK,CR,Ro,Ro2},
following growing success of application of the categorification philosophy originating in  \cite{Cr,CF}
to various areas of mathematics, see e.g. \cite{Kh,St,CR}. A systematic study of finitary $2$-representations
of finitary $2$-categories, which is a natural $2$-analogue of the study of finite dimensional modules
over finite dimensional algebras, was initiated in \cite{MM1} and then continued in 
\cite{MM2,MM3,MM4,MM5,MM6,Xa,Zh1,Zh2,GM1,GM2,Zi}, see also references therein. For applications
of this theory,  see, in particular,  \cite{KiMa}.

The main emphasis in the above mentioned papers was put on finding a ``correct'' $2$-analogue for the notion
of a simple module. The most reasonable candidate, called {\em simple transitive $2$-representation} was
proposed in \cite{MM5} where it was also shown that such $2$-representations can be classified for
a large class of finitary  $2$-categories (further classification results of this kind were obtained in 
\cite{Zh2,Zi}). In \cite{MM6}, the classification from \cite{MM5} was extended to a larger class of both,
finitary $2$-representation and finitary $2$-categories, in the spirit of some results from \cite{Ro2}.
The structure of other classes of $2$-representations is, at the moment, completely unclear and very
few methods for construction and study of such representations exist.

The main motivation of the present paper is to develop some basic combinatorial tools for construction 
and study of arbitrary finitary $2$-representations of finitary $2$-categories. We primarily focus on 
finding a proper $2$-analogue for the classical representation theoretic notions of 
\begin{itemize}
\item the diagram of a module (e.g. in the sense of \cite{Al});
\item the first extension group between two modules.
\end{itemize}
These two problems are closely related as edges in the diagram of a module usually correspond to certain
first extension groups between simple subquotients.

The main difference between $2$-representation theory and classical representation theory is that
$2$-representations have two layers of different flavor: a discrete layer of objects and a linear layer
of morphisms. The outcome of this is that our $2$-analogue of the diagram for a module is fairly 
easy to define and describe. The reason for that is the fact that this diagram is related to the
discrete layer which makes things easy and allows the diagram to carry all necessary information 
(in a strong contrast to the general case of \cite{Al}). The situation with the first extension groups 
is exactly the opposite. The discrete environment in which we try to formulate our definition does
not allow us to use any classical tool (the latter strongly depend on the group version of additivity as opposed
to the semigroup version which we have). Put shortly: the nature of the difficulties lies in the fact that
one cannot subtract functors.

We propose the notion which we call {\em discrete extension} and which is designed to be a certain 
combinatorial tool allowing to record some information about how different $2$-representations can 
be ``glued together''. It is by far not as good as its classical linear analogue. Nevertheless, 
we show that this notion has some interesting and non-trivial properties and can be applied 
to understand and construct  new classes of $2$-representations.

The paper is organized as follows: In Section~\ref{s2} we collected all necessary preliminaries
on finitary $2$-categories and their $2$-representations and provide all relevant references. 
In Section~\ref{s2-5} we introduce and study the notion of the apex for a
transitive $2$-representation of a finitary  $2$-category. This is inspired by the corresponding
notion in representation theory of semigroups, see e.g. \cite{GMS} and references therein.
In Section~\ref{s3} we define and study the notion of a diagram for a finitary $2$-representation 
of a finitary  $2$-category. The main result in this section is Theorem~\ref{thm6} which provides 
a non-trivial sufficient condition for a diagram of a $2$-representation to be a disjoint union 
of vertices. We illustrate this theorem by applying it to two examples related to $2$-categories 
of Soergel bimodules over (non simply laced) Dihedral groups.

Section~\ref{s4} introduces the notion of the {\em decorated} diagram for a 
finitary $2$-representation of a finitary  $2$-category and uses this notion to define discrete
extensions of $2$-representation. In Section~\ref{s5} we study, in detail, decorated diagrams and
discrete extensions for $2$-representations of the $2$-category $\cC_A$ of projective functors for a
finite dimensional algebra $A$. Here, Theorem~\ref{thm32} provides a rough description of
discrete extensions between  cell $2$-representations of $\cC_A$ and Subsection~\ref{s5.3}
describes some possibilities for decorated diagrams which can appear in the study of arbitrary 
finitary $2$-representations of  $\cC_A$.

The last section of the paper, Section~\ref{s6}, contains one general result and two interesting
examples. The general result,  Theorem~\ref{thm41}, gives a non-trivial sufficient condition for the discrete
extension set between two simple transitive $2$-representation to be empty. Subsection~\ref{s6.2}
provides an explicit construction of an unexpected non-trivial discrete extension between two 
cell $2$-representations for the $2$-category of Soergel bimodules of type $A_2$. The unexpected 
feature of this example is due to the fact that the left cells corresponding to these two 
$2$-representations are not neighbors with respect to the left Kazhdan-Lusztig order on the
corresponding Coxeter group. The last subsection provides a similar construction for 
the $2$-category obtained as the external tensor product $\cC_A\boxtimes \cC_{A^{\mathrm{op}}}$.
\vspace{5mm}

{\bf Acknowledgment.} The second author is partially supported by the Swedish Research Council.
We thank Vanessa Miemietz  and Jakob Zimmermann for very helpful and stimulating discussions.

\section{Finitary $2$-categories and their $2$-representations}\label{s2}

\subsection{Basic notation}\label{s2.1}

We work over a fixed algebraically closed field $\Bbbk$ and we try to follow all notational conventions
from \cite{MM6}. We abbreviate $\otimes_{\Bbbk}$ by $\otimes$.
We also refer the reader to \cite{Le,McL} for generalities on $2$-categories.

\subsection{Finitary categories}\label{s2.2}

A $\Bbbk$-linear category is called {\em finitary} if it is additive, idempotent split and Krull-Schmidt
with finitely many isomorphism classes of indecomposable objects and finite dimensional $\Bbbk$-vector 
spaces of morphisms.

A {\em finitary} $2$-category $\cC$ is a $2$-category with finitely many objects whose morphism categories
are finitary $\Bbbk$-linear categories (with composition preserving both the additive and the $\Bbbk$-linear
structures), together with the additional assumption that all identity 
$1$-morphisms in $\cC$ are indecomposable.

A {\em finitary} $2$-representation of a finitary $2$-category $\cC$ is a (strict) $2$-functor 
$\mathbf{M}$ which assigns to each object of $\cC$ a (small) finitary $\Bbbk$-linear category, 
to each $1$-morphism an additive functor, and to each $2$-morphism a natural transformation
of functors. We denote by $\cC\text{-}\mathrm{afmod}$ the $2$-category of all 
finitary $2$-representations of $\cC$.  In the category $\cC\text{-}\mathrm{afmod}$, $1$-morphisms
are $2$-natural transformations and $2$-morphisms are modifications. Two $2$-representations
are called {\em equivalent} if there is a $2$-natural transformation between them whose value at
any object is an equivalence of categories.

For every $\mathtt{i}\in\cC$, we denote by $\mathbf{P}_{\mathtt{i}}$ the corresponding {\em principal}
$2$-representation $\cC(\mathtt{i},{}_-)$ of $\cC$.

Given a finitary $2$-representation $\mathbf{M}$ of $\cC$ and any small finitary $\Bbbk$-linear category
$\mathcal{A}$, we denote by  $\mathbf{M}^{\boxtimes\mathcal{A}}$ the {\em inflation} of $\mathbf{M}$
by $\mathcal{A}$ as described in \cite[Subsection~3.6]{MM6}.

\subsection{Combinatorics}\label{s2.3}

We denote by $\mathcal{S}[\cC]$ the set of isomorphism classes of indecomposable $1$-morphisms in $\cC$. 
This has the natural structure of a {\em multisemigroup}, see \cite{MM2} for details and \cite{KM} 
for generalities on multisemigroups. We denote by $\leq_L$, $\leq_R$ or $\leq_J$ the
{\em left}, {\em right} and {\em two-sided} preorders on $\mathcal{S}[\cC]$, respectively. 
Equivalence classes for these preorders are called respective {\em cells} and the corresponding 
equivalence relations are denoted $\sim_L$, $\sim_R$ and $\sim_J$, respectively.

A two-sided cell $\mathcal{J}$ is called {\em strongly regular} provided that 
\begin{itemize}
\item all left (resp. right) cells inside  $\mathcal{J}$ are not comparable with respect 
to the left (resp. right) order;
\item the intersection of any left and any right cell inside $\mathcal{J}$ consists of 
exactly one isomorphism class of $1$-morphisms.
\end{itemize}
A two-sided cell $\mathcal{J}$ of a finitary $2$-category $\cC$ is called {\em idempotent} provided that
there are $1$-morphisms $\mathrm{F}$, $\mathrm{G}$ and $\mathrm{H}$ in $\mathcal{J}$ such that 
$\mathrm{H}$ is isomorphic to a direct summand of $\mathrm{F}\circ \mathrm{G}$. This notion is 
an analogue of the notion of a regular $\mathcal{J}$-class for finite semigroups. 

For a left cell $\mathcal{L}$, we denote by $\mathtt{i}_{\mathcal{L}}$ the object in $\cC$ with the
property that all $1$-morphisms in $\mathcal{L}$ start at this object.
For a left (right, two-sided) cell $\mathcal{X}$, we denote by $\mathrm{F}{(\mathcal{X})}$ a multiplicity-free 
direct sum of all indecomposable $1$-morphisms in $\mathcal{X}$.
For a left cell $\mathcal{L}$, we denote by $\mathbf{C}_{\mathcal{L}}$ the corresponding 
{\em cell $2$-representation} as in \cite[Subsection~6.5]{MM2}, see also \cite[Subsection~4.5]{MM1}.

\subsection{Abelian $2$-representations and abelianization}\label{s2.4}

An {\em abelian} $2$-representation of a finitary $2$-category $\cC$ is a (strict) $2$-functor 
$\mathbf{M}$ which assigns to each object of $\cC$ a (small) $\Bbbk$-linear category equivalent
to the module category of a finite dimensional associative and unital $\Bbbk$-algebra, 
to each $1$-morphism a right exact functor, and to each $2$-morphism a natural transformation
of functors. The $2$-category of abelian $2$-representations of $\cC$ is denoted by
$\cC\text{-}\mathrm{mod}$. 

There is a diagrammatic {\em abelianization} $2$-functor  from $\cC\text{-}\mathrm{afmod}$ to 
$\cC\text{-}\mathrm{mod}$ as described in \cite[Subsection~2.7]{MM6}. It sends
a finitary $2$-representation $\mathbf{M}$ to an abelian $2$-representation
$\overline{\mathbf{M}}$.

\subsection{Weakly fiat $2$-categories}\label{s2.5}

A finitary $2$-category $\cC$ is called {\em weakly fiat} provided that it has a weak 
antiautoequivalence $*$ (which reverses both $1$-morphisms and $2$-morphisms) such that,
for any $1$-morphism $\mathrm{F}$, there are $2$-morphisms between $\mathrm{F}\mathrm{F}^*$
(resp. $\mathrm{F}^*\mathrm{F}$) and the corresponding identity $1$-morphisms which make
the pair $(\mathrm{F},\mathrm{F}^*)$ into a pair of adjoint $1$-morphisms
(cf. the notion of a {\em rigid} tensor category in \cite{EGNO}).

A {\em fiat} $2$-category is a weakly fiat $2$-category in which the weak
antiautoequivalence is a weak involution.

\subsection{Decategorification}\label{s2.6}

Let $\cC$ be a finitary $2$-category. The {\em (Grothendieck) decategorification} of $\cC$ 
is the category $[\cC]_{\oplus}$ with the same objects as $\cC$ and such that 
$[\cC]_{\oplus}(\mathtt{i},\mathtt{j}):=[\cC(\mathtt{i},\mathtt{j})]_{\oplus}$, the
split Grothendieck group of $\cC(\mathtt{i},\mathtt{j})$, with induced composition.

For a two-sided cell $\mathcal{J}$, we denote by $B^{(\mathcal{J})}$ the $\mathbb{R}$-algebra
with a basis given by isomorphism classes of indecomposable $1$-morphisms in $\mathcal{J}$
and the product induced from the composition of $1$-morphisms modulo all larger
two-sided cells. Note that $B^{(\mathcal{J})}$
is not a unital algebra in general. The algebra $B^{(\mathcal{J})}$ is {\em positively based}
in the sense that all structure constants with respect to the defining basis 
(of isomorphism classes of indecomposable $1$-morphisms in $\mathcal{J}$) are
non-negative real numbers. We denote by $B^{(\mathcal{J})}_+$ the {\em positive cone}
in $B^{(\mathcal{J})}$, that is the set of all elements which are linear combinations of basis
elements with positive real coefficients.

If $\mathbf{M}$ is a  finitary $2$-representation of $\cC$, then the 
{\em (Grothendieck) decategorification} of $\mathbf{M}$ is a representation of $[\cC]_{\oplus}$
given by the induced  action of $[\cC]_{\oplus}$ on free abelian groups $[\mathbf{M}(\mathtt{i})]_{\oplus}$,
where $\mathtt{i}\in\cC$. Note that a {\em standard} basis in $[\mathbf{M}(\mathtt{i})]_{\oplus}$
is given by the isomorphism classes of indecomposable objects in $\mathbf{M}(\mathtt{i})$.

For each $1$-morphism $\mathrm{F}\in \cC$, we will denote by $[\mathrm{F}]_{\oplus}=
[\mathrm{F}]_{\oplus}^{\mathbf{M}}$ the matrix
of the linear operator $\mathrm{F}$ in a standard basis mentioned above (note that this matrix depends
on the ordering of elements in the standard basis).

If $\mathbf{M}$ is an abelian $2$-representation of $\cC$, then the 
{\em (Grothendieck) decategorification} of $\mathbf{M}$ is a representation of $[\cC]_{\oplus}$
given by the induced  action of $[\cC]_{\oplus}$ on Grothendieck groups $[\mathbf{M}(\mathtt{i})]$
of the corresponding abelian categories, where $\mathtt{i}\in\cC$. Note that a {\em standard} 
basis in $[\mathbf{M}(\mathtt{i})]$ is given by the isomorphism classes of simple objects in 
$\mathbf{M}(\mathtt{i})$. This is well-defined provided that the functor $\mathbf{M}(\mathrm{F})$ is exact,
for any $1$-morphism $\mathrm{F}\in\cC$, in particular, this is always well-defined if $\cC$ is weakly fiat.

For each $1$-morphism $\mathrm{F}\in \cC$, we will denote by $\llbracket\mathrm{F}\rrbracket$ the matrix
of the linear operator $\mathrm{F}$ in a standard basis mentioned above (note that this matrix depends
on the ordering of elements in the standard basis).

\section{Transitive $2$-representations and the apex}\label{s2-5}

\subsection{The action preorder}\label{s3.1}

Let $\cC$ be a finitary $2$-category and $\mathbf{M}$ a finitary $2$-representation of $\cC$.
Denote by $\mathrm{Ind}(\mathbf{M})$ the set of isomorphism classes of indecomposable objects
in all $\mathbf{M}(\mathtt{i})$, where $\mathtt{i}\in\cC$. For simplicity, we identify
objects with their corresponding isomorphism classes. For $X,Y\in \mathbf{M}(\mathtt{i})$,
we write $X\boldsymbol{\to} Y$ provided that there is a $1$-morphism  $\mathrm{F}\in \cC$ such that 
$Y$ is isomorphic to a direct summand of $\mathrm{F}\, X$. The relation $\boldsymbol{\to}$ is,
clearly, a preorder on $\mathrm{Ind}(\mathbf{M})$, called the {\em action preorder}.
The $2$-representation $\mathbf{M}$ is said to be {\em transitive} provided that 
$\boldsymbol{\to}$ is the full relation. A transitive $2$-representation of $\cC$ is called 
{\em simple} provided that it does not have any proper ideal which is invariant under the
$2$-action of $\cC$.

For $X,Y\in \mathbf{M}(\mathtt{i})$, we write $X\boldsymbol{\leftrightarrow}Y$ provided that 
$X\boldsymbol{\to} Y$ and $Y\boldsymbol{\to} X$. Then $\boldsymbol{\leftrightarrow}$ is an equivalence
relation and the preorder $\boldsymbol{\to}$ induces a partial order on the set 
$\mathrm{Ind}(\mathbf{M})/_{\boldsymbol{\leftrightarrow}}$ of equivalence classes with respect to
$\boldsymbol{\leftrightarrow}$. The elements in $\mathrm{Ind}(\mathbf{M})/_{\boldsymbol{\leftrightarrow}}$
correspond to subquotients in a weak Jordan-H{\"o}lder series of $\mathbf{M}$ as defined in 
\cite[Section~4]{MM5}.

\subsection{Apex}\label{s2-5.1}

Let $\cC$ be a finitary $2$-category and
$\mathbf{M}$ a finitary $2$-representation of $\cC$. Recall that the {\em annihilator} 
$\mathrm{Ann}_{\ccC}(\mathbf{M})$ is the two-sided ideal in $\cC$ which consists of all 
$2$-morphism annihilated by $\mathbf{M}$, see \cite[Subsection~4.2]{MM1}. We can now consider the
finitary $2$-category $\cC_{\mathbf{M}}:=\cC/\mathrm{Ann}_{\ccC}(\mathbf{M})$.

\begin{lemma}\label{lem7}
Let $\cC$ be a finitary $2$-category and $\mathbf{M}$ a transitive $2$-representation of $\cC$.
Then $\mathcal{S}[\cC_{\mathbf{M}}]$ has a unique maximal two-sided cell. This two-sided cell
is idempotent.
\end{lemma}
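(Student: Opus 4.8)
The plan is to reduce the statement to a combinatorial assertion about the finite multisemigroup $\mathcal{S}[\cC_{\mathbf{M}}]$ together with its transitive action on $\mathrm{Ind}(\mathbf{M})$, and then to mirror the apex theory for transitive modules over finite semigroups from \cite{GMS}. First I would record the basic dictionary: passing from $\cC$ to $\cC_{\mathbf{M}}=\cC/\mathrm{Ann}_{\ccC}(\mathbf{M})$ kills exactly those $2$-morphisms annihilated by $\mathbf{M}$, so an indecomposable $1$-morphism $\mathrm{F}$ survives as a nonzero object of $\cC_{\mathbf{M}}$ precisely when $\mathrm{id}_{\mathrm{F}}\notin\mathrm{Ann}_{\ccC}(\mathbf{M})$, i.e. when $\mathbf{M}(\mathrm{F})\neq 0$. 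Hence every indecomposable $1$-morphism $\mathrm{F}$ of $\cC_{\mathbf{M}}$ acts nontrivially: there are an indecomposable object $X$ and an indecomposable summand $Y$ of $\mathrm{F}\,X$. Since $\mathcal{S}[\cC_{\mathbf{M}}]$ is finite, $\leq_J$ induces a finite poset on two-sided cells, which has at least one maximal element, so existence is immediate and the whole content lies in uniqueness and idempotency.

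For idempotency of a maximal cell $\mathcal{J}$ the engine is transitivity used as a \emph{return} mechanism. Choosing $\mathrm{F}\in\mathcal{J}$, an object $X$ and an indecomposable summand $Y$ of $\mathrm{F}\,X$ as above, transitivity ($Y\boldsymbol{\to} X$) supplies an indecomposable $\mathrm{G}$ with $X$ a summand of $\mathrm{G}\,Y$, hence of $(\mathrm{G}\circ\mathrm{F})\,X$. Extracting an indecomposable summand $\mathrm{H}$ of $\mathrm{G}\circ\mathrm{F}$ with $X$ a summand of $\mathrm{H}\,X$ produces a non-annihilated $\mathrm{H}$ with $\mathrm{H}\leq_L\mathrm{F}$ and $X$ a summand of $\mathrm{H}^{\,n}\,X$ for every $n\geq 1$. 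Finiteness of $\mathcal{S}[\cC_{\mathbf{M}}]$ then forces some summand of a power $\mathrm{H}^{\,n}$ to lie in a two-sided cell that is idempotent in the sense defined before the lemma; this is the semigroup fact that the subsemigroup generated by any element contains an idempotent, transported to the multisemigroup setting. Matching this idempotent cell with $\mathcal{J}$ itself is then tied to the uniqueness step.

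The hard part is uniqueness, and the essential obstruction is structural: every indecomposable summand of a composition $\mathrm{F}\circ\mathrm{G}$ is $\leq_J\mathrm{F}$ and $\leq_J\mathrm{G}$, so composing $1$-morphisms can only move \emph{downwards} in the two-sided order, and there is no formal way to ascend towards a prospective maximum. Transitivity is precisely what defeats this. Given two maximal cells $\mathcal{J}_1\ni\mathrm{F}_1$ and $\mathcal{J}_2\ni\mathrm{F}_2$, I would fix $X$ with $\mathrm{F}_1\,X\neq 0$, use transitivity to route from a summand of $\mathrm{F}_1\,X$ back to an object on which $\mathrm{F}_2$ acts nontrivially, and thereby manufacture a non-annihilated indecomposable summand $\mathrm{K}$ of a word $\mathrm{F}_2\circ\mathrm{G}\circ\mathrm{F}_1$ with $\mathrm{K}\leq_L\mathrm{F}_1$ and $\mathrm{K}\leq_R\mathrm{F}_2$. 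The remaining—and genuinely delicate—task is to upgrade these one-sided inequalities to $\mathrm{K}\sim_J\mathrm{F}_1\sim_J\mathrm{F}_2$, forcing $\mathcal{J}_1=\mathcal{J}_2$.

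For this upgrade I would mirror the apex argument of \cite{GMS}, combining the return mechanism with finiteness to show that any two $\leq_J$-maximal non-annihilated cells are each comparable, through a connecting word furnished by transitivity, to a common idempotent cell, and then using idempotency to close the loop and force both maxima to coincide with it. If a more quantitative handle is required, positivity is available: transitivity makes the transition operator $\sum_{\mathrm{F}}[\mathrm{F}]_{\oplus}^{\mathbf{M}}$ an irreducible non-negative integer matrix on $\bigoplus_{\mathtt{i}}[\mathbf{M}(\mathtt{i})]_{\oplus}$, and the strictly positive Perron eigenvectors, together with the positive cone $B^{(\mathcal{J})}_{+}$, detect precisely the top layer. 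I expect the delicate point to be exactly the passage from $\mathrm{K}\leq_L\mathrm{F}_1$, $\mathrm{K}\leq_R\mathrm{F}_2$ to two-sided equality, since nothing in the composition of $1$-morphisms ever raises the $\leq_J$-order, so transitivity must carry the entire weight of the argument.
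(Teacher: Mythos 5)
Your proposal founders on the direction of the cell preorders, and in this paper that is not a harmless notational choice. With the conventions adopted here (following \cite{MM2}, see Subsection~\ref{s2.3}), $\mathrm{G}\leq_L\mathrm{H}$ means that $\mathrm{H}$ is a direct summand of $\mathrm{K}\circ\mathrm{G}$ for some $\mathrm{K}$; consequently an indecomposable direct summand $\mathrm{H}$ of $\mathrm{F}\circ\mathrm{G}$ satisfies $\mathrm{H}\geq_L\mathrm{G}$ and $\mathrm{H}\geq_R\mathrm{F}$, so composition moves you \emph{up} the two-sided order, not down. Your assertion that ``every indecomposable summand of a composition $\mathrm{F}\circ\mathrm{G}$ is $\leq_J\mathrm{F}$ and $\leq_J\mathrm{G}$'' is exactly backwards. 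This inversion is not self-consistent with the lemma you are proving: under your convention every $1$-morphism would be $\leq_J$ an identity $1$-morphism (since $\mathrm{F}$ is a summand of the composition of $\mathrm{F}$ with an identity), so for a one-object $\cC$ the identity's cell would trivially be the unique maximal cell, rendering the notion of apex vacuous and contradicting its use throughout the paper (in Example~\ref{ex7-1} and Theorem~\ref{thm32} the apex is the \emph{non}-identity cell). The paper's own arguments, e.g.\ the proof of Lemma~\ref{lem801} (``each indecomposable direct summand of $\mathrm{G}\circ\mathrm{H}$ is in $\mathcal{J}$'' for $\mathrm{G}$ in the maximal cell $\mathcal{J}$) and the block-matrix step in Theorem~\ref{thm6}, all depend on the correct direction.

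Because of this inversion, the ``essential obstruction'' you describe is a phantom, and your proof is genuinely incomplete at exactly the two points you defer. Uniqueness: your surviving summand $\mathrm{K}$ of $\mathrm{F}_2\circ\mathrm{G}\circ\mathrm{F}_1$ in fact satisfies $\mathrm{K}\geq_J\mathcal{J}_1$ and $\mathrm{K}\geq_J\mathcal{J}_2$, so maximality of both cells forces the cell of $\mathrm{K}$ to equal both, giving $\mathcal{J}_1=\mathcal{J}_2$ on the spot; no ``upgrade'' from one-sided inequalities, no mirroring of \cite{GMS}, and no Perron--Frobenius input is needed, and as written none of these deferred steps is actually carried out. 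Idempotency: your return mechanism produces a surviving indecomposable summand $\mathrm{H}$ of $\mathrm{G}\circ\mathrm{F}$ with $X$ a summand of $\mathrm{H}\,X$; then $\mathrm{H}\geq_L\mathrm{F}$ places $\mathrm{H}$ in $\mathcal{J}$ by maximality, and since $X$ is a summand of $(\mathrm{H}\circ\mathrm{H})\,X$, the composition $\mathrm{H}\circ\mathrm{H}$ is nonzero in $\cC_{\mathbf{M}}$ and its surviving summands again lie in $\mathcal{J}$, proving idempotency of $\mathcal{J}$ itself --- whereas your finiteness argument only yields an idempotent cell somewhere, which you concede you cannot match with $\mathcal{J}$. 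For comparison, the paper argues globally rather than pointwise: taking $\mathrm{F}$, $\mathrm{G}$ to be multiplicity-free sums over the two putative maximal cells, maximality forces $\mathrm{F}\circ\mathrm{G}=0$ in $\cC_{\mathbf{M}}$, while transitivity gives $\mathrm{add}(\mathrm{F}\,X)=\mathrm{add}(\mathrm{G}\,X)=\mathrm{add}(X)$ for an additive generator $X$, whence $\mathrm{add}((\mathrm{F}\circ\mathrm{G})\,X)=\mathrm{add}(X)\neq 0$, a contradiction; idempotency then falls out of $\mathrm{add}((\mathrm{F}\circ\mathrm{F})\,X)=\mathrm{add}(X)$. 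Your pointwise ``return mechanism'' is a perfectly good substitute for that step, but only after the order convention is corrected.
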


Note that two-sided cells of $\mathcal{S}[\cC_{\mathbf{M}}]$ are also two-sided cells of 
$\mathcal{S}[\cC]$.  At the same time, some two-sided cells of $\cC$ can disappear in $\cC_{\mathbf{M}}$. 
All the latter two-sided cells form an upper set in the poset $(\mathcal{S}[\cC],\leq_J)$.
The unique two-sided cell which corresponds to $\mathbf{M}$ by Lemma~\ref{lem7} is called the
{\em apex} of  $\mathbf{M}$, cf. \cite[Definition~4]{GMS} for the corresponding notion in
representation theory of semigroups. For origins of this notion, see \cite{Mu}.

\begin{proof}
For simplicity, we assume that $\cC$ has only one object $\mathtt{i}$, the general case is
proved similarly.

Assume that $\mathcal{J}_1$ and $\mathcal{J}_2$ are two different maximal two-sided cells 
in $\mathcal{S}[\cC_{\mathbf{M}}]$.  Let $\mathrm{F}$ (resp. $\mathrm{G}$) be a multiplicity-free 
direct sum of representatives of  isomorphism classes of indecomposable $1$-morphisms in 
$\mathcal{J}_1$ (resp. $\mathcal{J}_2$). Then, by maximality of $\mathcal{J}_1$ and $\mathcal{J}_2$, 
we have $\mathrm{F}\circ\mathrm{G}=0$ and $\mathrm{G}\circ\mathrm{F}=0$. Let $X$ be a multiplicity-free
direct sum of representatives of isomorphism classes of indecomposable objects in all
$\mathbf{M}(\mathtt{i})$. Then $X$ is an additive generator of $\mathbf{M}(\mathtt{i})$.

Note that $\mathrm{F}\, X\neq 0$ by assumptions. As $\mathcal{J}_1$ is maximal and $\mathbf{M}$ is transitive,
we get $\mathrm{add}(\mathrm{F}\, X)=\mathrm{add}(X)$. Similarly $\mathrm{add}(\mathrm{G}\, X)=\mathrm{add}(X)$.
Therefore we have the equality $\mathrm{add}(\mathrm{F}\circ \mathrm{G}\, X)=\mathrm{add}(X)$, which contradicts
$\mathrm{F}\circ\mathrm{G}=0$ established in the previous paragraph. The first claim of the lemma follows.

From $\mathrm{add}(\mathrm{F}\, X)=\mathrm{add}(X)$, we have 
$\mathrm{add}((\mathrm{F}\circ \mathrm{F})\, X)=\mathrm{add}(\mathrm{F}\, X)=\mathrm{add}(X)$
and hence $\mathrm{F}\circ \mathrm{F}\neq 0$, which implies that 
the above unique maximal two-sided cell is idempotent.
\end{proof}

\begin{example}\label{ex7-1}
{\rm  
Let $\cC$ be a finitary $2$-category and $\mathcal{J}$ an idempotent two-sided cell for $\cC$.
Let $\mathcal{L}$ in $\mathcal{J}$ be a left cell which is maximal (inside $\mathcal{J}$) with
respect to $\leq_L$. Let $\mathbf{C}_{\mathcal{L}}$ be the corresponding
cell $2$-representation as in \cite[Subsection~6.5]{MM2}. This $2$-representation is transitive
and we claim that $\mathcal{J}$ is its apex. To see this, we need to show that 
$\mathcal{J}$ does not annihilate $\mathbf{C}_{\mathcal{L}}$. Without loss of generality we may 
assume that $\mathcal{J}$ is the unique maximal two-sided cell. 
Then, by maximality of $\mathcal{L}$ with respect to $\leq_L$, we just need to show that 
$\mathcal{J}\circ \mathcal{L}\neq 0$. If
$\mathcal{J}\circ \mathcal{L}=0$, then $\mathcal{J}\circ \mathcal{L}\circ \mathcal{S}[\cC]=0$. However,
$\mathcal{L}\circ \mathcal{S}[\cC]$ is a non-zero two-sided ideal containing $\mathcal{L}$ and therefore
also containing $\mathcal{J}$ by the maximality of the latter with respect to $\leq_J$. 
As $\mathcal{J}$ is idempotent, we have $\mathcal{J}\circ \mathcal{J}\neq 0$, which gives a contradiction.
}
\end{example}

The following proposition relates the apex of a transitive $2$-representation $\mathbf{M}$  to the apex of 
the simple transitive quotient of $\mathbf{M}$.

\begin{proposition}\label{prop23}
Let $\mathbf{M}$ be a transitive $2$-representations of $\cC$ and $\mathbf{N}$ be the simple 
transitive quotient of $\mathbf{M}$. Then $\mathbf{M}$ and $\mathbf{N}$ have the same apex.
\end{proposition}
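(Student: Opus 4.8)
The statement asserts that passing from a transitive $2$-representation $\mathbf{M}$ to its simple transitive quotient $\mathbf{N}$ does not change the apex. My plan is to compare the two annihilators $\mathrm{Ann}_{\ccC}(\mathbf{M})$ and $\mathrm{Ann}_{\ccC}(\mathbf{N})$, and hence the maximal two-sided cells of $\mathcal{S}[\cC_{\mathbf{M}}]$ and $\mathcal{S}[\cC_{\mathbf{N}}]$ supplied by Lemma~\ref{lem7}. Since $\mathbf{N}$ is a subquotient of $\mathbf{M}$ (it is obtained by quotienting out the maximal proper $\cC$-invariant ideal), every $2$-morphism killed by $\mathbf{M}$ is also killed by $\mathbf{N}$, so one inclusion of annihilators is automatic; the content is in controlling the other direction at the level of the top cell.

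First I would reduce, as in Lemma~\ref{lem7}, to the case where $\cC$ has a single object $\mathtt{i}$. Let $\mathcal{J}$ be the apex of $\mathbf{M}$, so $\mathcal{J}$ is the unique maximal two-sided cell of $\mathcal{S}[\cC_{\mathbf{M}}]$, and let $\mathcal{J}'$ be the apex of $\mathbf{N}$. The inclusion $\mathrm{Ann}_{\ccC}(\mathbf{M})\subseteq\mathrm{Ann}_{\ccC}(\mathbf{N})$ shows that $\mathcal{J}'\leq_J\mathcal{J}$, since any two-sided cell surviving in $\cC_{\mathbf{N}}$ also survives in $\cC_{\mathbf{M}}$ and is thus dominated by the maximal cell $\mathcal{J}$. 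So the real task is to prove $\mathcal{J}\leq_J\mathcal{J}'$, equivalently that $\mathcal{J}$ does not act as zero on $\mathbf{N}$.

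The key step is to show that $\mathrm{F}(\mathcal{J})$ acts nontrivially on $\mathbf{N}$. Recall that the classes in $\mathrm{Ind}(\mathbf{M})/_{\boldsymbol{\leftrightarrow}}$ correspond to subquotients of a weak Jordan--H\"older series, and $\mathbf{N}$ is the simple transitive quotient, built on the top such class. Since $\mathcal{J}$ is the apex of the transitive $\mathbf{M}$, for the additive generator $X$ of $\mathbf{M}(\mathtt{i})$ we have $\mathrm{add}(\mathrm{F}(\mathcal{J})\,X)=\mathrm{add}(X)$ as in the proof of Lemma~\ref{lem7}. I would argue that applying $\mathrm{F}(\mathcal{J})$ to an indecomposable object $Y$ lying in the top $\boldsymbol{\leftrightarrow}$-class yields a summand again in that top class: indeed $Y\boldsymbol{\to}\mathrm{F}(\mathcal{J})\,Y$, and since $Y$ is maximal in the action preorder no summand of $\mathrm{F}(\mathcal{J})\,Y$ can lie strictly below $Y$, while $\mathrm{F}(\mathcal{J})\,Y\neq 0$ because $\mathrm{F}(\mathcal{J})$ moves within the top class (transitivity of $\mathbf{M}$ restricted to that class). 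Passing to the quotient $\mathbf{N}$, whose objects are exactly the images of this top class, $\mathrm{F}(\mathcal{J})$ therefore acts by a nonzero functor. Hence $\mathcal{J}$ survives in $\cC_{\mathbf{N}}$, giving $\mathcal{J}\leq_J\mathcal{J}'$, and combined with the reverse inequality we conclude $\mathcal{J}=\mathcal{J}'$.

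The main obstacle I anticipate is the passage to the quotient $\mathbf{N}$: one must verify that nonvanishing of $\mathrm{F}(\mathcal{J})\,Y$ in $\mathbf{M}$ is not destroyed when we quotient by the maximal $\cC$-invariant ideal. This requires knowing that the summand of $\mathrm{F}(\mathcal{J})\,Y$ lying in the top $\boldsymbol{\leftrightarrow}$-class is genuinely present and does not get identified to zero in $\mathbf{N}$ — precisely because $\mathbf{N}$ is the quotient \emph{onto} that top class rather than a quotient killing it. Making this bookkeeping precise, using the identification of $\mathrm{Ind}(\mathbf{N})$ with the top $\boldsymbol{\leftrightarrow}$-class from \cite[Section~4]{MM5}, is where I expect the argument to need the most care.
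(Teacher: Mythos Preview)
Your overall strategy---compare which $1$-morphisms act nontrivially on $\mathbf{M}$ versus on $\mathbf{N}$---is correct and matches the paper's. However, much of your middle paragraph is built on a confusion: since $\mathbf{M}$ is \emph{transitive}, there is exactly one $\boldsymbol{\leftrightarrow}$-class in $\mathrm{Ind}(\mathbf{M})$, so talk of a ``top class'' and of summands ``lying strictly below $Y$'' is vacuous. The simple transitive quotient $\mathbf{N}$ is not obtained by restricting to a top layer of objects; it is the quotient of $\mathbf{M}$ by the maximal proper $\cC$-invariant ideal of \emph{morphisms}. In particular, $\mathrm{Ind}(\mathbf{N})=\mathrm{Ind}(\mathbf{M})$: no indecomposable object becomes zero, because a proper ideal in a Krull--Schmidt category contains no identity morphism of an indecomposable. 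Once you internalize this, the ``obstacle'' you flag in your last paragraph disappears immediately.

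The paper's proof exploits exactly this observation and is considerably shorter. It bypasses annihilators and argues directly that $\mathbf{M}(\mathrm{F})\neq 0\iff\mathbf{N}(\mathrm{F})\neq 0$ for every $1$-morphism $\mathrm{F}$. One direction is trivial. For the other, if $\mathbf{M}(\mathrm{F})\,X$ has an indecomposable summand $Y$, then the image of $Y$ in $\mathbf{N}(\mathtt{i})$ is still a nonzero indecomposable (by the remark above) and is a summand of $\mathbf{N}(\mathrm{F})$ applied to the image of $X$ (split idempotents pass to quotients). Hence $\mathbf{N}(\mathrm{F})\neq 0$. Your annihilator framing reaches the same conclusion, but the detour through $\boldsymbol{\leftrightarrow}$-classes adds no content here and obscures the one-line reason why nonvanishing survives the quotient.
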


\begin{proof}
If $\mathbf{N}(\mathrm{F})\neq 0$, then, clearly, $\mathbf{M}(\mathrm{F})\neq 0$.
Conversely, assume that $\mathbf{M}(\mathrm{F})\neq 0$.
Then there is an indecomposable object $X$ in some $\mathbf{M}(\mathtt{i})$ such that 
$\mathbf{M}(\mathrm{F})\, X\neq 0$. Let $Y$ be an indecomposable direct summand of $\mathbf{M}(\mathrm{F})\, X$.
Then, by construction, the image of $Y$ in $\mathbf{N}(\mathtt{i})$ is non-zero and hence is  an
indecomposable direct summand of $\mathbf{N}(\mathrm{F})\, X$. Therefore $\mathbf{N}(\mathrm{F})\neq 0$
and the claim follows.
\end{proof}

\subsection{Properties of the apex}\label{s2-5.2}

\begin{corollary}\label{cor8}
Let $\cC$ be a finitary $2$-category and $\mathbf{M}$ a transitive $2$-representation of $\cC$
with apex $\mathcal{J}$. Then, for any left cell $\mathcal{L}$ in $\mathcal{J}$ 
maximal (inside $\mathcal{J}$) with respect to $\leq_L$  and any $\mathtt{j}\in\cC$,
applying 
\begin{displaymath}
\bigoplus_{\mathrm{F}\in\mathcal{L}\cap \ccC(\mathtt{i}_{\mathcal{L}},\mathtt{j})}\mathbf{M}(\mathrm{F})
\end{displaymath}
to an additive generator of $\mathbf{M}(\mathtt{i}_{\mathcal{L}})$ 
gives an additive generator in  $\mathbf{M}(\mathtt{j})$.
\end{corollary}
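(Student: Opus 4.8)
The plan is to reduce to the setting of Lemma~\ref{lem7}, to show that every indecomposable object of $\mathbf{M}(\mathtt{j})$ is already reachable from $\mathbf{M}(\mathtt{i}_{\mathcal{L}})$ by a single $1$-morphism lying \emph{inside} the apex $\mathcal{J}$, and then to trade such an apex $1$-morphism for one in $\mathcal{L}$ via the internal cell combinatorics of $\mathcal{J}$. Concretely, I first replace $\cC$ by $\cC_{\mathbf{M}}$; this alters neither $\mathbf{M}$ nor the cells $\mathcal{L},\mathcal{J}$, and by Lemma~\ref{lem7} it makes $\mathcal{J}$ the unique maximal two-sided cell of $\mathcal{S}[\cC]$, which is moreover idempotent. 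Write $\mathtt{i}=\mathtt{i}_{\mathcal{L}}$, let $X$ be an additive generator of $\mathbf{M}(\mathtt{i})$, and put $\mathcal{B}_{\mathtt{j}}:=\mathrm{add}\big(\bigoplus_{\mathrm{F}\in\mathcal{L}\cap\cC(\mathtt{i},\mathtt{j})}\mathbf{M}(\mathrm{F})\,X\big)$. The assertion is precisely that $\mathcal{B}_{\mathtt{j}}=\mathbf{M}(\mathtt{j})$ for all $\mathtt{j}$.

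The crucial intermediate step, which I would prove by copying the argument of Lemma~\ref{lem7}, is that every indecomposable $Z\in\mathbf{M}(\mathtt{j})$ occurs as a summand of $\mathbf{M}(\mathrm{Q})\,X$ for some indecomposable $\mathrm{Q}\in\mathcal{J}\cap\cC(\mathtt{i},\mathtt{j})$. To see this, consider the set $\mathcal{T}$ of indecomposables reachable from $X$ through $\mathcal{J}$; it is nonempty because $\mathcal{L}\subseteq\mathcal{J}$ starts at $\mathtt{i}$ and does not annihilate $\mathbf{M}$. If $V\in\mathcal{T}$, say $V\mid\mathbf{M}(\mathrm{R})\,X$ with $\mathrm{R}\in\mathcal{J}$, and $W\mid\mathbf{M}(\mathrm{H})\,V$ for some $\mathrm{H}$, then $W$ is a summand of $\mathbf{M}(\mathrm{H}\circ\mathrm{R})\,X$; since $\mathcal{J}$ is the unique maximal two-sided cell, every indecomposable summand of $\mathrm{H}\circ\mathrm{R}$ is either annihilated or again lies in $\mathcal{J}$, whence $W\in\mathcal{T}$. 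Transitivity of $\mathbf{M}$ now forces $\mathcal{T}$ to be all of $\mathrm{Ind}(\mathbf{M})$, giving the claim.

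It remains to replace $\mathrm{Q}\in\mathcal{J}$ by an element of $\mathcal{L}$. For this I would invoke the Green-type (egg-box) structure of the two-sided cell $\mathcal{J}$: the left cell $\mathcal{L}$ meets the right cell of $\mathrm{Q}$, producing an indecomposable $\mathrm{F}\in\mathcal{L}$ with $\mathrm{F}\sim_R\mathrm{Q}$. As right cells share their target object, $\mathrm{F}\in\mathcal{L}\cap\cC(\mathtt{i},\mathtt{j})$, and from $\mathrm{F}\leq_R\mathrm{Q}$ one gets $\mathrm{Q}\mid\mathrm{F}\circ\mathrm{G}$ for some $\mathrm{G}\in\cC(\mathtt{i},\mathtt{i})$. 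Since $\mathbf{M}(\mathrm{G})\,X\in\mathbf{M}(\mathtt{i})=\mathrm{add}(X)$ and $\mathbf{M}(\mathrm{F})$ is additive, we conclude $Z\mid\mathbf{M}(\mathrm{Q})\,X\mid\mathbf{M}(\mathrm{F})\,\mathbf{M}(\mathrm{G})\,X\in\mathrm{add}\big(\mathbf{M}(\mathrm{F})\,X\big)\subseteq\mathcal{B}_{\mathtt{j}}$. As $Z$ was an arbitrary indecomposable of $\mathbf{M}(\mathtt{j})$, this proves $\mathcal{B}_{\mathtt{j}}=\mathbf{M}(\mathtt{j})$.

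The step I expect to be the main obstacle is the intermediate claim of the second paragraph, namely forcing the reaching $1$-morphism to lie in the apex $\mathcal{J}$ rather than in some smaller surviving cell. Transitivity by itself only yields \emph{some} reaching $1$-morphism, and it is the maximality of $\mathcal{J}$ (ensuring that composition cannot escape the apex) together with its idempotency that confines everything to $\mathcal{J}$; the subsequent exchange of $\mathrm{Q}$ for $\mathrm{F}\in\mathcal{L}$ is then a purely combinatorial matching inside $\mathcal{J}$, for which idempotency provides the requisite regular structure.
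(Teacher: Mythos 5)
Your reduction to $\cC_{\mathbf{M}}$ and your second paragraph are correct: adapting the argument of Lemma~\ref{lem7}, every indecomposable $Z\in\mathbf{M}(\mathtt{j})$ is indeed a direct summand of $\mathbf{M}(\mathrm{Q})\,X$ for some indecomposable $\mathrm{Q}\in\mathcal{J}\cap\cC(\mathtt{i},\mathtt{j})$. The genuine gap is in your third paragraph, and it is not where you locate the main difficulty. You invoke an ``egg-box'' property of $\mathcal{J}$ --- that the left cell $\mathcal{L}$ must meet the right cell of $\mathrm{Q}$ --- to produce $\mathrm{F}\in\mathcal{L}$ with $\mathrm{F}\sim_R\mathrm{Q}$. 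For finite semigroups this is the theorem $\mathcal{J}=\mathcal{D}$, whose proof rests on stability, which in turn uses single-valued multiplication and idempotent powers of elements; none of this transfers to the multisemigroup $\mathcal{S}[\cC]$ of a finitary $2$-category, where composition is multi-valued. No such statement is proved in this paper or in its references, and idempotency of $\mathcal{J}$ in the paper's sense (there exist $\mathrm{F},\mathrm{G},\mathrm{H}\in\mathcal{J}$ with $\mathrm{H}$ a summand of $\mathrm{F}\circ\mathrm{G}$) is far too weak to yield such a Green-type structure. A telltale sign that something is off: your argument never uses the hypothesis that $\mathcal{L}$ is maximal in $\mathcal{J}$ with respect to $\leq_L$, so if it were complete it would prove the corollary for \emph{every} left cell of the apex, a substantially stronger statement than the one being claimed.

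The repair uses exactly the hypothesis you left untouched, and it makes both your intermediate claim and the egg-box step unnecessary; this is the paper's proof. For $\mathrm{F}\in\mathcal{L}$ and any $1$-morphism $\mathrm{H}$, every indecomposable summand $S$ of $\mathrm{H}\circ\mathrm{F}$ satisfies $S\geq_L\mathrm{F}$; hence either $S\in\mathcal{J}$, in which case maximality of $\mathcal{L}$ inside $\mathcal{J}$ forces $S\in\mathcal{L}$, or the two-sided cell of $S$ is strictly greater than $\mathcal{J}$, in which case $\mathbf{M}(S)=0$ because $\mathcal{J}$ is the apex. Consequently, the additive closure of all objects $\mathbf{M}(\mathrm{F})\,Y$ with $\mathrm{F}\in\mathcal{L}$ and $Y\in\mathbf{M}(\mathtt{i}_{\mathcal{L}})$ is stable under the $2$-action of $\cC$ (the additive closure of $\mathcal{L}$ is a left ideal modulo the annihilator of $\mathbf{M}$); it is nonzero since $1$-morphisms in the apex are not annihilated, so transitivity of $\mathbf{M}$ forces this subrepresentation to be all of $\mathbf{M}$, which is precisely the assertion of the corollary. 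If you prefer to keep your two-step structure, the same observation replaces the egg-box step: since $\mathrm{F}$ and $\mathrm{Q}$ lie in the same two-sided cell, $\mathrm{Q}$ is a summand of $\mathrm{H}_1\circ\mathrm{F}\circ\mathrm{H}_2$ for some $\mathrm{H}_1,\mathrm{H}_2$; here $\mathrm{H}_2\in\cC(\mathtt{i},\mathtt{i})$, so $\mathbf{M}(\mathrm{H}_2)\,X\in\mathrm{add}(X)$, and all summands of $\mathrm{H}_1\circ\mathrm{F}$ that act nonzero lie in $\mathcal{L}$ by the dichotomy above, whence $Z\in\mathcal{B}_{\mathtt{j}}$.
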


\begin{proof}
Because of maximality of both $\mathcal{J}$ and $\mathcal{L}$, the additive closure of all
$1$-morphisms in $\mathcal{L}$ gives a left ideal in $\cC$ and hence the additive
closure of all objects obtained by applying all $1$-morphisms in $\mathcal{L}$ gives
a $2$-subrepresentation of $\mathbf{M}$. Note that $\mathbf{M}(\mathcal{L})$
is non-zero by our assumptions. Since $\mathbf{M}$ is transitive, the
$2$-subrepresentation must therefore coincide with $\mathbf{M}$. The claim follows.
\end{proof}

\begin{lemma}\label{lem801}
Let $\cC$ be a finitary $2$-category, $\mathbf{M}$ a transitive $2$-representation of $\cC$
and $\mathcal{J}$ the apex of $\mathbf{M}$. Let, further,  $\mathrm{F}$ be a direct sum of
indecomposable $1$-morphisms in $\mathcal{J}$ in which each indecomposable $1$-morphisms from $\mathcal{J}$
occurs with positive multiplicity. Then all coefficients in the matrix $[\mathrm{F}]_{\oplus}$
are positive.
\end{lemma}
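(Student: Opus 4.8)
The plan is to show that, for the $1$-morphism $\mathrm{F}$ in question, the matrix $[\mathrm{F}]_\oplus$ has no zero entries, which amounts to proving that for any two indecomposable objects $X$ and $Y$ of $\mathbf{M}$, the object $Y$ occurs as a direct summand of $\mathrm{F}\,X$ with positive multiplicity. Since $[\mathrm{F}]_\oplus$ records exactly these multiplicities (its entries being non-negative integers), the assertion that all coefficients are positive is equivalent to the statement that $Y$ is isomorphic to a direct summand of $\mathrm{F}\,X$ for every pair $(X,Y)$. The key structural input is that $\mathrm{F}$ is a direct sum of \emph{all} indecomposable $1$-morphisms in the apex $\mathcal{J}$ (each with positive multiplicity), so $\mathrm{add}(\mathrm{F}\,X)$ coincides with $\mathrm{add}(\mathrm{F}(\mathcal{J})\,X)$, the additive closure obtained by applying the whole apex to $X$.

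First I would fix an indecomposable $X$ in some $\mathbf{M}(\mathtt{i})$ and analyse $\mathrm{add}(\mathrm{F}\,X)$. Because $\mathcal{J}$ is the apex, it does not annihilate $\mathbf{M}$, so $\mathrm{F}\,X$ cannot be zero for \emph{some} choice of $X$; the real work is to upgrade this to every $X$ and to full additive generation. The natural tool is Corollary~\ref{cor8}: choosing a left cell $\mathcal{L}$ in $\mathcal{J}$ that is maximal with respect to $\leq_L$, applying the sum of the $\mathbf{M}(\mathrm{G})$ over $\mathrm{G}\in\mathcal{L}$ to an additive generator of $\mathbf{M}(\mathtt{i}_\mathcal{L})$ already yields an additive generator of each $\mathbf{M}(\mathtt{j})$. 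Since the $1$-morphisms of $\mathcal{L}$ are among the summands of $\mathrm{F}$, this tells us that $\mathrm{F}$ applied to the relevant objects is ``surjective enough'' to reach every indecomposable $Y$. The transitivity of $\mathbf{M}$ then lets me move from $X$ to the additive generator of $\mathbf{M}(\mathtt{i}_\mathcal{L})$: there is some $\mathrm{H}\in\mathcal{S}[\cC]$ with the generator a summand of $\mathrm{H}\,X$, and composing, $Y$ will appear inside $\mathrm{F}\circ\mathrm{H}\,X$.

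The bridge from ``$Y$ occurs in $\mathrm{F}\circ\mathrm{H}\,X$'' back to ``$Y$ occurs in $\mathrm{F}\,X$'' is where the idempotency of $\mathcal{J}$ and its maximality must be exploited, and this is the step I expect to be the main obstacle. The point is that one cannot subtract functors, so I cannot simply cancel $\mathrm{H}$; instead I would argue that, modulo the annihilator, composition with $\mathrm{H}$ can only introduce summands lying in $\mathcal{J}$ (by maximality of $\mathcal{J}$, any $1$-morphism composed into or out of $\mathcal{J}$ either lands back in $\mathcal{J}$ or is killed), so that $\mathrm{add}(\mathrm{F}\circ\mathrm{H}\,X)\subseteq\mathrm{add}(\mathrm{F}\,X)$. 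Concretely, each indecomposable summand of $\mathrm{F}\circ\mathrm{H}$ that acts non-trivially on $\mathbf{M}$ must belong to $\mathcal{J}$, hence to $\mathrm{add}(\mathrm{F})$, giving the desired inclusion of additive closures. Combined with the previous paragraph this shows $Y\in\mathrm{add}(\mathrm{F}\,X)$ for all $X,Y$.

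Finally I would assemble these pieces: for an arbitrary ordered pair of indecomposables the corresponding matrix entry of $[\mathrm{F}]_\oplus$ equals the multiplicity of $Y$ in $\mathrm{F}\,X$, which we have shown to be at least one, and since all entries of $[\mathrm{F}]_\oplus$ are automatically non-negative integers (being direct-summand multiplicities), every entry is a positive integer. This yields the claim. The two delicate points to handle carefully are (i) ensuring the additive-closure inclusion $\mathrm{add}(\mathrm{F}\circ\mathrm{H}\,X)\subseteq\mathrm{add}(\mathrm{F}\,X)$ genuinely follows from maximality and idempotency of $\mathcal{J}$ rather than being assumed, and (ii) making sure the reduction to a single maximal left cell via Corollary~\ref{cor8} is compatible with the object $\mathtt{i}$ at which $X$ lives, which may require first transporting $X$ to $\mathbf{M}(\mathtt{i}_\mathcal{L})$ using transitivity before applying the corollary.
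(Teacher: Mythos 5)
Your proposal is correct, but it is organized around a different key lemma than the paper's proof. The paper argues column by column: it notes that $\mathrm{add}(\mathrm{F}\,X)$ is invariant under the action of $\cC$ (the same maximality-of-the-apex observation you call the bridge, applied to compositions $\mathrm{H}\circ\mathrm{F}$ rather than $\mathrm{F}\circ\mathrm{H}$), so by transitivity this additive closure is either zero or all of $\mathbf{M}$; in the latter case the column of $[\mathrm{F}]_{\oplus}$ at $X$ is positive, and the zero case is excluded by contradiction: if $\mathrm{F}\,X=0$ then, working modulo $\mathrm{Ann}_{\ccC}(\mathbf{M})$, every $\mathrm{G}\circ\mathrm{H}$ with $\mathrm{G}\in\mathcal{J}$ kills $X$, whereas transitivity together with $\mathbf{M}(\mathrm{F})\neq 0$ forces $\mathrm{F}\circ\mathrm{H}\,X\neq 0$ for some $\mathrm{H}$. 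You instead argue entry by entry, taking Corollary~\ref{cor8} as the main input: the maximal left cell $\mathcal{L}$ applied to an additive generator of $\mathbf{M}(\mathtt{i}_{\mathcal{L}})$ generates every $\mathbf{M}(\mathtt{j})$, transitivity transports $X$ up to that generator, and your inclusion $\mathrm{add}(\mathrm{F}\circ\mathrm{H}\,X)\subseteq\mathrm{add}(\mathrm{F}\,X)$ pulls $Y$ back into $\mathrm{add}(\mathrm{F}\,X)$. This removes the need for any argument by contradiction, since the non-vanishing input (that $\mathcal{J}$ does not annihilate $\mathbf{M}$) is already packaged inside Corollary~\ref{cor8}; on the other hand, the paper's argument is self-contained. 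Both routes ultimately run on the same engine: after passing to $\cC/\mathrm{Ann}_{\ccC}(\mathbf{M})$, any indecomposable summand of a composition involving a $1$-morphism of $\mathcal{J}$ which acts nonzero on $\mathbf{M}$ lies again in $\mathcal{J}$, hence is a summand of $\mathrm{F}$. One small correction to your flagged worry (i): idempotency of $\mathcal{J}$ plays no role here, neither in your bridge nor in the paper's proof; maximality of the apex in $\cC/\mathrm{Ann}_{\ccC}(\mathbf{M})$ is all that is used. Your worry (ii) is resolved exactly as you suggest, by transporting $X$ to $\mathbf{M}(\mathtt{i}_{\mathcal{L}})$ via transitivity before applying the corollary.
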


\begin{proof}
Let $X$ be an indecomposable object in some $\mathbf{M}(\mathtt{i})$. Then the additive closure of 
$\mathrm{F}\, X$ is, clearly, invariant under the action of $\cC$. Because of the transitivity of 
$\mathbf{M}$, this additive closure either covers all $\mathbf{M}(\mathtt{j})$, where  
$\mathtt{j}\in\cC$, or this additive closure is zero. In the first case the column in 
$[\mathrm{F}]_{\oplus}$ corresponding to $X$ obviously has only positive entries. We claim the the second 
case does not occur.

To prove that the second case does not occur, without loss of generality we may assume that 
$\mathcal{J}$ is the unique maximal two-sided cell for $\cC$. Suppose that $\mathrm{F}\, X=0$.
For any $\mathrm{G}\in \mathcal{J}$ and $\mathrm{H}$ in $\cC$, each indecomposable direct 
summand of $\mathrm{G}\circ \mathrm{H}$ is in $\mathcal{J}$. Therefore $\mathrm{G}\circ \mathrm{H}\,X=0$.
On the other hand, the additive closure of $\mathrm{H}\,X$, where $\mathrm{H}$ in $\cC$, 
covers all $\mathbf{M}(\mathtt{j})$ by transitivity of $\mathbf{M}$. As 
$\mathbf{M}(\mathrm{F})\neq 0$, we thus get that the additive closure of
$\mathrm{F}\circ \mathrm{H}\,X$, where $\mathrm{H}$ in $\cC$, cannot be zero.
The obtained contradiction completes the proof.
\end{proof}

\subsection{Simple transitive $2$-representations and $\mathcal{J}$-simple quotients}\label{s2-5.3}

Let $\cC$ be a finitary $2$-category and $\mathcal{J}$ be a two-sided cell in $\cC$. Recall from 
\cite[Subsection~6.2]{MM2} that $\cC$ is called {\em $\mathcal{J}$-simple} provided that any non-zero
two-sided $2$-ideal in $\cC$ contains the identity $2$-morphisms for all $1$-morphisms in $\mathcal{J}$.
In other terms, there is a maximum element, denoted  $\cI_{\mathcal{J}}$, in the set of all ideals in $\cC$
which do not contain the identity $2$-morphisms for $1$-morphisms in $\mathcal{J}$; the $2$-category 
$\cC$ is $\mathcal{J}$-simple if and only if $\cI_{\mathcal{J}}=0$.

A $2$-representation $\mathbf{M}$ is said to be {\em $\mathcal{J}$-radical} provided that, for any 
$1$-morphisms $\mathrm{F}$ and $\mathrm{G}$ in $\mathcal{J}$ and any $2$-morphism
$\alpha:\mathrm{F}\to \mathrm{G}$ such that
$\alpha\in\cI_{\mathcal{J}}$, the evaluation of the natural transformation  
$\mathbf{M}(\alpha)$ at any object gives a radical morphism.

\begin{proposition}\label{prop9}
Let $\cC$ be a finitary $2$-category and  $\mathbf{M}$ a simple transitive $2$-representation of $\cC$
with apex $\mathcal{J}$. Then $\mathcal{J}$ is, naturally, a two-sided cell of 
$\cC/\mathrm{Ann}_{\ccC}(\mathbf{M})$. Moreover, if $\mathbf{M}$ is $\mathcal{J}$-radical,
then the $2$-category $\cC/\mathrm{Ann}_{\ccC}(\mathbf{M})$ is $\mathcal{J}$-simple.
\end{proposition}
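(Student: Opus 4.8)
The plan is to prove the two assertions separately, as the second clearly builds on the first. For the first claim --- that $\mathcal{J}$ is naturally a two-sided cell of $\cC_{\mathbf{M}} := \cC/\mathrm{Ann}_{\ccC}(\mathbf{M})$ --- I would invoke Lemma~\ref{lem7} applied to the transitive $2$-representation $\mathbf{M}$: it tells us that $\mathcal{S}[\cC_{\mathbf{M}}]$ has a unique maximal two-sided cell, which is idempotent, and by the discussion following that lemma this maximal cell \emph{is} the apex $\mathcal{J}$. Since the two-sided cells of $\mathcal{S}[\cC_{\mathbf{M}}]$ are, by the same discussion, among the two-sided cells of $\mathcal{S}[\cC]$, the apex $\mathcal{J}$ survives as a genuine two-sided cell in $\cC_{\mathbf{M}}$, and this gives the first statement essentially for free.

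For the second assertion I would argue by contradiction using the defining property of $\mathcal{J}$-simplicity. Suppose $\cC_{\mathbf{M}}$ is not $\mathcal{J}$-simple; then by the characterization recalled in Subsection~\ref{s2-5.3}, the maximum ideal $\cI_{\mathcal{J}}$ (computed inside $\cC_{\mathbf{M}}$) not containing the identity $2$-morphisms of $1$-morphisms in $\mathcal{J}$ is nonzero. My goal would be to show this forces $\mathbf{M}$ to fail the $\mathcal{J}$-radical condition, contradicting the hypothesis. Concretely, pick a nonzero $2$-morphism $\alpha \colon \mathrm{F} \to \mathrm{G}$ lying in $\cI_{\mathcal{J}}$. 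The key point is that $\cI_{\mathcal{J}}$ is an ideal of $\cC_{\mathbf{M}}$, so its elements are \emph{not} annihilated by $\mathbf{M}$ (that is precisely what passing to the quotient by $\mathrm{Ann}_{\ccC}(\mathbf{M})$ accomplishes); hence $\mathbf{M}(\alpha) \neq 0$.

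The heart of the argument, and the step I expect to be the main obstacle, is reconciling ``$\alpha \in \cI_{\mathcal{J}}$ so $\mathbf{M}(\alpha)$ is a radical morphism by the $\mathcal{J}$-radical hypothesis'' with ``$\mathbf{M}(\alpha) \neq 0$ since $\alpha \notin \mathrm{Ann}_{\ccC}(\mathbf{M})$.'' A radical natural transformation that is nonzero is not yet a contradiction, so I would need to exploit simplicity of $\mathbf{M}$ to upgrade this. The plan is to use simple transitivity together with Corollary~\ref{cor8} and Lemma~\ref{lem801}: because $\mathcal{J}$ is the apex and $\mathbf{M}$ is simple transitive, the $1$-morphisms in $\mathcal{J}$ act by matrices with positive entries (Lemma~\ref{lem801}), and the whole module is generated from a single object by applying a maximal left cell in $\mathcal{J}$ (Corollary~\ref{cor8}). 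The two-sided ideal of $\mathbf{M}$ generated by the images of all radical morphisms $\mathbf{M}(\beta)$, for $\beta \in \cI_{\mathcal{J}}$, is a $\cC$-invariant ideal; one shows it is proper (radical morphisms cannot generate everything, using the nonvanishing/positivity on the top cell to control the generator), yet the existence of a nonzero $\mathbf{M}(\alpha)$ shows it is nonzero. This contradicts the simplicity of $\mathbf{M}$, which has no proper nonzero $\cC$-invariant ideal.

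To carry this out cleanly I would first reduce, as in the earlier proofs, to the case where $\mathcal{J}$ is the unique maximal two-sided cell of $\cC_{\mathbf{M}}$, so that every indecomposable $1$-morphism acting nontrivially either lies in $\mathcal{J}$ or factors through it. I expect the delicate bookkeeping to be verifying that the ideal generated by the radical images $\{\mathbf{M}(\beta) : \beta \in \cI_{\mathcal{J}}\}$ is stable under the $2$-action and is properly contained in $\mathbf{M}$; the positivity of $[\mathrm{F}]_{\oplus}$ for $\mathrm{F} \in \mathcal{J}$ from Lemma~\ref{lem801} should be exactly the input that prevents radical morphisms from saturating the action, since the relevant functors are ``full rank'' on the Grothendieck level and thus send the generator outside any ideal built from radical morphisms alone.
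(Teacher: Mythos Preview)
Your overall strategy matches the paper's proof: form the $\cC$-invariant ideal $\mathcal{I}$ in $\coprod_{\mathtt{i}}\mathbf{M}(\mathtt{i})$ generated by all $\mathbf{M}(\alpha)_X$ for $\alpha\in\cI_{\mathcal{J}}$, show it is proper using the $\mathcal{J}$-radical hypothesis, and conclude $\mathcal{I}=0$ from simple transitivity, whence $\cI_{\mathcal{J}}\subset\mathrm{Ann}_{\ccC}(\mathbf{M})$ and the quotient is $\mathcal{J}$-simple. So the architecture is right.

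Where you go astray is in locating the difficulty. You flag properness of $\mathcal{I}$ as ``delicate bookkeeping'' and reach for Lemma~\ref{lem801} and Corollary~\ref{cor8}, suggesting that positivity of the matrices $[\mathrm{F}]_\oplus$ is what prevents the ideal from saturating. That is not the mechanism. The point is entirely elementary: the $\mathcal{J}$-radical hypothesis says each generator $\mathbf{M}(\alpha)_X$ is a radical morphism, and the radical of a finitary $\Bbbk$-linear category is itself a two-sided ideal, so the ideal generated by radical morphisms stays inside the radical and is therefore proper. No Grothendieck-level information is needed. Likewise, $\cC$-invariance of $\mathcal{I}$ is not delicate: since $\cI_{\mathcal{J}}$ is a $2$-ideal, for any $1$-morphism $\mathrm{H}$ one has $\mathrm{id}_{\mathrm{H}}\circ_0\alpha\in\cI_{\mathcal{J}}$, and $\mathbf{M}(\mathrm{H})\big(\mathbf{M}(\alpha)_X\big)=\mathbf{M}(\mathrm{id}_{\mathrm{H}}\circ_0\alpha)_X$ is again one of the generators. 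So both steps you worry about are one-liners, and Lemma~\ref{lem801} and Corollary~\ref{cor8} play no role. Strip those out and your argument is exactly the paper's.
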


\begin{proof}
Set $\cA:=\cC/\mathrm{Ann}_{\ccC}(\mathbf{M})$. That $\mathcal{J}$ is a two-sided cell of  $\cA$, 
follows from definitions. Let $\mathcal{I}$ be the  two-sided ideal  in
\begin{displaymath}
\mathcal{C}:=\coprod_{\mathtt{i}\in\ccC}\mathbf{M}(\mathtt{i}), 
\end{displaymath}
generated by $\mathbf{M}(\alpha)_X$, for $\alpha\in \cI_{\mathcal{J}}$ and $X\in\mathrm{Ind}(\mathbf{M})$.
Then $\mathcal{I}$ is stable under the   action of $\cC$, by construction.
We claim that the ideal $\mathcal{I}$ is a proper ideal in $\mathcal{C}$. Indeed,
as $\mathbf{M}$ is $\mathcal{J}$-radical, the ideal $\mathcal{I}$ belongs to the radical of $\mathcal{C}$
and hence cannot coincide with $\mathcal{C}$. Now, from simple transitivity of $\mathbf{M}$, 
we get $\mathcal{I}=0$. This means that $\mathbf{M}(\alpha)=0$, for all $\alpha\in \cI_{\mathcal{J}}$.
Therefore $\cI_{\mathcal{J}}\subset \mathrm{Ann}_{\ccC}(\mathbf{M})$. In particular, the 
image of $\cI_{\mathcal{J}}$ in $\cA$, which is also the maximum ideal of $\cA$ not containing 
$\mathrm{id}_{\mathrm{F}}$, for any $\mathrm{F}\in\mathcal{J}$, is zero. Therefore
$\cA$ is $\mathcal{J}$-simple.
\end{proof}
\vspace{7mm}

\section{Diagrams for finitary $2$-representations}\label{s3}

\subsection{Diagrams}\label{s3.2}

Let $\cC$ be a finitary $2$-category and $\mathbf{M}$ a finitary $2$-representation of $\cC$.
Then the {\em diagram} of $\mathbf{M}$ is defined to be the Hasse diagram of the poset
$(\mathrm{Ind}(\mathbf{M})/_{\boldsymbol{\leftrightarrow}},\boldsymbol{\to})$.
For example, if $\mathbf{M}$ is transitive, then its diagram consists just of one point. 
In fact, a finitary $2$-representation is transitive if and only if its diagram consists of one point.

\begin{example}\label{ex1}
{\rm 
For $\mathtt{i}\in\cC$, consider the {\em principal} $2$-representation 
$\mathbf{P}_{\mathtt{i}}:=\cC(\mathtt{i},{}_-)$ and set 
\begin{displaymath}
\mathbf{P}:=\bigoplus_{\mathtt{i}\in\ccC}\mathbf{P}_{\mathtt{i}}.  
\end{displaymath}
Then $\mathrm{Ind}(\mathbf{P})=\mathcal{S}[\cC]$, the preorder $\boldsymbol{\to}$
coincides with $\leq_L$ and thus $\boldsymbol{\leftrightarrow}$ is the partial order
induced by $\leq_L$ on the set of all left cells for $\cC$.
}
\end{example}

\begin{example}\label{ex2}
{\rm 
Let $A$ be a basic, connected and non-simple finite dimensional $\Bbbk$-algebra.
Let $\mathcal{C}$ be a small category equivalent to $A\text{-}\mathrm{mod}$. 
Consider the associated $2$-category $\cC_A$ of {\em projective functors} on $\mathcal{C}$ 
as in \cite[Subsection~7.3]{MM1}. Assume that  $1=e_1+e_2+\dots+e_n$ is a primitive decomposition of $1\in A$,
then $\cC_A$ has exactly $n+1$ left cells: the left cell $\mathcal{L}_0$ containing the identity
$1$-morphism; and the left cells $\mathcal{L}_i$, for $i=1,2,\dots,n$, such that 
$\mathcal{L}_i$ contains the $1$-morphisms corresponding to indecomposable  $A\text{-}A$-bimodules
$Ae_j\otimes_{\Bbbk}e_iA$, where $j\in\{1,2,\dots,n\}$. The diagram of the $2$-representation
$\mathbf{P}$ from Example~\ref{ex1} in this case is:
\begin{displaymath}
\xymatrix{ 
&&&\mathcal{L}_0\ar@{-}[dlll]\ar@{-}[dll]\ar@{-}[dl]\ar@{-}[d]\ar@{-}[dr]\ar@{-}[drr]\ar@{-}[drrr]&&&\\
\mathcal{L}_1&\mathcal{L}_2&\dots&\dots&\dots&\mathcal{L}_{n-1}&\mathcal{L}_n
}
\end{displaymath}
Here, the elements in $\mathrm{Ind}(\mathbf{M})/_{\boldsymbol{\leftrightarrow}}$ are identified with
the corresponding simple transitive subquotients in $\mathbf{P}$.
}
\end{example}

\begin{example}\label{ex3}
{\rm 
Consider the $2$-category $\cS_3$ of {\em Soergel bimodules} for the symmetric group
$S_3=\{e,s,t,st,ts,sts=tst\}$, see \cite[Example~3]{MM2} and Subsection~\ref{s6.2}. 
Indecomposable $1$-morphisms
in $\cS_3$ correspond to elements in the {\em Kazhdan-Lusztig basis} of the ring
$\mathbb{Z}S_3$, see \cite{KaLu}, and the preorder $\geq_L$ is just the {\em Kazhdan-Lusztig
left order}. The left cells are given by $\{e\}$, $\{s,ts\}$, $\{t,st\}$, $\{sts\}$.
The diagram of the $2$-representation $\mathbf{P}$ from Example~\ref{ex1} in this case is:
\begin{displaymath}
\xymatrix{ 
&\mathcal{L}_{\{e\}}\ar@{-}[dl]\ar@{-}[dr]&\\
\mathcal{L}_{\{s,ts\}}\ar@{-}[dr]&&\mathcal{L}_{\{t,st\}}\ar@{-}[dl]\\
&\mathcal{L}_{\{sts\}}&
}
\end{displaymath}
Here, again, the elements in $\mathrm{Ind}(\mathbf{M})/_{\boldsymbol{\leftrightarrow}}$ are identified with
the corresponding simple transitive subquotients in $\mathbf{P}$.
}
\end{example}

\subsection{Diagrams and direct sums}\label{s3.3}

Let $\cC$ be a finitary $2$-category. Let $\mathbf{M}$ and $\mathbf{N}$ be finitary $2$-representations 
of $\cC$. Then the diagram of $\mathbf{M}\oplus\mathbf{N}$ is the disjoint union of the
diagrams for $\mathbf{M}$ and $\mathbf{N}$. 

On the other hand, if $\mathbf{K}$ is a finitary 
$2$-representation  of $\cC$ whose diagram is not connected, then, in general, it is not true that 
$\mathbf{K}$ is equivalent to a direct sum of two finitary $2$-representations  of $\cC$.
For example, the inflation (in the sense  of \cite[Subsection~3.6]{MM6}) of a transitive $2$-representation
by a connected finitary category $\mathcal{A}$ cannot be decomposed into a direct sum. On the other
hand, the diagram of such an inflation is a disjoint union of dots whose number equals the number
of isomorphism classes of indecomposable objects in $\mathcal{A}$.

Abusing the language, we will say that the diagram of a $2$-representation is {\em semi-simple} 
provided that it is a disjoint union of dots, that is, it has no edges.

\subsection{Diagrams of isotypic modules}\label{s3.4}

Recall from \cite[Section~4]{MM5} that a finitary $2$-representation $\mathbf{M}$ of a finitary $2$-category 
$\cC$ is called {\em isotypic} provided that all simple transitive subquotients of $\mathbf{M}$ are equivalent
(with each other). The observation from the previous subsection and the proof of 
\cite[Lemma~9]{MM6} imply the following.

\begin{proposition}\label{prop4}
Let $\cC$ be a weakly fiat $2$-category in which all two-sided cells are strongly regular.
Then the diagram of any isotypic $2$-representation of $\cC$ is semi-simple.
\end{proposition}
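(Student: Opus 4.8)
The plan is to reduce the statement to a result about the simple transitive subquotients, invoke the structure of isotypic modules established in the cited \cite{MM6}, and then argue that no two distinct $\boldsymbol{\leftrightarrow}$-classes can be comparable under $\boldsymbol{\to}$. The key observation is that the diagram of $\mathbf{M}$ is, by definition, the Hasse diagram of $(\mathrm{Ind}(\mathbf{M})/_{\boldsymbol{\leftrightarrow}}, \boldsymbol{\to})$, and that saying this diagram is semi-simple means precisely that the partial order $\boldsymbol{\to}$ on the set of $\boldsymbol{\leftrightarrow}$-classes is trivial, i.e. there are no strict comparabilities. So the entire content is to show: if $X$ and $Y$ are indecomposable objects lying in distinct equivalence classes, then we \emph{cannot} have $X \boldsymbol{\to} Y$ strictly (meaning $X \boldsymbol{\to} Y$ but $Y \not\boldsymbol{\to} X$).

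First I would set up the framework. Since $\mathbf{M}$ is isotypic, all its simple transitive subquotients are equivalent, and by the discussion in Subsection~\ref{s3.1} these subquotients correspond exactly to the elements of $\mathrm{Ind}(\mathbf{M})/_{\boldsymbol{\leftrightarrow}}$. Because $\cC$ is weakly fiat, every $1$-morphism $\mathrm{F}$ has an adjoint $\mathrm{F}^*$, which gives the crucial ``reverse'' direction: if $Y$ is a summand of $\mathrm{F}\, X$, then adjunction yields a nonzero morphism relating $X$ and $\mathrm{F}^*\, Y$, so one expects $X$ to be a summand of $\mathrm{F}^*\, Y$ up to the combinatorics of cells. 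The strong regularity hypothesis enters here: it forces the left and right cells inside each two-sided cell to interact in the rigid, ``permutation-like'' manner that makes the action combinatorially symmetric. I would extract from the proof of \cite[Lemma~9]{MM6} the precise statement that, under these hypotheses, the apex $\mathcal{J}$ of each transitive piece acts on indecomposables in a way that is reversible: applying $\mathrm{F}(\mathcal{J})$ and then its adjoint recovers the additive closure one started with.

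The main step is to combine this reversibility with the isotypic assumption to rule out a strict arrow. Suppose $X \boldsymbol{\to} Y$ via some $\mathrm{F}$, with $X$ and $Y$ in different classes. Since all subquotients are equivalent, the classes of $X$ and $Y$ have the same apex $\mathcal{J}$, and by Corollary~\ref{cor8} together with Lemma~\ref{lem801} the action of the indecomposable $1$-morphisms in $\mathcal{J}$ on each transitive subquotient is ``full'' (all matrix entries positive). Using the adjunction $(\mathrm{F}, \mathrm{F}^*)$ and the strong regularity, I would show that the arrow $X \boldsymbol{\to} Y$ forces a return arrow $Y \boldsymbol{\to} X$: concretely, one applies $\mathrm{F}^*$ to $Y$ and uses that the composite $\mathrm{F}^* \circ \mathrm{F}$ contains an indecomposable summand in $\mathcal{J}$ whose action on $X$ is nonzero, so that $X$ reappears as a summand of $\mathrm{F}^*\, Y$. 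This gives $Y \boldsymbol{\to} X$, hence $X \boldsymbol{\leftrightarrow} Y$, contradicting that they lie in different classes. Therefore $\boldsymbol{\to}$ induces only the trivial partial order, the Hasse diagram has no edges, and the diagram is semi-simple.

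The hard part will be making precise the reduction to the cited lemma, and in particular verifying that strong regularity is exactly what guarantees the return arrow rather than merely a weaker comparability. One must be careful that the relevant indecomposable summand of $\mathrm{F}^* \circ \mathrm{F}$ genuinely lies in the apex $\mathcal{J}$ and is not killed in the quotient $\cC_{\mathbf{M}}$; this is where the idempotency of the apex from Lemma~\ref{lem7} and the positivity from Lemma~\ref{lem801} do the real work. I expect the delicate point to be bookkeeping the distinction between the statement ``$Y$ is a summand of $\mathrm{F}\, X$'' at the level of the full $2$-representation $\mathbf{M}$ versus at the level of its simple transitive subquotients, since an arrow could in principle be created only in a subquotient; here the isotypic hypothesis is essential, as it pins down all subquotients to a single equivalence type and so prevents any asymmetry from arising between distinct layers.
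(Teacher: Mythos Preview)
Your overall reduction to \cite[Lemma~9]{MM6} matches the paper exactly: the paper's justification is the single sentence preceding the proposition, which simply invokes that lemma together with the observation in Subsection~\ref{s3.3}. So at the level of ``what is the key input'', you and the paper agree.

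Where your proposal diverges is in the mechanism you sketch for \emph{why} the cited lemma works. You argue that adjunction should reverse the arrow: from $Y$ a summand of $\mathrm{F}\,X$ you get $\mathrm{Hom}(X,\mathrm{F}^*Y)\neq 0$, and you then want to conclude that $X$ is a summand of $\mathrm{F}^*Y$. But a nonzero morphism between indecomposables in a finitary category does not in general yield a direct-summand inclusion, and your fallback argument via $\mathrm{F}^*\circ\mathrm{F}$ having a summand $\mathrm{G}\in\mathcal{J}$ with $\mathrm{G}\,X\neq 0$ only shows that $\mathrm{F}^*\mathrm{F}\,X$ is nonzero; it does not show that the specific piece $\mathrm{F}^*Y$ of $\mathrm{F}^*\mathrm{F}\,X$ contains $X$ as a summand. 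Strong regularity controls the cell combinatorics of $\mathrm{F}^*\circ\mathrm{F}$, but it does not by itself close this gap at the level of individual objects in $\mathbf{M}$.

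The actual argument in \cite[Lemma~9]{MM6}, as you can see from its generalization in Theorem~\ref{thm6} of this paper, is not an object-by-object adjunction reversal. One reduces to the case where $\mathbf{M}$ has exactly two simple transitive subquotients, writes $[\mathrm{F}]_\oplus^{\mathbf{M}}$ in block upper-triangular form, and uses a polynomial identity in $B^{(\mathcal{J})}$ (which strong regularity supplies in the weakly fiat case) together with positivity of the diagonal block to force the off-diagonal block to vanish. That is the step where strong regularity is genuinely used; your sketch does not reproduce it.
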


The arguments used in  \cite[Lemma~9]{MM6} can be generalized in a way 
we explain in the remainder of this subsection. 

A two-sided cell $\mathcal{J}$ of a finitary $2$-category $\cC$ is called {\em good} provided that 
it has the following property:
\begin{itemize}
\item there is an element $x\in B^{(\mathcal{J})}_+$ and $n,k,l\in\{1,2,3,\dots\}$, with $n>k\geq l$, such that 
\begin{equation}\label{eqgood}
x^n+a_{n-1}x^{n-1}+\dots +a_{k+1}x^{k+1}=a_k x^k+a_{k-1} x^{k-1}+\dots+ a_l x^l
\end{equation}
for some non-negative real numbers $a_{n-1},a_{n-2},\dots, a_l$ with $a_l\neq 0$.
\end{itemize}

\begin{example}\label{ex5}
{\rm 
Let $A$ be a basic, connected and non-simple finite dimensional $\Bbbk$-algebra.
Let $\mathcal{C}$ be a small category equivalent to $A\text{-}\mathrm{mod}$. 
Consider the associated $2$-category $\cC_A$ of {\em projective functors} on $\mathcal{C}$ 
as in \cite[Subsection~7.3]{MM1}. Then $\cC_A$ contains two two-sided cells, namely the two-sided cell
$\mathcal{J}_0$ with the identity $1$-morphism and the two-sided cells $\mathcal{J}$ with 
all other indecomposable $1$-morphisms.  Both $\mathcal{J}_0$ and $\mathcal{J}$ are, obviously, idempotent.
The two-sided cell $\mathcal{J}_0$ is good as the unique element of this cell is an idempotent. 
The two-sided cell $\mathcal{J}$ is good as well. Indeed, for $\mathrm{F}$ given as the tensor 
product with $A\otimes_{\Bbbk}A$, we have $\mathrm{F}\circ \mathrm{F}=\mathrm{F}^{\oplus \dim A}$.
}
\end{example}

For a rather different example of a good two-sided cell, see Subsection~\ref{s3.6} below.
We would like to formulate the following conjecture:

\begin{conjecture}\label{conj5}
Let $\cC$ be a finitary $2$-category such that each two-sided cell of $\cC$ is idempotent.
Then the diagram of any isotypic $2$-representation of $\cC$ is semi-simple.
\end{conjecture}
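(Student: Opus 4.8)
The plan is to reduce Conjecture~\ref{conj5} to the good-cell case and to isolate the one genuinely new ingredient. By the observations in Subsection~\ref{s3.3}, the diagram of $\mathbf{M}$ is the disjoint union of the diagrams of its isotypic blocks, so we may assume $\mathbf{M}$ is isotypic with a single apex $\mathcal{J}$ (that all weak subquotients share the apex $\mathcal{J}$ follows from isotypy together with Proposition~\ref{prop23}). The diagram is semi-simple precisely when the induced partial order on $\mathrm{Ind}(\mathbf{M})/_{\boldsymbol{\leftrightarrow}}$ is an antichain, i.e.\ no two distinct $\boldsymbol{\leftrightarrow}$-classes are comparable. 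Fixing a positive-multiplicity sum $\mathrm{F}$ of the indecomposable $1$-morphisms of $\mathcal{J}$, Lemma~\ref{lem801} shows that $\mathrm{F}$ acts on each transitive subquotient by a \emph{strictly positive} matrix, and, using the equivalences with a fixed transitive $\mathbf{L}$ to label the layers, this matrix $M$ is literally the same on every subquotient. Ordering the standard basis of $[\mathbf{M}(\mathtt{i})]_{\oplus}$ compatibly with $\boldsymbol{\to}$ makes $[\mathrm{F}]_{\oplus}$ block triangular with every diagonal block equal to $M$; an edge of the diagram is exactly a nonzero off-diagonal transition block, and it suffices to rule these out after restricting to a two-layer subquotient.

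The engine is then the good-cell mechanism underlying Theorem~\ref{thm6} (the generalisation of Proposition~\ref{prop4} and of \cite[Lemma~9]{MM6} developed above). Assume for the moment that $\mathcal{J}$ is good, so that $B^{(\mathcal{J})}$ satisfies an identity \eqref{eqgood}, say $p(x)=q(x)$ with $p(t)=t^{n}+a_{n-1}t^{n-1}+\dots+a_{k+1}t^{k+1}$ and $q(t)=a_kt^k+\dots+a_lt^l$. Choosing $\mathrm{F}$ to realise $x$, both $M$ and $[\mathrm{F}]_{\oplus}$ inherit this relation (modulo the correction discussed below), and the two-layer transition block $B$ enters $[\mathrm{F}]_{\oplus}^{m}$ through $S_m=\sum_{i+j=m-1}M^iBM^j$. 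Letting $u,v$ be the strictly positive left and right Perron--Frobenius eigenvectors of $M$ for its Perron eigenvalue $\lambda>0$, one computes $uS_mv=m\lambda^{m-1}(uBv)$, so pairing the relation with $u,v$ collapses it to $(uBv)\,r'(\lambda)=0$, where $r=p-q$. Factoring $r(t)=t^{l}\tilde r(t)$, the coefficient sequence of $\tilde r$ changes sign exactly once, so by Descartes' rule of signs $\lambda$ is a \emph{simple} positive root of $\tilde r$, hence of $r$; thus $r'(\lambda)\neq 0$ and $uBv=0$. Since $u,v>0$ and $B$ has non-negative entries, $B=0$: no edge. Iterating over the finitely many layers, peeling off maximal classes, gives semi-simplicity for good $\mathcal{J}$.

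Two points must be handled with care. The first is technical: the identity \eqref{eqgood} lives in $B^{(\mathcal{J})}$, i.e.\ modulo cells strictly above $\mathcal{J}$, whereas such larger cells, although they annihilate every transitive subquotient, may a priori act nontrivially \emph{between} layers of $\mathbf{M}$; thus $p([\mathrm{F}]_{\oplus})-q([\mathrm{F}]_{\oplus})$ need not vanish but equals a strictly-block-triangular correction coming from these larger cells. One must show this correction is harmless---most cleanly, that cells strictly $\geq_J$-above the apex act as the zero functor on all of $\mathbf{M}$, so that the $\boldsymbol{\to}$-action genuinely factors through $B^{(\mathcal{J})}$ and the right-hand side of the Perron pairing is literally $0$. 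When $\mathcal{J}$ is the maximal two-sided cell this is automatic; in general it requires an argument in the spirit of Lemma~\ref{lem7} and Corollary~\ref{cor8}.

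The second, and the real heart of the conjecture, is to remove the standing assumption and \textbf{prove that every idempotent two-sided cell is good}. This is where idempotency must be used decisively and is, I expect, the main obstacle. A first step is clear: since $B^{(\mathcal{J})}$ is finite dimensional, the powers of $x:=\mathrm{F}(\mathcal{J})\in B^{(\mathcal{J})}_+$ are linearly dependent, and idempotency of $\mathcal{J}$ (which, via Lemma~\ref{lem801} and Lemma~\ref{lem7}, guarantees $x^m\neq 0$ for all $m$ and that left multiplication by $x$ is a non-negative, non-nilpotent operator of spectral radius $\geq 1$) should force the lowest surviving term to persist, giving $a_l\neq 0$. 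The difficulty is entirely in the \emph{sign pattern}: a generic dependence $\sum_m\gamma_m x^m=0$ separates into $P(x)=Q(x)$ with non-negative coefficients, but the exponents occurring in $P$ and in $Q$ may interleave, whereas \eqref{eqgood} demands a single threshold $k$ with all ``$P$''-exponents above it and all ``$Q$''-exponents below. Producing such a single-sign-change positive relation---equivalently, exhibiting a positive element of $B^{(\mathcal{J})}$ whose powers obey a Perron-type recurrence of the required shape---is exactly the content that idempotency is being asked to supply, and it seems to require genuinely understanding the positively based structure of $B^{(\mathcal{J})}$ beyond mere finite dimensionality.
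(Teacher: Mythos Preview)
This statement is Conjecture~\ref{conj5}: the paper does \emph{not} prove it, and your proposal does not prove it either. What the paper does prove is Theorem~\ref{thm6}, which is precisely the conjecture under the extra hypothesis that the apex $\mathcal{J}$ of the common simple transitive subquotient is good. Your write-up is honest about this: you reduce the conjecture to the claim ``every idempotent two-sided cell is good'' and then explicitly flag that you cannot establish this claim. That reduction is correct in spirit, but the residual claim is exactly the open content of the conjecture, so there is a genuine gap and it is the one you yourself name. Nothing in the paper (and nothing in your argument) supplies the required single-sign-change positive relation in $B^{(\mathcal{J})}$ from idempotency alone; the paper instead verifies goodness by hand in the examples of Subsections~\ref{s3.6} and \ref{s3.7}.

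Two further remarks. First, your ``technical'' worry---that $1$-morphisms in two-sided cells strictly $J$-above the apex might contribute off-diagonally---is in fact resolved in the paper: the paragraph in the proof of Theorem~\ref{thm6} with $\mathrm{G}_1,\mathrm{G}_2,\mathrm{G}_3$ shows, using only that all two-sided cells are idempotent, that any indecomposable $\mathrm{G}$ with $\mathbf{M}(\mathrm{G})\neq 0$ satisfies $\mathrm{G}\leq_J\mathcal{J}$, so the relation \eqref{eqgood} genuinely holds for $x_{\mathbf{M}}$. You should invoke that argument rather than leave it as a loose end. Second, for the good-cell case your Perron--Frobenius/Descartes argument is a genuinely different and rather elegant route to the same conclusion as the paper's Theorem~\ref{thm6}. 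The paper multiplies the off-diagonal identity by $x_{\mathbf{N}}^{k}$ and repeatedly substitutes \eqref{eq3} to cancel the right-hand side down to a non-negative combination of terms $x_{\mathbf{N}}^{s}Xx_{\mathbf{N}}^{t}$ equal to zero; you instead pair with Perron eigenvectors to collapse everything to $(uBv)\,r'(\lambda)=0$ and then use a single-sign-change/Descartes count to force $r'(\lambda)\neq 0$. Your approach is shorter and conceptually transparent (it isolates exactly why the shape of \eqref{eqgood} matters: it guarantees $\lambda$ is a \emph{simple} positive root), while the paper's manipulation stays entirely within non-negative matrix arithmetic and avoids any appeal to spectral theory. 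Either way, this only reproves Theorem~\ref{thm6}; the conjecture itself remains open.
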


Below we show that Conjecture~\ref{conj5} is true in many cases related to good two-sided cells.

\begin{theorem}\label{thm6}
Let $\cC$ be a finitary $2$-category such that each two-sided cell of $\cC$ is idempotent.  
Let $\mathbf{N}$ be a simple transitive $2$-representation of $\cC$ with a good apex $\mathcal{J}$.
Let $\mathbf{M}$ be an isotypic $2$-representation of $\cC$ in which all simple transitive 
$2$-subquotients are equivalent to $\mathbf{N}$. Then the diagram of $\mathbf{M}$  is semi-simple. 
\end{theorem}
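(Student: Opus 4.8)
The plan is to reduce the statement to a question about finite non-negative matrices and then combine Perron--Frobenius theory with the sign pattern built into the definition of a good cell.

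First I would pass to the smallest non-trivial situation. If the diagram of $\mathbf{M}$ had an edge it would contain a covering relation between two classes, and replacing $\mathbf{M}$ by the corresponding subquotient of its weak Jordan--H\"older series I may assume that $\mathbf{M}$ has exactly two simple transitive subquotients, a sub $\mathbf{K}$ and a quotient $\mathbf{Q}$, both equivalent to $\mathbf{N}$, with a non-zero arrow $\mathbf{Q}\boldsymbol{\to}\mathbf{K}$. Ordering a standard basis of $[\mathbf{M}(\mathtt{i})]_{\oplus}$ compatibly with this filtration makes every $[\mathrm{F}]_{\oplus}$ block lower-triangular with both diagonal blocks equal to $[\mathrm{F}]_{\oplus}^{\mathbf{N}}$, and the assertion becomes that every off-diagonal block vanishes. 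I would first argue that it suffices to treat $1$-morphisms from the apex $\mathcal{J}$: a comparability between the two distinct classes should be detectable by $\mathcal{J}$, since inside each copy of $\mathbf{N}$ the transitive structure is realized by $\mathcal{J}$ (Corollary~\ref{cor8}), while larger cells contribute nothing by the next step.

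Second, I would annihilate the larger cells using idempotency. Any $1$-morphism in a cell $\mathcal{J}'>_J\mathcal{J}$ lies in $\mathrm{Ann}_{\ccC}(\mathbf{N})$, hence acts as $0$ on each subquotient and therefore strictly lower-triangularly on $\mathbf{M}$; in a two-step filtration the composite of two such operators is $0$. Since every cell is idempotent, each $\mathrm{H}\in\mathcal{J}'$ is a summand of a composite $\mathrm{F}'\circ\mathrm{G}'$ with $\mathrm{F}',\mathrm{G}'\in\mathcal{J}'$, so $\mathbf{M}(\mathrm{H})=0$; as all elements of $\mathcal{J}'$ are $\sim_J$, the whole cell annihilates $\mathbf{M}$. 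Consequently $[\mathbf{M}(\mathtt{i})]_{\oplus}$ is an honest module over $B^{(\mathcal{J})}$, and the good relation \eqref{eqgood} becomes an exact operator identity $p(M)=q(M)$, where $M=\bigl(\begin{smallmatrix} N & 0\\ B & N\end{smallmatrix}\bigr)$ is the operator of $x$, with $N=[x]_{\oplus}^{\mathbf{N}}$, off-diagonal block $B\geq 0$ to be shown zero, and $p(t)=t^n+a_{n-1}t^{n-1}+\dots+a_{k+1}t^{k+1}$, $q(t)=a_kt^k+\dots+a_lt^l$.

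Now the endgame. By Lemma~\ref{lem801} the matrix $N$ is strictly positive, hence primitive; let $\rho>0$ be its Perron eigenvalue with strictly positive left and right eigenvectors $u,v$. Since $uN^iBN^{j-1-i}v=\rho^{j-1}uBv$, pairing the off-diagonal block of $f(M)$ with $u$ and $v$ gives $f'(\rho)\,uBv$ for any polynomial $f$. Applied to $p(M)=q(M)$ this yields $(p'(\rho)-q'(\rho))\,uBv=0$, while pairing the diagonal identity $p(N)=q(N)$ gives $p(\rho)=q(\rho)$, so $\rho$ is a positive root of $p-q$. Writing $p-q=t^l g$, the coefficients of $g$ are first non-negative (leading coefficient $1$) and then non-positive (constant term $-a_l<0$): exactly one sign change. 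By Descartes' rule of signs $g$ has a unique, necessarily simple, positive root, which must be $\rho$; hence $(p-q)'(\rho)=\rho^l g'(\rho)\neq 0$, forcing $uBv=0$ and, as $u,v>0$ and $B\geq 0$, $B=0$. Because $x$ has full support in $\mathcal{J}$ and every off-diagonal block is non-negative, this forces the off-diagonal block of each $[\mathrm{G}]_{\oplus}$, $\mathrm{G}\in\mathcal{J}$, to vanish, contradicting the edge.

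The difficulty is twofold. The genuinely clever point is the last step: converting the positivity and sign pattern of a good cell into simplicity of the Perron root, so that the first-order (off-diagonal) part of the relation cannot be met by a non-zero $B$. The main structural obstacle, however, is producing an honest operator identity at all — one must annihilate every larger cell (this is exactly where the idempotency hypothesis and the reduction to two subquotients, which makes ``strictly triangular squared is zero'' available, are indispensable), and one must still verify that arrows between distinct classes are visible to the apex, so that killing the single block $B$ genuinely kills every edge rather than merely those coming from $\mathcal{J}$.
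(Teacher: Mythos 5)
Your proof is correct, and its scaffolding is the paper's own: the same reduction to two simple transitive subquotients, the same use of idempotency to show that every cell not below $\mathcal{J}$ in the two-sided order annihilates $\mathbf{M}$ (strictly triangular non-negative matrices square to zero), so that \eqref{eqgood} becomes the exact matrix identity \eqref{eq4}, and the same appeal to Lemma~\ref{lem801} for strict positivity of $x_{\mathbf{N}}$. Where you genuinely diverge is the endgame. The paper kills the block $X$ of \eqref{eq5} by hand: it compares off-diagonal blocks of the two sides of \eqref{eq4}, multiplies by $x_{\mathbf{N}}^{k}$, pads both sides with extra non-negative summands, and cancels the entire right-hand side using \eqref{eq3}, ending with a non-negative combination of terms $x_{\mathbf{N}}^{s}Xx_{\mathbf{N}}^{t}$ equal to zero in which the summand coming from $x_{\mathbf{M}}^{n}$ has coefficient $1$; positivity then forces $X=0$. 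You instead contract against the Perron eigenvectors of $x_{\mathbf{N}}$, turning the matrix identity into the scalar identity $(p-q)'(\rho)\,uBv=0$, and invoke Descartes' rule of signs (the coefficient sequence of $(p-q)/t^{l}$ has exactly one sign change) to see that $\rho$ is a simple positive root of $p-q$, whence $uBv=0$ and, by positivity of $u,v$ and non-negativity of $B$, $B=0$; both of your computations (the contraction of the off-diagonal block of $f(x_{\mathbf{M}})$ to $f'(\rho)\,uBv$, and the one-sign-change count giving a unique, simple, positive root) check out. The paper's route is more elementary, using nothing beyond non-negativity of matrices; yours is shorter and more conceptual, since it pinpoints exactly what ``goodness'' buys---simplicity of the unique positive root of $p-q$---and localizes the obstruction in a single scalar $uBv$. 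Finally, your observation that an edge must be detectable by $1$-morphisms in $\mathcal{J}$ alone is a genuine step (a $1$-morphism strictly below $\mathcal{J}$ could a priori produce the edge), one which the paper's own proof passes over silently; the precise argument is the one behind Corollary~\ref{cor26}: compose an offending $\mathrm{F}$ with $\mathrm{F}(\mathcal{J})$, whose matrix on the sub-$2$-representation is positive by Lemma~\ref{lem801}, note that every indecomposable summand of $\mathrm{F}(\mathcal{J})\circ\mathrm{F}$ lies in a cell $\geq_{J}\mathcal{J}$, and that cells strictly above $\mathcal{J}$ act by zero, so some member of $\mathcal{J}$ already realizes the edge.
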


\begin{proof}
Our proof is a generalization of the argument given in \cite[Lemma~9]{MM6}. Let 
$\{\mathrm{F}_i\,:\,i=1,2,\dots,m\}$ be a complete and irredundant list of representative of
the isomorphism classes of indecomposable $1$-morphisms in the cell $\mathcal{J}$. Let 
$\{c_i\,:\,i=1,2,\dots,m\}$ be positive real numbers such that the element
\begin{displaymath}
x=\sum_{i=1}^m c_i[\mathrm{F}_i] 
\end{displaymath}
satisfies 
\begin{displaymath}
x^n+a_{n-1}x^{n-1}+\dots +a_{k+1}x^{k+1}=a_k x^k+a_{k-1} x^{k-1}+\dots+ a_l x^l
\end{displaymath}
in $B_+^{(\mathcal{J})}$ for some non-negative real numbers $a_{n-1},a_{n-2},\dots, a_l$ with $a_l\neq 0$.
These exist as $\mathcal{J}$ is assumed to be good. Set
\begin{displaymath}
x_{\mathbf{N}}:= \sum_{i=1}^m c_i[\mathrm{F}_i]^{\mathbf{N}}_{\oplus}\qquad\text{ and }\qquad
x_{\mathbf{M}}:= \sum_{i=1}^m c_i[\mathrm{F}_i]^{\mathbf{M}}_{\oplus}.
\end{displaymath}
Then we have
\begin{equation}\label{eq3}
x_{\mathbf{N}}^n+a_{n-1}x_{\mathbf{N}}^{n-1}+\dots +a_{k+1}x_{\mathbf{N}}^{k+1}=a_k x_{\mathbf{N}}^k
+a_{k-1} x_{\mathbf{N}}^{k-1}+\dots+ a_l x_{\mathbf{N}}^l
\end{equation}
as $\mathbf{N}$ annihilates all two-sided cells which are strictly larger than $\mathcal{J}$ in the
two-sided order. Note that, by Lemma~\ref{lem801}, all coefficients in the matrix 
$x_{\mathbf{N}}$ are positive.

As explained in the proof of \cite[Lemma~9]{MM6}, it is enough to consider the case when
$\mathbf{M}$ has exactly two simple transitive subquotients (each equivalent to $\mathbf{N}$).
In this case, we may assume that the matrix $x_{\mathbf{M}}$ has the form
\begin{equation}\label{eq5}
x_{\mathbf{M}}=\left(\begin{array}{c|c}x_{\mathbf{N}}&X\\\hline 0&x_{\mathbf{N}}\end{array}\right)
\end{equation}
for some matrix $X$ with non-negative coefficients. To prove our theorem we need to show that $X=0$.

Let $\mathrm{G}$ be an indecomposable $1$-morphism such that $\mathbf{M}(\mathrm{G})\neq 0$. 
We claim that $\mathrm{G}\leq_J \mathcal{J}$. Indeed, assume that this is not the case and 
denote by $\mathcal{J}'$ the two-sided cell containing $\mathrm{G}$. Let $\mathrm{G}_1$, 
$\mathrm{G}_2$ and $\mathrm{G}_3$ be indecomposable $1$-morphism in $\mathcal{J}'$ such that 
$\mathrm{G}_1$ is isomorphic to a direct summand of $\mathrm{G}_2\circ \mathrm{G}_3$. These
exist because $\mathcal{J}'$ is idempotent by our assumptions. As $\mathbf{N}$ annihilates all
$\mathrm{G}_i$, for $i=1,2,3$, we can chose an ordering of indecomposable objects in all
$\mathbf{M}(\mathtt{i})$, where $\mathtt{i}\in\cC$, such that the matrices of all 
$\mathbf{M}(\mathrm{G}_i)$ have the form
\begin{equation}\label{eq1}
\left(\begin{array}{c|c}0&*\\\hline 0&0\end{array} \right) 
\end{equation}
(here the diagonal blocks correspond to simple transitive $2$-subquotients). Now, on the one hand,
all these matrices must be non-zero (as $\mathcal{J}'$ does not belong to the annihilator of $\mathbf{M}$).
On the other hand, the matrix of $\mathbf{M}(\mathrm{G}_2\circ \mathrm{G}_3)=
\mathbf{M}(\mathrm{G}_2)\circ \mathbf{M}(\mathrm{G}_3)$ is zero (because any product of two matrices of 
the form \eqref{eq1} is zero) and hence the matrix of 
$\mathbf{M}(\mathrm{G}_1)$, being a non-negative  summand of the matrix of 
$\mathbf{M}(\mathrm{G}_2\circ \mathrm{G}_3)$, is zero as well, a contradiction.

The previous paragraph implies that 
\begin{equation}\label{eq4}
x_{\mathbf{M}}^n+a_{n-1}x_{\mathbf{M}}^{n-1}+\dots +a_{k+1}x_{\mathbf{M}}^{k+1}=a_k x_{\mathbf{M}}^k
+a_{k-1} x_{\mathbf{M}}^{k-1}+\dots+ a_l x_{\mathbf{M}}^l.
\end{equation}
We plug \eqref{eq5} into \eqref{eq4} and compare the right upper part of the matrices on both sides.
This gives the equation
\begin{multline*}
\left(\sum_{j=0}^{n-1}x_{\mathbf{N}}^{n-1-j}Xx_{\mathbf{N}}^{j}\right)+
a_{n-1}\left(\sum_{j=0}^{n-2}x_{\mathbf{N}}^{n-2-j}Xx_{\mathbf{N}}^{j}\right)+\dots+
a_{k+1}\left(\sum_{j=0}^{k}x_{\mathbf{N}}^{k-j}Xx_{\mathbf{N}}^{j}\right)=\\=
a_{k}\left(\sum_{j=0}^{k-1}x_{\mathbf{N}}^{k-1-j}Xx_{\mathbf{N}}^{j}\right)+
a_{k-1}\left(\sum_{j=0}^{k-2}x_{\mathbf{N}}^{k-2-j}Xx_{\mathbf{N}}^{j}\right)+\dots+
a_{l}\left(\sum_{j=0}^{l-1}x_{\mathbf{N}}^{l-1-j}Xx_{\mathbf{N}}^{j}\right).
\end{multline*}
We multiply both summands with $x_{\mathbf{N}}^{k}$ from the left and also add to both sides
summands 
\begin{displaymath}
a_{k-1}x_{\mathbf{N}}^{k-1}Xx_{\mathbf{N}}^{k-1},\,\,\, 
a_{k-2}x_{\mathbf{N}}^{k-1}Xx_{\mathbf{N}}^{k-2},\,\,\,  
a_{k-2}x_{\mathbf{N}}^{k-2}Xx_{\mathbf{N}}^{k-1},\,\,\, \dots,\,\,\, 
a_{l}x_{\mathbf{N}}^{l}Xx_{\mathbf{N}}^{k-1}. 
\end{displaymath}
Now we can use \eqref{eq3} to simplify both sides of the above equation and in this way get rid  
of all summands on the right hand side  leaving us with an equation of the form
``a linear combination of matrices of the form $x_{\mathbf{N}}^{s}Xx_{\mathbf{N}}^{t}$ with non-negative
integer coefficients equals zero''. As $n>k$, at least one remaining summand on the left hand side
will appear with a positive coefficient, namely the coefficient $1$ coming from the summand $x_{\mathbf{M}}^n$.
This yields the condition $x_{\mathbf{N}}^{s}Xx_{\mathbf{N}}^{t}=0$, for some positive $s$ and $t$.
Since all coefficients of $x_{\mathbf{N}}$ are positive and all coefficients of $X$ are non-negative, 
this forces  $X=0$, which completes the proof.
\end{proof}

An immediate consequence of Theorem~\ref{thm6} is the following corollary.

\begin{corollary}\label{thm6-1}
Let $\cC$ be a finitary $2$-category such that each two-sided cell of $\cC$ is good and idempotent.  
Then the diagram of any isotypic $2$-representation of $\cC$ is semi-simple. 
\end{corollary}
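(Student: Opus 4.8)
The plan is to deduce this directly from Theorem~\ref{thm6}; the only work is to extract, from the isotypic hypothesis, a single simple transitive $2$-representation to play the role of $\mathbf{N}$ and to check that its apex is good. First I would fix an arbitrary isotypic $2$-representation $\mathbf{M}$ of $\cC$. By the definition of \emph{isotypic}, all simple transitive subquotients of $\mathbf{M}$ (in the sense of the weak Jordan--H\"older series discussed in Subsection~\ref{s3.1}) are mutually equivalent; choosing one of them gives a simple transitive $2$-representation $\mathbf{N}$ such that every simple transitive subquotient of $\mathbf{M}$ is equivalent to $\mathbf{N}$. If $\mathbf{M}$ is the zero representation there is nothing to prove, since its diagram is then empty and hence vacuously semi-simple.

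Next I would locate the apex of $\mathbf{N}$. As $\mathbf{N}$ is transitive, Lemma~\ref{lem7} applies to it and produces a unique maximal two-sided cell $\mathcal{J}$ of $\mathcal{S}[\cC_{\mathbf{N}}]$, which by definition is the apex of $\mathbf{N}$. As noted immediately after Lemma~\ref{lem7}, this $\mathcal{J}$ is simultaneously a two-sided cell of $\mathcal{S}[\cC]$. The standing hypothesis of the corollary is that \emph{every} two-sided cell of $\cC$ is good and idempotent; applied to the particular cell $\mathcal{J}$, this tells us in particular that $\mathcal{J}$ is a \emph{good} apex for $\mathbf{N}$.

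Finally, I would invoke Theorem~\ref{thm6}. All of its hypotheses are now in place: each two-sided cell of $\cC$ is idempotent (by assumption), $\mathbf{N}$ is simple transitive with good apex $\mathcal{J}$ (just verified), and $\mathbf{M}$ is isotypic with all simple transitive subquotients equivalent to $\mathbf{N}$ (by construction). The theorem then gives that the diagram of $\mathbf{M}$ is semi-simple, which is exactly the claim.

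Since the substantive analytic work --- the polynomial manipulation of the matrices $x_{\mathbf{N}}$ and $x_{\mathbf{M}}$ that forces the off-diagonal block $X$ in \eqref{eq5} to vanish --- is already packaged inside Theorem~\ref{thm6}, I do not expect any genuine obstacle at the level of the corollary. The single point demanding care is the bookkeeping of the middle paragraph: one must confirm that the apex furnished by Lemma~\ref{lem7} is literally one of the two-sided cells of $\cC$ to which the goodness hypothesis of the corollary applies, so that the good-apex condition of Theorem~\ref{thm6} is genuinely met rather than merely assumed.
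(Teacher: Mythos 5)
Your proposal is correct and matches the paper's intent exactly: the paper states this corollary as an ``immediate consequence'' of Theorem~\ref{thm6}, and the content you supply --- choosing a simple transitive subquotient $\mathbf{N}$ of an isotypic $\mathbf{M}$, observing via Lemma~\ref{lem7} that its apex is a two-sided cell of $\cC$ and hence good by hypothesis, then invoking Theorem~\ref{thm6} --- is precisely the routine verification the authors left implicit. Your final paragraph correctly identifies the only point requiring care (that the apex is genuinely one of the cells covered by the goodness hypothesis), and your handling of it is sound.
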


We note that Theorem~\ref{thm6} is interesting in connection to the remark made at the end of 
\cite[Section~4]{Lu}.

\subsection{Soergel bimodules of type $B_2$}\label{s3.6}

Let $\cS$ be a finitary $2$-category of Soergel bimodules over the coinvariant algebra of a
Weyl group of type $B_2$, see \cite{Zi} for details. Indecomposable $1$-morphisms for $\cS$
are $\theta_w$, where $w$ runs through the Weyl group 
\begin{displaymath}
W=\{e,s,t,st,ts,sts,tst,w_0:=stst=tsts\} 
\end{displaymath}
of type $B_2$. These indecomposable $1$-morphisms correspond to elements in the Kazhdan-Lusztig basis
of $\mathbb{Z}W$. We have three two-sided cells: $\mathcal{J}_{\{e\}}$, $\mathcal{J}_{\{w_0\}}$
and the cell $\mathcal{J}$ formed by all remaining elements. They all are idempotent and the first two
are good since $\theta_e^2=\theta_e$ and $\theta_{w_0}^2=\theta_{w_0}^{\oplus 8}$. 

For the two-sided cell $\mathcal{J}$, we have
$\mathcal{J}=\{\theta_s,\theta_{ts},\theta_{sts},\theta_t,\theta_{st},\theta_{tst}\}$. 
Then, modulo the ideal generated by $\theta_{w_0}$, multiplication of elements in $\mathcal{J}$ 
is given by the following table:
{\tiny
\begin{displaymath}
\begin{array}{c||c|c|c|c|c|c}
&\theta_s&\theta_{ts}&\theta_{sts}&\theta_t&\theta_{st}&\theta_{tst}\\
\hline\hline
\theta_s&\theta_s{\oplus}\theta_s&\theta_{sts}\oplus \theta_s&\theta_{sts}{\oplus}\theta_{sts}
&\theta_{st}&\theta_{st}\oplus \theta_{st}&\theta_{st}\\
\hline
\theta_{ts}&\theta_{ts}{\oplus}\theta_{ts}&\theta_{ts}{\oplus}\theta_{ts}&\theta_{ts}{\oplus}\theta_{ts}
&\theta_{tst}{\oplus}\theta_{t}&\theta_{tst}{\oplus}\theta_{tst}{\oplus}
\theta_{t}{\oplus}\theta_{t}&\theta_{tst}{\oplus}\theta_{t}\\
\hline
\theta_{sts}&\theta_{sts}{\oplus}\theta_{sts}&\theta_{sts}\oplus \theta_s&\theta_{s}{\oplus}\theta_{s}
&\theta_{st}&\theta_{st}\oplus \theta_{st}&\theta_{st}\\
\hline
\theta_{t}&\theta_{ts}&\theta_{ts}\oplus \theta_{ts}&\theta_{ts}
&\theta_{t}\oplus \theta_{t}&\theta_{tst}\oplus \theta_{t}&\theta_{tst}\oplus \theta_{tst}\\
\hline
\theta_{st}&\theta_{sts}\oplus \theta_{s}&\theta_{sts}\oplus\theta_{sts}\oplus \theta_{s}
\oplus \theta_{s}&\theta_{sts}\oplus \theta_{s}
&\theta_{st}\oplus \theta_{st}&\theta_{st}\oplus \theta_{st}&\theta_{st}\oplus \theta_{st}\\
\hline
\theta_{tst}&\theta_{ts}&\theta_{ts}\oplus \theta_{ts}&\theta_{ts}
&\theta_{tst}\oplus \theta_{tst}&\theta_{tst}\oplus \theta_{t}&\theta_{t}\oplus \theta_{t}\\
\hline
\end{array} 
\end{displaymath}
}
Consider the element
\begin{displaymath}
x:=[\theta_s]_{\oplus}+[\theta_t]_{\oplus}+[\theta_{sts}]_{\oplus}+
[\theta_{tst}]_{\oplus}+\sqrt{2}[\theta_{st}]_{\oplus}+\sqrt{2}[\theta_{ts}]_{\oplus}\in
B_+^{(\mathcal{J})}.
\end{displaymath}
A direct computation, using the above multiplication table of the $\theta_w$'s,
gives the equality $x^{2}=(8+4\sqrt{2})x$. This implies that the two-sided cell 
$\mathcal{J}$ is good. From Corollary~\ref{thm6-1} we thus get that the diagram of 
any isotypic $2$-representation of $\cS$ is semi-simple.

\subsection{Soergel bimodules of type $I_2(5)$}\label{s3.7}

Let $\cS$ be a finitary $2$-category of Soergel bimodules over the coinvariant algebra of a
Coxeter group of type $I_2(5)$, see \cite{El} for details. Indecomposable $1$-morphisms for $\cS$
are $\theta_w$, where $w$ runs through the dihedral group 
\begin{displaymath}
D_{2\cdot 5}=\{e,s,t,st,ts,sts,tst,stst,tsts,w_0:=ststs=tstst\} 
\end{displaymath}
which is a Coxeter group of type $I_2(5)$. 
These indecomposable $1$-morphisms correspond to elements in the Kazhdan-Lusztig basis
of $\mathbb{Z}W$. We have three two-sided cells: $\mathcal{J}_{\{e\}}$, $\mathcal{J}_{\{w_0\}}$
and the cell $\mathcal{J}$ formed by all remaining elements. They all are idempotent and the first two
are good since $\theta_e^2=\theta_e$ and $\theta_{w_0}^2=\theta_{w_0}^{\oplus 10}$. 

For the two-sided cell $\mathcal{J}$, we have 
$\mathcal{J}=\{\theta_s,\theta_{ts},\theta_{sts},\theta_t,\theta_{st},\theta_{tst},\theta_{stst},\theta_{tsts}\}$. 
Then, modulo the ideal generated by $\theta_{w_0}$, multiplication of elements in $\mathcal{J}$ 
is given by the following table, in which, for simplicity, we denote $\theta_w$ by $w$:
{\tiny
\begin{displaymath}
\begin{array}{c||c|c|c|c|c|c|c|c}
&{}s&{}{ts}&{}{sts}&{}{tsts}&{}t&{}{st}&{}{tst}&{}{stst}\\
\hline\hline
{}s&{}s^{\oplus 2}&{}{sts}\oplus {}s&{sts}^{\oplus 2}
&{}{sts}&st&{st}^{\oplus 2}&{}{stst}\oplus st&stst^{\oplus 2}\\
\hline
{}{ts}&{ts}^{\oplus 2}&{}{tsts}{\oplus}{}{ts}^{\oplus 2}&{}{tsts}^{\oplus 2}{\oplus}{}{ts}^{\oplus 2}
&{}{tsts}{\oplus}{}{ts}&{}{tst}{\oplus}{t}&{}{tst}^{\oplus 2}{\oplus}{}{t}^{\oplus 2}
&{}{tst}^{\oplus 2}{\oplus}{}{t}&tst^{\oplus 2}\\
\hline
{}{sts}&{}{sts}^{\oplus 2}&{}{sts}^{\oplus 2}\oplus {}s&{}{sts}^{\oplus 2}{\oplus}{}{s}^{\oplus 2}
&{}{sts}\oplus s&{}{stst}\oplus {}{st}&{}{stst}^{\oplus 2}\oplus {}{st}^{\oplus 2}
&{}{stst}\oplus {}{st}^{\oplus 2}&{st}^{\oplus 2}\\
\hline
{}{tsts}&{}{tsts}^{\oplus 2}&{}{tsts}\oplus {}ts&{}{ts}^{\oplus 2}
&{}{ts}&{}{tst}&{}{tst}^{\oplus 2}&{}{tst}\oplus {}{t}&{t}^{\oplus 2}\\
\hline
{}{t}&{}{ts}&{ts}^{\oplus 2}&{}{tsts}\oplus ts
&{tsts}^{\oplus 2}&t^{\oplus 2}&{}{tst}\oplus {}{t}&tst^{\oplus 2}&tst\\
\hline
{}{st}&{}{sts}\oplus {}{s}&{}{sts}^{\oplus 2}\oplus {}{s}^{\oplus 2}
&{}{sts}^{\oplus 2}\oplus s&{sts}^{\oplus 2}&{}{st}^{\oplus 2}&{}{stst}\oplus {}{st}^{\oplus 2}
&{}{stst}^{\oplus 2}\oplus {}{st}^{\oplus 2}&{stst}\oplus st\\
\hline
{}{tst}&tsts\oplus {}{ts}&{}{tsts}^{\oplus 2}\oplus {}{ts}^{\oplus 2}&{}{tsts}\oplus ts^{\oplus 2}
&{}{ts}^{\oplus 2}&{}{tst}^{\oplus 2}&{}{tst}^{\oplus 2}\oplus {}{t}
&{}{tst}^{\oplus 2}\oplus {}{t}^{\oplus 2}&tst\oplus t\\
\hline
{}{stst}&sts&{}{sts}^{\oplus 2}&sts\oplus s&s^{\oplus 2}&{}{stst}^{\oplus 2}&{}{stst}\oplus {}{st}
&{}{st}^{\oplus 2}&st\\
\hline
\end{array} 
\end{displaymath}
}
Consider the element
\begin{multline*}
y:=\frac{1}{2}[\theta_s]_{\oplus}+\frac{1}{2}[\theta_t]_{\oplus}+[\theta_{st}]_{\oplus}+[\theta_{ts}]_{\oplus}+\\
+ [\theta_{sts}]_{\oplus}+
[\theta_{tst}]_{\oplus}+\frac{1}{2}[\theta_{stst}]_{\oplus}+\frac{1}{2}[\theta_{tsts}]_{\oplus}\in
B_+^{(\mathcal{J})}.
\end{multline*}
A direct computation, using the above multiplication table of the $\theta_w$'s,
gives the equality $y^{3}=15y^2+5y$. This implies that the two-sided cell 
$\mathcal{J}$ is good. From Corollary~\ref{thm6-1} we thus get that the diagram of 
any isotypic $2$-representation of $\cS$ is semi-simple.

\section{Discrete extensions of finitary $2$-representations}\label{s4}

\subsection{Decorated diagrams}\label{s4.1}

Let $\cC$ be a finitary $2$-category and $\mathbf{M}$ a finitary $2$-representation of $\cC$.
Let, further, $\rho$ and $\tau$ be two elements in $\mathrm{Ind}(\mathbf{M})/_{\boldsymbol{\leftrightarrow}}$
which are connected by an edge in the diagram of $\mathbf{M}$ and such that 
$\rho\boldsymbol{\to}\tau$. In this situation we can decorate the edge
between $\rho$ and $\tau$ by the set $\Theta_{\tau,\rho}$ which consists of the isomorphism
classes of all indecomposable $1$-morphisms $\mathrm{F}$ in $\cC$ for which there is 
$X\in\rho$  such that  $\mathrm{F}\, X$ contains, 
as a direct summand, some element in $\tau$. Note that, by construction, each edge in the 
diagram of $\mathbf{M}$ is decorated by a non-empty set. The obtained construct is called
the {\em decorated} diagram of $\mathbf{M}$. In the classical representation theory, a similar 
objects appear, in particular, in \cite{Ri}.

\begin{example}\label{ex21}
{\rm
In the situation described in Example~\ref{ex2}, the decorated diagram of the  $2$-representation
$\mathbf{P}$ is:
\begin{displaymath}
\xymatrix{ 
&&&\mathcal{L}_0\ar@{-}[dlll]|-{\mathcal{L}_1}
\ar@{-}[dll]|-{\mathcal{L}_2}\ar@{-}[dl]|-{\dots}
\ar@{-}[d]|-{\dots}\ar@{-}[dr]|-{\dots}
\ar@{-}[drr]|-{\mathcal{L}_{n-1}}\ar@{-}[drrr]|-{\mathcal{L}_n}&&&\\
\mathcal{L}_1&\mathcal{L}_2&\dots&\dots&\dots&\mathcal{L}_{n-1}&\mathcal{L}_n
}
\end{displaymath}
} 
\end{example}

\begin{example}\label{ex22}
{\rm
In the situation described in Example~\ref{ex3}, the decorated diagram of the  $2$-representation
$\mathbf{P}$ is:
\begin{displaymath}
\xymatrix{ 
&\mathcal{L}_{\{e\}}\ar@{-}[dl]|-{{\{s,ts\}}}\ar@{-}[dr]|-{{\{t,st\}}}&\\
\mathcal{L}_{\{s,ts\}}\ar@{-}[dr]|-{{\{s,st,ts,sts\}}}&&\mathcal{L}_{\{t,st\}}\ar@{-}[dl]|-{{\{t,st,ts,sts\}}}\\
&\mathcal{L}_{\{sts\}}&
}
\end{displaymath}
} 
\end{example}

Decorated diagrams contain slightly more information than ordinary diagrams. However, we would like to 
note that the information encoded in decorated diagrams does not provide the ``full picture'' of the action
on the level of objects (in the $\mathbf{M}(\mathtt{i})$'s) and $1$-morphisms (in $\cC$). 
The reason for this is the analogy with
usual diagrams of usual modules. Consider the algebra $A=\mathbb{C}[x,y]/(x^3,y^2,xy)$. Let $M$ be the 
quotient of the regular $A$-module by the submodule generated by $y-x^2$. Then $M$ is uniserial of length three
and hence has the following diagram:
\begin{displaymath}
\xymatrix{
\bullet\ar@{-}[d]\\\bullet\ar@{-}[d]\\\bullet 
} 
\end{displaymath}
In analogy with the above, we can decorate both edges of this diagram with $x$. However, the action of $y$
maps the top of $M$ to the socle of $M$ and does not factor through any intermediate subquotients.
Therefore it does not appear on the decorated diagram.

\subsection{Discrete extensions}\label{s4.2}

Let $\cC$ be a finitary $2$-category and $\mathbf{M}$ a finitary $2$-representation of $\cC$.
Now we would like to restrict our consideration to the case when the diagram of $\mathbf{M}$
has exactly two vertices. We denote by $\mathbf{K}$ a transitive $2$-subrepresentations of 
$\mathbf{M}$ and by $\mathbf{N}$ the quotient of $\mathbf{M}$ by the ideal generated by the
identity morphisms for all objects in all $\mathbf{K}(\mathtt{i})$, where $\mathtt{i}\in\cC$.
In this situation we say that 
\begin{equation}\label{eq201}
0\to \mathbf{K}\to\mathbf{M}\to\mathbf{N}\to 0, 
\end{equation}
where $\mathbf{K}\to\mathbf{M}$ is the natural inclusion and $\mathbf{M}\to\mathbf{N}$ is the natural
projection, is a {\em short exact sequence} of $2$-representations. For the corresponding notion in 
the theory of finitary $\Bbbk$-linear categories, see \cite[Subsection~2.1.1]{SVV}. For 
the corresponding notion in the theory of abelian categories, see \cite{PV} and references therein.

In the situation above, the diagram of $\mathbf{M}$ is either
\begin{displaymath}
\xymatrix{
\rho\ar@{-}[d]&&&&&\\
\tau&&\mathrm{or}&&\rho&\tau,
}
\end{displaymath}
where $\rho$ corresponds to $\mathbf{N}$ and $\tau$ corresponds to $\mathbf{K}$.
In the  case of the left diagram, we say that the set $\Theta_{\tau,\rho}$ represents the {\em discrete extension}
of $\mathbf{N}$ by $\mathbf{K}$ corresponding to $\mathbf{M}$. In the case of the right diagram, we say that this
discrete extension is represented by the empty set.

For two transitive finitary $2$-representations $\mathbf{N}'$ and $\mathbf{K}'$, we define 
the (finite) set $\mathrm{Dext}(\mathbf{N}',\mathbf{K}')$ of {\em discrete extensions}
of $\mathbf{N}'$ by $\mathbf{K}'$, as the set of all non-empty subsets $\Theta$ of the set of isomorphism
classes of indecomposable $1$-morphisms in $\cC$ for which there is a short exact sequence \eqref{eq201}
such that $\mathbf{K}$ is equivalent to $\mathbf{K}'$, $\mathbf{N}$ is equivalent to $\mathbf{N}'$,
and $\Theta$ represents the discrete extension of $\mathbf{N}$ by $\mathbf{K}$ corresponding to $\mathbf{M}$.
Directly from the definition we have that 
\begin{displaymath}
\mathrm{Dext}(\mathbf{N}',\mathbf{K}')= \mathrm{Dext}(\mathbf{N}'',\mathbf{K}'')
\end{displaymath}
provided that $\mathbf{K}''$ is equivalent to $\mathbf{K}'$ and $\mathbf{N}''$ is equivalent to $\mathbf{N}'$.

From the definitions, we have that $\mathrm{Dext}(\mathbf{N}',\mathbf{K}')=\varnothing$ is equivalent to 
saying that the diagram of any short exact sequence \eqref{eq201}
such that $\mathbf{K}$ is equivalent to $\mathbf{K}'$ and $\mathbf{N}$ is equivalent to $\mathbf{N}'$
is semi-simple. For example, the statement of Theorem~\ref{thm6}
can be equivalently reformulated using the formula $\mathrm{Dext}(\mathbf{N},\mathbf{N})=\varnothing$.

\subsection{Elementary properties of discrete extensions}\label{s4.3}

Let $\cC$ be a finitary $2$-category.

\begin{proposition}\label{prop24}
Let $\mathbf{K}$ and $\mathbf{N}$ be transitive $2$-representations of $\cC$. Denote by 
$\mathcal{J}_{\mathbf{N}}$ the apex of $\mathbf{N}$ and by $\mathcal{J}_{\mathbf{K}}$ the apex of $\mathbf{K}$.
Then, for any $\Theta\in \mathrm{Dext}(\mathbf{N},\mathbf{K})$ and any $\mathrm{F}\in \Theta$ such that the
two-sided cell containing  $\mathrm{F}$ is idempotent, we have either
$\mathrm{F}\leq_J \mathcal{J}_{\mathbf{K}}$ or $\mathrm{F}\leq_J \mathcal{J}_{\mathbf{N}}$ (or both).
\end{proposition}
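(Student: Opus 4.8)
The plan is to unpack the definition of $\Theta\in\mathrm{Dext}(\mathbf{N},\mathbf{K})$ and argue by contradiction: suppose $\mathrm{F}\in\Theta$ lies in an idempotent two-sided cell $\mathcal{J}'$ with $\mathrm{F}\not\leq_J\mathcal{J}_{\mathbf{K}}$ and $\mathrm{F}\not\leq_J\mathcal{J}_{\mathbf{N}}$, and derive a contradiction with idempotency of $\mathcal{J}'$. By definition of the discrete extension set, there is a short exact sequence $0\to\mathbf{K}\to\mathbf{M}\to\mathbf{N}\to 0$ of $2$-representations whose associated decorated diagram has the edge $\rho\boldsymbol{\to}\tau$ decorated by a set $\Theta$ containing $\mathrm{F}$; here $\rho$ corresponds to $\mathbf{N}$ and $\tau$ to $\mathbf{K}$. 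Choosing a compatible ordering of indecomposable objects in the $\mathbf{M}(\mathtt{i})$, with the block for $\mathbf{N}$ first and the block for $\mathbf{K}$ second, every $1$-morphism $\mathrm{G}$ acts on $\mathbf{M}$ by a block upper-triangular matrix
\begin{displaymath}
[\mathrm{G}]_{\oplus}^{\mathbf{M}}=\left(\begin{array}{c|c}[\mathrm{G}]_{\oplus}^{\mathbf{N}}&*\\\hline 0&[\mathrm{G}]_{\oplus}^{\mathbf{K}}\end{array}\right),
\end{displaymath}
where the diagonal blocks record the induced actions on the subquotients $\mathbf{N}$ and $\mathbf{K}$.

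First I would record the meaning of the two hypotheses on $\mathrm{F}$. Since $\mathcal{J}_{\mathbf{N}}$ is the apex of $\mathbf{N}$, the assumption $\mathrm{F}\not\leq_J\mathcal{J}_{\mathbf{N}}$ forces $\mathbf{N}(\mathrm{F})=0$, i.e.\ the top-left diagonal block $[\mathrm{F}]_{\oplus}^{\mathbf{N}}$ vanishes; likewise $\mathrm{F}\not\leq_J\mathcal{J}_{\mathbf{K}}$ gives $[\mathrm{F}]_{\oplus}^{\mathbf{K}}=0$. Indeed, an indecomposable $1$-morphism $\mathrm{G}$ acts nonzero on a transitive $2$-representation precisely when $\mathrm{G}$ does not lie in the upper set of cells annihilating it, and the apex is exactly the maximal cell surviving in the quotient $2$-category $\cC/\mathrm{Ann}_{\ccC}(-)$ by Lemma~\ref{lem7}; a cell not $\leq_J$-below the apex annihilates the representation. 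Consequently $[\mathrm{F}]_{\oplus}^{\mathbf{M}}$ has both diagonal blocks zero and is strictly upper-triangular of the form \eqref{eq1}. The same reasoning applies to \emph{every} indecomposable $1$-morphism in the cell $\mathcal{J}'$ of $\mathrm{F}$: since $\mathcal{J}'$ is a two-sided cell, the relations $\not\leq_J\mathcal{J}_{\mathbf{N}}$ and $\not\leq_J\mathcal{J}_{\mathbf{K}}$ hold for all of $\mathcal{J}'$ simultaneously, so every such $1$-morphism acts on $\mathbf{M}$ by a matrix of the form \eqref{eq1}.

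Now I would invoke idempotency of $\mathcal{J}'$, which is where the argument of Theorem~\ref{thm6} is reused verbatim. Idempotency supplies indecomposable $1$-morphisms $\mathrm{G}_1,\mathrm{G}_2,\mathrm{G}_3\in\mathcal{J}'$ with $\mathrm{G}_1$ a direct summand of $\mathrm{G}_2\circ\mathrm{G}_3$. On $\mathbf{M}$ the matrices $[\mathrm{G}_2]_{\oplus}^{\mathbf{M}}$ and $[\mathrm{G}_3]_{\oplus}^{\mathbf{M}}$ are both strictly upper-triangular of the form \eqref{eq1}, so their product is the zero matrix; hence $[\mathrm{G}_2\circ\mathrm{G}_3]_{\oplus}^{\mathbf{M}}=0$. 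But $[\mathrm{G}_1]_{\oplus}^{\mathbf{M}}$ is a non-negative summand of this zero matrix, forcing $\mathbf{M}(\mathrm{G}_1)=0$. On the other hand, the crucial point is that $\mathrm{F}\in\Theta$ decorates a genuine edge, so $\mathbf{M}(\mathrm{F})\neq 0$; the entire cell $\mathcal{J}'$ therefore does not annihilate $\mathbf{M}$, and in particular every indecomposable in $\mathcal{J}'$ must act nonzero (the off-diagonal blocks being nonzero is exactly what the presence of the edge guarantees, via the same Jordan--H\"older reasoning that underlies the diagram). This contradicts $\mathbf{M}(\mathrm{G}_1)=0$.

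The main obstacle is the last step: justifying that $\mathbf{M}(\mathrm{G})\neq 0$ for \emph{all} $\mathrm{G}\in\mathcal{J}'$, not merely for the specific $\mathrm{F}\in\Theta$. For $\mathrm{F}$ itself this is immediate from the definition of $\Theta$ (a decorating $1$-morphism sends some $X\in\rho$ to something with a summand in $\tau$, so its off-diagonal block is nonzero). The extension to all of $\mathcal{J}'$ requires the observation that $\mathrm{Ann}_{\ccC}(\mathbf{M})$ is a two-sided $2$-ideal, so the set of $1$-morphisms annihilating $\mathbf{M}$ is a union of two-sided cells; since $\mathrm{F}\in\mathcal{J}'$ is not annihilated, no element of $\mathcal{J}'$ is annihilated. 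This is precisely the mechanism already used in the first displayed-matrix argument in the proof of Theorem~\ref{thm6}, and I expect the write-up to follow that template closely, the only genuinely new input being the bookkeeping that the two apex hypotheses kill the two diagonal blocks separately.
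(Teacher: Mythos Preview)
Your proposal is correct and follows essentially the same argument as the paper: pass to block form, use the apex hypotheses to kill both diagonal blocks for every $\mathrm{G}\in\mathcal{J}'$, invoke idempotency to produce $\mathrm{G}_1$ as a summand of a composite whose matrix vanishes, and conclude via the fact that $\{\mathrm{G}:\mathbf{M}(\mathrm{G})=0\}$ is a union of two-sided cells. Two minor remarks: with the $\mathbf{N}$-block listed first the matrix is actually \emph{lower}-triangular (since $\mathbf{K}$ is the subrepresentation the off-diagonal contribution sits in the $(\mathbf{K},\mathbf{N})$-position), though this is immaterial to the argument; and the paper obtains $\mathbf{K}(\mathrm{F})=\mathbf{N}(\mathrm{F})=0$ by first passing to the simple transitive quotients and invoking Proposition~\ref{prop23}, whereas your direct appeal to Lemma~\ref{lem7} and the definition of the apex achieves the same thing slightly more economically.
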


\begin{proof}
Consider a short exact sequence of the form \eqref{eq201}.
Let $\mathbf{K}'$ and $\mathbf{N}'$ denote the simple transitive quotients of  $\mathbf{K}$ and $\mathbf{N}$,
respectively. Assume that $\mathrm{F}\not\leq_J \mathcal{J}_{\mathbf{K}}$ and 
$\mathrm{F}\not\leq_J \mathcal{J}_{\mathbf{N}}$. Then $\mathbf{K}'(\mathrm{F})=\mathbf{N}'(\mathrm{F})=0$
by the definition of the apex. This implies that $\mathbf{K}(\mathrm{F})=\mathbf{N}(\mathrm{F})=0$
by Proposition~\ref{prop23}. Therefore $\mathbf{K}(\mathrm{G})=\mathbf{N}(\mathrm{G})=0$ for any
$\mathrm{G}$ which is two-sided equivalent to $\mathrm{F}$. 

Since the two-sided cell containing $\mathrm{F}$ is idempotent, this cell contains $\mathrm{G}_1$, $\mathrm{G}_2$
and $\mathrm{G}_3$ such that $\mathrm{G}_3$ is isomorphic to a direct summand of $\mathrm{G}_1\circ \mathrm{G}_2$.
From the previous paragraph, for any $\mathrm{G}$ which is two-sided equivalent to $\mathrm{F}$, the matrix
of $\mathbf{M}(\mathrm{G})$, for an appropriately chosen ordering of indecomposable objects, has the form
\eqref{eq1}. Specializing this to $\mathrm{G}_1$, $\mathrm{G}_2$ and $\mathrm{G}_3$, we get that the matrix 
of $\mathrm{G}_3$ is zero. Therefore $\mathbf{M}(\mathrm{G}_3)=0$ and thus $\mathbf{M}(\mathrm{F})=0$ as
$\mathrm{G}_3$ is two-sided equivalent to $\mathrm{F}$. This implies the claim of the proposition.
\end{proof}

For weakly fiat categories, the above observation can be substantially strengthened.

\begin{proposition}\label{prop25}
Assume that $\cC$ is weakly fiat.
Let $\mathbf{K}$ and $\mathbf{N}$ be transitive $2$-representations of $\cC$. 
Denote by  $\mathcal{J}_{\mathbf{N}}$ the apex of $\mathbf{N}$ and by 
$\mathcal{J}_{\mathbf{K}}$ the apex of $\mathbf{K}$. Then, for any 
$\Theta\in \mathrm{Dext}(\mathbf{N},\mathbf{K})$ and any $\mathrm{F}\in \Theta$, we have 
$\mathrm{F}\leq_J \mathcal{J}_{\mathbf{K}}$.
\end{proposition}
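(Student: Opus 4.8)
The plan is to prove the stronger statement $\mathrm{F}\leq_J \mathcal{J}_{\mathbf{K}}$ by exploiting weak fiatness, which repairs the one-sided obstruction that prevented Proposition~\ref{prop24} from concluding the analogous bound. The key new tool is the adjunction: since $\cC$ is weakly fiat, every $1$-morphism $\mathrm{F}$ has an adjoint $\mathrm{F}^*$, and a morphism $\mathrm{F}\,X\tto Y$ (with $Y$ a summand of $\mathrm{F}\,X$) gives rise, by adjunction, to a nonzero morphism $X\to \mathrm{F}^*\,Y$, so that $X$ is a summand of $\mathrm{F}^*\,Y$ whenever $Y$ is a summand of $\mathrm{F}\,X$. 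In the setting of a short exact sequence \eqref{eq201}, I would fix $\mathrm{F}\in\Theta$, so by definition there exist $X\in\rho$ (corresponding to $\mathbf{N}$) and a summand $Y\in\tau$ (corresponding to $\mathbf{K}$) of $\mathrm{F}\,X$.

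First I would pin down the role of the apex of $\mathbf{K}$. Since $Y$ lies in $\mathbf{K}(\mathtt{i})$ for some $\mathtt{i}$, and $\mathbf{K}$ is transitive with apex $\mathcal{J}_{\mathbf{K}}$, Lemma~\ref{lem801} guarantees that applying $\mathcal{J}_{\mathbf{K}}$ to $Y$ recovers an additive generator of the $\mathbf{K}(\mathtt{j})$'s; more precisely, the apex is exactly the maximal two-sided cell surviving in $\cC_{\mathbf{K}}=\cC/\mathrm{Ann}_{\ccC}(\mathbf{K})$ by Lemma~\ref{lem7}. The crucial point is that $X$ (living in the quotient $\mathbf{N}$) becomes a summand of $\mathrm{F}^*\,Y$ inside $\mathbf{M}$, and I want to transport this relation into $\mathbf{K}$. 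Applying the weak antiautoequivalence $*$ turns the bound I seek, $\mathrm{F}\leq_J \mathcal{J}_{\mathbf{K}}$, into the equivalent statement $\mathrm{F}^*\leq_J \mathcal{J}_{\mathbf{K}}$ (the two-sided order is preserved up to the $*$-action, and one checks $\mathcal{J}_{\mathbf{K}}^{*}$ relates to $\mathcal{J}_{\mathbf{K}}$ appropriately), which is what the adjunction relation is poised to deliver.

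The main argument then runs as follows. Because $Y$ is a summand of $\mathrm{F}\,X$, adjunction yields that $X$ is a summand of $\mathrm{F}^*\,Y$, hence $\mathbf{M}(\mathrm{F}^*)\,Y\neq 0$; composing further, $\mathrm{F}\,\mathrm{F}^*\,Y$ contains $\mathrm{F}\,X$ and hence again contains $Y$ as a summand. Thus $Y$ is a summand of $(\mathrm{F}\circ\mathrm{F}^*)\,Y$, which shows that the indecomposable summands of $\mathrm{F}\circ\mathrm{F}^*$ act nontrivially on $\mathbf{K}$, so they do not lie in $\mathrm{Ann}_{\ccC}(\mathbf{K})$. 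By Lemma~\ref{lem7} the maximal surviving cell in $\cC_{\mathbf{K}}$ is $\mathcal{J}_{\mathbf{K}}$, so every summand of $\mathrm{F}\circ\mathrm{F}^*$ that survives in $\cC_{\mathbf{K}}$ satisfies $\leq_J\mathcal{J}_{\mathbf{K}}$. Standard cell combinatorics for (weakly) fiat categories then forces $\mathrm{F}$ itself into a cell bounded by $\mathcal{J}_{\mathbf{K}}$, since $\mathrm{F}$ and $\mathrm{F}\circ\mathrm{F}^*$ are two-sided comparable through the adjunction unit/counit being nonzero on $\mathbf{K}$.

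The step I expect to be the main obstacle is the last one: passing from ``a summand of $\mathrm{F}\circ\mathrm{F}^*$ is bounded by $\mathcal{J}_{\mathbf{K}}$'' to ``$\mathrm{F}$ is bounded by $\mathcal{J}_{\mathbf{K}}$''. This requires the weakly fiat analogue of the fact that $\mathrm{F}$ and $\mathrm{F}\,\mathrm{F}^*\mathrm{F}$ generate the same two-sided cell (equivalently, that $\mathrm{F}$ appears as a summand of $\mathrm{F}\circ\mathrm{F}^*\circ\mathrm{F}$ because the composite of unit and counit is a nonzero idempotent-like $2$-morphism, a consequence of the adjunction axioms). I would verify this nonvanishing using the $2$-morphisms witnessing the adjunction $(\mathrm{F},\mathrm{F}^*)$ directly in $\mathbf{M}$, taking care that the relevant evaluations remain nonzero on the object $Y\in\mathbf{K}$; once $\mathrm{F}$ is a summand of $\mathrm{F}\circ\mathrm{F}^*\circ\mathrm{F}$, two-sided comparability gives $\mathrm{F}\leq_J\mathcal{J}_{\mathbf{K}}$, completing the proof.
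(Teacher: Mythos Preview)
Your central claim --- that if $Y$ is a direct summand of $\mathrm{F}\,X$ then $X$ is a direct summand of $\mathrm{F}^*\,Y$ --- is false, and this is a genuine gap. Adjunction transports the split epimorphism $p:\mathrm{F}\,X\to Y$ to a \emph{nonzero} morphism $\tilde p:X\to \mathrm{F}^*\,Y$, but there is no reason for $\tilde p$ to be a split monomorphism in a Krull--Schmidt category. Your subsequent chain (``hence $\mathrm{F}\,X$ is a summand of $\mathrm{F}\,\mathrm{F}^*\,Y$, hence $Y$ is a summand of $\mathrm{F}\,\mathrm{F}^*\,Y$'') therefore does not follow as written.

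That said, the conclusion $Y$ is a summand of $(\mathrm{F}\circ\mathrm{F}^*)\,Y$ \emph{is} correct, and your approach can be repaired without the false intermediate step. Let $i:Y\to\mathrm{F}\,X$ and $p:\mathrm{F}\,X\to Y$ with $p\circ i=\mathrm{id}_Y$, and let $\tilde p=\mathrm{F}^*(p)\circ\eta_X$ be the adjunct. By naturality of the counit and the triangle identity one computes $\epsilon_Y\circ\mathrm{F}(\tilde p)=p$, so $\epsilon_Y\circ\bigl(\mathrm{F}(\tilde p)\circ i\bigr)=\mathrm{id}_Y$, exhibiting $Y$ as a summand of $\mathrm{F}\,\mathrm{F}^*\,Y$. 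Since $Y\in\mathbf{K}$ and $\mathbf{K}$ is a sub-$2$-representation, some indecomposable summand $\mathrm{G}$ of $\mathrm{F}\circ\mathrm{F}^*$ satisfies $\mathbf{K}(\mathrm{G})\neq 0$, whence $\mathrm{G}\leq_J\mathcal{J}_{\mathbf{K}}$; and automatically $\mathrm{F}\leq_J\mathrm{G}$ because $\mathrm{G}$ is a summand of $\mathrm{F}\circ\mathrm{F}^*$. This last implication is immediate from the definition of $\leq_J$, so the step you flagged as ``the main obstacle'' is in fact trivial; the real obstacle was the unjustified splitting claim for $\tilde p$.

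For comparison, the paper argues differently. It passes to the abelianization $\overline{\mathbf{M}}$ and uses the indecomposable projective $P_X$ and the simple $L_Y$: adjunction in the abelian setting yields $\mathrm{Hom}(P_X,\overline{\mathbf{M}}(\mathrm{F}^*)\,L_Y)\neq 0$, hence $\overline{\mathbf{M}}(\mathrm{F}^*)\,L_Y\neq 0$, hence $\mathrm{F}^*\leq_J\mathcal{J}_{\mathbf{K}}$. The conclusion $\mathrm{F}\leq_J\mathcal{J}_{\mathbf{K}}$ then requires a separate lemma, proved via Duflo involutions, that $\mathrm{F}\sim_J\mathrm{F}^*$ in any weakly fiat $2$-category. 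Your (corrected) route stays entirely in the additive world and bypasses both abelianization and the $\mathrm{F}\sim_J\mathrm{F}^*$ lemma, at the cost of invoking the triangle identity explicitly; the paper's route is more conceptual in that it isolates the general fact $\mathrm{F}\sim_J\mathrm{F}^*$.
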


\begin{proof}
Consider a short exact sequence of the form \eqref{eq201}.  Assume that there is
an indecomposable object $X$ in some $\mathbf{N}(\mathtt{i})$, and indecomposable
$1$-morphism $\mathrm{F}\in\cC$ such that $\mathbf{M}(\mathrm{F})\, X$ has an
indecomposable summand $Y$ in some $\mathbf{K}(\mathtt{j})$. We consider the abelianization
$\overline{\mathbf{M}}$, an indecomposable projective object $P_X$ in
$\overline{\mathbf{M}}(\mathtt{i})$ which corresponds to $X$ and a simple object
$L_Y$ in $\overline{\mathbf{M}}(\mathtt{j})$ which corresponds to $Y$. Then we have
\begin{displaymath}
0\neq 
\mathrm{Hom}_{\overline{\mathbf{M}}(\mathtt{j})}(\overline{\mathbf{M}}(\mathrm{F})\, P_X,L_Y)\cong
\mathrm{Hom}_{\overline{\mathbf{M}}(\mathtt{i})}(P_X,\overline{\mathbf{M}}(\mathrm{F}^*)\, L_Y)
\end{displaymath}
by adjunction. Therefore $\overline{\mathbf{M}}(\mathrm{F}^*)\, L_Y\neq 0$, which implies
$\mathrm{F}^*\leq_J \mathcal{J}_{\mathbf{K}}$ by the definition of the apex of $\mathbf{K}$.
The claim of the proposition now follows from the following lemma.

\begin{lemma}\label{lem25-1}
Let $\cC$ be a weakly fiat $2$-category. Then, for any $1$-morphism $\mathrm{F}$ in $\cC$, we have 
$\mathrm{F}\sim_{J}\mathrm{F}^*$.
\end{lemma}

\begin{proof}
Let $\mathcal{L}$ be the left cell containing $\mathrm{F}$ and $\mathcal{J}$ be the two-sided 
cell containing $\mathrm{F}$. Let $\mathrm{G}$ be the Duflo involution in $\mathcal{L}$,
see \cite[Section~7]{MM6}. Then, by \cite[Section~7]{MM6}, we have $\mathrm{G}^*\in \mathcal{L}$,
in particular, $\mathrm{G}^*\sim_J \mathrm{F}$. Since $\mathrm{F}\sim_L \mathrm{G}$, we have 
$\mathrm{F}^*\sim_R \mathrm{G}^*$ and thus we have $\mathrm{F}\sim_{J}\mathrm{F}^*$.
\end{proof}
\end{proof}

\begin{corollary}\label{cor26}
Assume that $\cC$ is weakly fiat.
Let $\mathbf{K}$ and $\mathbf{N}$ be transitive $2$-representations of $\cC$. 
Denote by  $\mathcal{J}_{\mathbf{N}}$ the apex of $\mathbf{N}$ and by 
$\mathcal{J}_{\mathbf{K}}$ the apex of $\mathbf{K}$. Then, for any non-empty  
$\Theta\in \mathrm{Dext}(\mathbf{N},\mathbf{K})$, we have $\Theta\cap \mathcal{J}_{\mathbf{K}}\neq \varnothing$.
\end{corollary}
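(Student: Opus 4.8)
The plan is to deduce Corollary~\ref{cor26} directly from Proposition~\ref{prop25} together with the fact that the decoration set of any edge is non-empty by construction. First I would take a non-empty $\Theta\in\mathrm{Dext}(\mathbf{N},\mathbf{K})$. By the very definition of a discrete extension in Subsection~\ref{s4.2}, a non-empty $\Theta$ can only arise from a short exact sequence \eqref{eq201} whose diagram is the connected one (the left diagram), and in that case $\Theta=\Theta_{\tau,\rho}$ is the decoration of the unique edge, which is non-empty. Thus there is at least one indecomposable $1$-morphism $\mathrm{F}\in\Theta$.

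Next I would apply Proposition~\ref{prop25} to this $\mathrm{F}$. Since $\cC$ is weakly fiat and $\mathrm{F}\in\Theta$, the proposition yields $\mathrm{F}\leq_J\mathcal{J}_{\mathbf{K}}$. The remaining point is to upgrade the inequality $\mathrm{F}\leq_J\mathcal{J}_{\mathbf{K}}$ to membership $\mathrm{F}\in\mathcal{J}_{\mathbf{K}}$, so that $\mathrm{F}\in\Theta\cap\mathcal{J}_{\mathbf{K}}$ and the intersection is non-empty. The expected main obstacle is precisely this step: a priori $\mathrm{F}\leq_J\mathcal{J}_{\mathbf{K}}$ allows $\mathrm{F}$ to lie in a two-sided cell strictly smaller than $\mathcal{J}_{\mathbf{K}}$. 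To rule this out I would revisit the proof of Proposition~\ref{prop25}: there $\mathrm{F}$ decorates the edge because $\mathbf{M}(\mathrm{F})\,X$ has an indecomposable summand $Y$ lying in $\mathbf{K}(\mathtt{j})$, where $X$ lies in (the $\mathbf{N}$-part of) $\mathbf{M}(\mathtt{i})$; equivalently $\overline{\mathbf{M}}(\mathrm{F}^*)\,L_Y\neq 0$. Since $Y\in\mathbf{K}(\mathtt{j})$ and $\mathbf{K}$ is transitive with apex $\mathcal{J}_{\mathbf{K}}$, the object $L_Y$ is an object of the simple transitive quotient $\mathbf{K}'$, and non-vanishing of $\overline{\mathbf{M}}(\mathrm{F}^*)$ on $L_Y$ forces $\mathrm{F}^*$ to act non-trivially on $\mathbf{K}'$; by the definition of the apex as the \emph{unique maximal} two-sided cell not annihilating $\mathbf{K}'$ (Lemma~\ref{lem7}), this gives $\mathrm{F}^*\in\mathcal{J}_{\mathbf{K}}$, whence $\mathrm{F}\in\mathcal{J}_{\mathbf{K}}$ by Lemma~\ref{lem25-1}.

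Alternatively, if one only wants the weaker statement literally as phrased, I would argue that even the inequality version suffices once combined with transitivity of $\mathbf{K}$: the apex $\mathcal{J}_{\mathbf{K}}$ is idempotent by Lemma~\ref{lem7}, so it is enough to know that some two-sided equivalent of $\mathrm{F}$ is not annihilated by $\mathbf{K}$. I would therefore phrase the corollary's proof as: \emph{choose any $\mathrm{F}\in\Theta$ (which exists since $\Theta\neq\varnothing$); by the proof of Proposition~\ref{prop25}, $\mathrm{F}^*$ does not annihilate the simple transitive quotient $\mathbf{K}'$ of $\mathbf{K}$, hence $\mathrm{F}^*\sim_J$ a morphism in the apex $\mathcal{J}_{\mathbf{K}}$ of $\mathbf{K}'=\mathbf{K}$ (Proposition~\ref{prop23}); by Lemma~\ref{lem25-1}, $\mathrm{F}\sim_J\mathrm{F}^*$, so $\mathrm{F}\in\mathcal{J}_{\mathbf{K}}$, giving $\mathrm{F}\in\Theta\cap\mathcal{J}_{\mathbf{K}}$.}

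In summary, the corollary is essentially a repackaging of Proposition~\ref{prop25}, and the only genuine content beyond quoting that proposition is sharpening $\leq_J$ to exact cell membership; I expect this sharpening to be the crux, and I would resolve it by using the maximality built into the definition of the apex (Lemma~\ref{lem7}) rather than by any new computation.
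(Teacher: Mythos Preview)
Your argument contains a genuine gap at the step where you ``sharpen'' $\mathrm{F}^*\leq_J\mathcal{J}_{\mathbf{K}}$ to $\mathrm{F}^*\in\mathcal{J}_{\mathbf{K}}$. The fact that $\overline{\mathbf{M}}(\mathrm{F}^*)\,L_Y\neq 0$ only tells you that $\mathrm{F}^*$ survives in $\cC_{\mathbf{K}'}$, i.e.\ that the two-sided cell of $\mathrm{F}^*$ is $\leq_J\mathcal{J}_{\mathbf{K}}$; Lemma~\ref{lem7} asserts uniqueness of the \emph{maximal} surviving cell, not that every surviving cell equals the apex. (For instance, identity $1$-morphisms never annihilate anything but are $J$-minimal.) So the inference ``$\mathrm{F}^*$ does not annihilate $\mathbf{K}'$, hence $\mathrm{F}^*\in\mathcal{J}_{\mathbf{K}}$'' is invalid.

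In fact the stronger statement you are trying to prove --- that every $\mathrm{F}\in\Theta$ lies in $\mathcal{J}_{\mathbf{K}}$ --- is false. In the paper's own Theorem~\ref{thm64}\eqref{thm64.3} one has $\Theta=\{\theta_s,\theta_t,\theta_{st},\theta_{ts},\theta_{sts}\}$ while $\mathcal{J}_{\mathbf{K}}=\{\theta_{sts}\}$; so $\theta_s\in\Theta$ but $\theta_s\notin\mathcal{J}_{\mathbf{K}}$. The corollary only claims $\Theta\cap\mathcal{J}_{\mathbf{K}}\neq\varnothing$, and the paper's proof obtains this not by analysing a given $\mathrm{F}\in\Theta$ more closely, but by \emph{producing a new element} of $\Theta$: one composes $\mathrm{F}$ with $\mathrm{G}=\mathrm{F}(\mathcal{J}_{\mathbf{K}})$, uses Lemma~\ref{lem801} to see $\mathrm{G}\,Y\neq 0$ and hence that some indecomposable summand $\mathrm{H}$ of $\mathrm{G}\circ\mathrm{F}$ lies in $\Theta$, and then observes that such $\mathrm{H}$ satisfies both $\mathrm{H}\geq_J\mathcal{J}_{\mathbf{K}}$ (as a summand of $\mathrm{G}\circ\mathrm{F}$) and $\mathrm{H}\leq_J\mathcal{J}_{\mathbf{K}}$ (by Proposition~\ref{prop25}), forcing $\mathrm{H}\in\mathcal{J}_{\mathbf{K}}$. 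This composition step is the idea your argument is missing.
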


\begin{proof}
Consider a short exact sequence of the form \eqref{eq201}.  Assume that there is
an indecomposable object $X$ in some $\mathbf{N}(\mathtt{i})$, and indecomposable
$1$-morphism $\mathrm{F}\in\Theta$ such that $\mathbf{M}(\mathrm{F})\, X$ has an
indecomposable summand $Y$ in some $\mathbf{K}(\mathtt{j})$. Let $\mathrm{G}$
be the direct sum of all $1$-morphisms in $\mathcal{J}_{\mathbf{K}}$. 
Since  $\mathcal{J}_{\mathbf{K}}$ is the apex of $\mathbf{K}$ and the latter 
is a transitive $2$-representation, from Lemma~\ref{lem801} it follows that $\mathrm{G}\, Y\neq 0$.
Hence $\Theta$ contains some direct summand of $\mathrm{G}\circ \mathrm{F}$. Since any such direct
summand must be bigger than or equal to $\mathcal{J}_{\mathbf{K}}$ in the two-sided order, 
the claim follows immediately from Proposition~\ref{prop25}.
\end{proof}

We also note that it is clear from the definitions that identity $1$-morphisms do not belong to 
$\mathrm{Dext}(\mathbf{N},\mathbf{K})$ for any transitive $2$-representations
$\mathbf{N}$ and $\mathbf{K}$.

\section{Discrete extensions for the $2$-category $\cC_A$}\label{s5}

\subsection{Construction of extensions between cell modules}\label{s5.1}

As in Example~\ref{ex2}, we consider a basic, connected, non-simple, finite dimensional  
$\Bbbk$-algebra $A$. Let $\mathcal{C}$ be a small subcategory of $A\text{-}\mathrm{mod}$
such that the inclusion functor to $A\text{-}\mathrm{mod}$ is an equivalence. Consider 
the associated $2$-category $\cC_A$ (with unique object $\mathtt{i}$)
of {\em projective functors} on $\mathcal{C}$ as in 
\cite[Subsection~7.3]{MM1}. Assume that 
\begin{displaymath}
1=e_1+e_2+\dots+e_n
\end{displaymath}
is a primitive decomposition of the identity $1\in A$. 
For $i\in\{1,2,\dots,n\}$, we denote by $L_i$ a simple object in $\mathcal{C}$ corresponding to $e_i$ 
and by $P_i$ some fixed indecomposable projective cover of $L_i$ in $\mathcal{C}$. We additionally assume
that simple $A$-modules are not projective (this is the case, for instance, if $A$ is as above and,
additionally, self-injective).

Consider the stable category $\underline{\mathcal{C}}$. Fix an indecomposable object $M\in\mathcal{C}$ such that 
$\mathrm{End}_{\underline{\mathcal{C}}}(M)\cong \Bbbk$. Let $S(M)\subset\{1,2,\dots,n\}$ be the set of all
$i$ such that $[M:L_i]\neq 0$. We will call $S(M)$ the {\em signature} of $M$.
Denote by $\mathcal{X}=\mathcal{X}_M$ the additive closure 
(in $\mathcal{C}$) of $M$ together with all projective objects in $\mathcal{C}$.
By construction, $\mathcal{X}$ is an finitary $\Bbbk$-linear category and is equipped with the structure of 
a $2$-representation of $\cC_A$ by restricting the corresponding structure from $\mathcal{C}$. 
We denote this $2$-representation of $\cC_A$ by $\mathbf{M}_M$.

Denote by $\mathbf{K}_M$ the $2$-subrepresentation of $\mathbf{M}_M$ given by restricting the action of 
$\cC_A$ to the full subcategory of $\mathcal{X}$ consisting of all projective objects in $\mathcal{C}$.
Let $\mathbf{N}_M$ be the quotient of $\mathbf{M}_M$ modulo the ideal generated by identity morphisms for
all objects in $\mathbf{K}_M$. Then we have a short exact sequence of $2$-representations of $\cC_A$ as follows:
\begin{displaymath}
0\to\mathbf{K}_M\to\mathbf{M}_M\to\mathbf{N}_M\to 0.
\end{displaymath}

Denote by $\mathcal{L}_0$ the left cell of $\cC_A$ containing the identity $1$-morphism. 
For every $i=1,2,\dots,n$, denote by $\mathcal{L}_i$ the left cell of $\cC_A$ containing a projective
functor corresponding to tensoring with $Ae_i\otimes e_iA$.
Set $\mathcal{J}_0:=\mathcal{L}_0$ and $\mathcal{J}:=\mathcal{L}_1\cup\mathcal{L}_2\cup\dots\cup\mathcal{L}_n$.
Then the two-sided cells
$\mathcal{J}_0$ and $\mathcal{J}$ are the only two-sided cells of $\cC_A$ and we have
$\mathcal{J}_0<_J\mathcal{J}$.

\begin{proposition}\label{lemma31}
{\hspace{2mm}}
\begin{enumerate}[$($i$)$]
\item\label{lemma31.1} The $2$-representations  $\mathbf{K}_M$ and $\mathbf{C}_{\mathcal{L}_i}$ are equivalent,
for each $i\in\{1,2,\dots,n\}$.
\item\label{lemma31.2} The $2$-representations  $\mathbf{N}_M$ and $\mathbf{C}_{\mathcal{L}_0}$ are equivalent.
\item\label{lemma31.3} The discrete extension corresponding to $\mathbf{M}_M$ is the union of
all $\mathcal{L}_j$, where $j\in S(M)$.
\end{enumerate}
\end{proposition}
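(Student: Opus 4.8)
The plan is to analyze the three parts in order, since each builds on the explicit structure of $\cC_A$ acting on $\mathcal{X}_M$. The core object is the projective functor category $\cC_A$, whose nonidentity indecomposable $1$-morphisms correspond to tensoring with bimodules $Ae_j\otimes_{\Bbbk}e_iA$. I would first record how these functors act on the generators of $\mathcal{X}_M$, namely the projectives $P_1,\dots,P_n$ and the chosen object $M$: tensoring $Ae_j\otimes e_iA$ with $P_k = Ae_k$ yields $(Ae_j)^{\oplus \dim(e_iA e_k)}$, and tensoring with $M$ yields $(Ae_j)^{\oplus\dim(e_iM)} = (Ae_j)^{\oplus[M:L_i]}$. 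The key arithmetic observation is that $e_i M \neq 0$ precisely when $i\in S(M)$, which is exactly the content that will drive part~\eqref{lemma31.3}.

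For part~\eqref{lemma31.1}, the plan is to identify $\mathbf{K}_M$, whose underlying category is the additive closure of $\{P_1,\dots,P_n\}$, with a cell $2$-representation $\mathbf{C}_{\mathcal{L}_i}$. First I would observe that the action of any projective functor sends every projective to a direct sum of projectives, so $\mathbf{K}_M$ is genuinely a $2$-subrepresentation, and moreover it is transitive since any $P_k$ can be reached from any $P_j$ by applying an appropriate $Ae_k\otimes e_jA$. I would then invoke the classification of transitive $2$-representations with apex $\mathcal{J}$: since $\mathcal{J}$ is the unique maximal (nonidentity) two-sided cell and $\mathbf{K}_M$ realizes the regular-type action on the projectives, its rank and action matrices match those of the cell $2$-representation $\mathbf{C}_{\mathcal{L}_i}$ for each left cell $\mathcal{L}_i$ inside $\mathcal{J}$. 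The equivalence for every $i$ reflects the fact that all the $\mathcal{L}_i$ inside the strongly regular cell $\mathcal{J}$ give equivalent cell $2$-representations, so it suffices to match one of them explicitly and cite the known equivalence among them from \cite{MM2}.

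For part~\eqref{lemma31.2}, the plan is to compute the quotient $\mathbf{N}_M$, obtained from $\mathbf{M}_M$ by killing the identities on all projectives. In this quotient the only surviving indecomposable is the image of $M$, and the crucial point is that every nonidentity projective functor factors through projectives — concretely $Ae_j\otimes e_iA$ applied to $M$ lands in the additive closure of projectives — so its image in $\mathbf{N}_M$ is zero. Hence only the identity $1$-morphism acts nontrivially on $\mathbf{N}_M$, which forces $\mathbf{N}_M$ to be the cell $2$-representation $\mathbf{C}_{\mathcal{L}_0}$ of the trivial cell, a one-object category with only the identity acting. I would verify transitivity (trivially, as there is one indecomposable) and that the apex is $\mathcal{J}_0$, matching $\mathbf{C}_{\mathcal{L}_0}$.

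Finally, for part~\eqref{lemma31.3}, the plan is to apply the definition of the decorated edge $\Theta_{\tau,\rho}$ directly, where $\rho$ corresponds to $\mathbf{N}_M$ (the class of $M$) and $\tau$ to $\mathbf{K}_M$ (the projectives). An indecomposable $1$-morphism $\mathrm{F} = Ae_j\otimes e_iA$ belongs to the discrete extension exactly when $\mathrm{F}\,M$ contains some projective $P_j=Ae_j$ as a summand, and from the action computation this multiplicity is $[M:L_i]$, which is nonzero iff $i\in S(M)$. Thus $\mathrm{F}$ lies in $\Theta$ iff $i\in S(M)$, and since $\mathcal{L}_i$ consists precisely of the functors $Ae_j\otimes e_iA$ as $j$ ranges over $\{1,\dots,n\}$, the set $\Theta$ is exactly $\bigcup_{j\in S(M)}\mathcal{L}_j$. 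The main obstacle I anticipate is being careful in part~\eqref{lemma31.1} to justify the equivalence with $\mathbf{C}_{\mathcal{L}_i}$ rigorously rather than merely matching decategorified action matrices; this requires either constructing an explicit $2$-natural transformation or appealing to the uniqueness of the transitive $2$-representation realizing the regular action on the $\mathcal{J}$-cell, and I would lean on the cell-module machinery and strong regularity of $\mathcal{J}$ established in \cite{MM2} to close this gap.
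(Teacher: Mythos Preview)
Your argument for part~\eqref{lemma31.2} has a genuine gap: you never use the hypothesis $\mathrm{End}_{\underline{\mathcal{C}}}(M)\cong\Bbbk$, and without it the conclusion fails. Knowing that every nonidentity indecomposable $1$-morphism acts as zero on $\mathbf{N}_M$ only tells you that the apex is $\mathcal{J}_0$; it does not pin down the underlying category. The endomorphism algebra of the image of $M$ in $\mathbf{N}_M(\mathtt{i})$ is precisely the stable endomorphism algebra $\mathrm{End}_{\underline{\mathcal{C}}}(M)$, since the ideal you factor out is exactly the one generated by morphisms factoring through projectives. If this stable endomorphism algebra were, say, $\Bbbk[x]/(x^2)$, then $\mathbf{N}_M$ would not be equivalent to $\mathbf{C}_{\mathcal{L}_0}$. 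The paper uses the hypothesis at exactly this point, and then builds the equivalence explicitly via the universal property of $\mathbf{P}_{\mathtt{i}}$ from \cite[Lemma~3]{MM3}: send $\mathbbm{1}_{\mathtt{i}}$ to the indecomposable object of $\mathbf{N}_M(\mathtt{i})$, and check that the resulting $2$-natural transformation factors through $\mathbf{C}_{\mathcal{L}_0}$ and is an equivalence.

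For part~\eqref{lemma31.1}, your strategy of matching action matrices and then invoking a classification or uniqueness theorem is heavier than necessary and, as you yourself note, not quite closed. The paper again uses the universal property of $\mathbf{P}_{\mathtt{i}}$: send $\mathbbm{1}_{\mathtt{i}}$ to a simple object $L_i$ in the defining $2$-representation $\mathcal{C}$, observe that $1$-morphisms in $\mathcal{J}$ land on projectives, and deduce that the induced map identifies $\mathbf{C}_{\mathcal{L}_i}$ with the $2$-action on projectives (cf.\ \cite[Subsection~6.5]{MM1}). This is a direct construction of the equivalence rather than an appeal to any classification, and it works uniformly for each $i$ without needing strong regularity. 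Your part~\eqref{lemma31.3} is essentially the paper's argument.
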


\begin{proof}
By construction, $\mathbf{N}_M(\mathtt{i})$ is equivalent to $\Bbbk\text{-}\mathrm{mod}$, moreover,
$\mathbf{N}_M(\mathrm{F})=0$ for any indecomposable $1$-morphism $\mathrm{F}\in\cC_A$ which is not 
isomorphic to the identity. Mapping the identity $1$-morphism in $\cC_A$ to an indecomposable 
object of $\mathbf{N}_M(\mathtt{i})$, defines, by the universal property \cite[Lemma~3]{MM3}, a $2$-natural
transformation from $\mathbf{P}_{\mathtt{i}}$ to $\mathbf{N}_M$. This $2$-natural transformation factors through 
$\mathbf{C}_{\mathcal{L}_0}$ and induces an equivalence between the latter and $\mathbf{N}_M$
because $\mathrm{End}_{\underline{\mathcal{C}}}(M)\cong \Bbbk$ by our assumptions,
proving claim~\eqref{lemma31.2}.

Similarly to the above, mapping the identity $1$-morphism in $\cC_A$ to any simple object in $\mathcal{C}$, 
defines, by the universal property \cite[Lemma~3]{MM3}, a $2$-natural transformation from 
$\mathbf{P}_{\mathtt{i}}$ to the defining $2$-representation of $\cC_A$ (i.e. the natural $2$-action of 
$\cC_A$ on $\mathcal{C}$). This $2$-natural transformation sends $1$-morphisms in $\mathcal{J}$ to
projective objects in $\mathcal{C}$ and induces an equivalence between $\mathbf{C}_{\mathcal{L}_i}$ and the
$2$-representation of $\cC_A$ on the category of projective objects in $\mathcal{C}$
(cf. \cite[Subsection~6.5]{MM1}). The latter is equivalent to $\mathbf{K}_M$ by construction. 
This proves claim~\eqref{lemma31.1}.

By construction, the object $M$ has simple subquotients $L_j$, where $j\in S(M)$.  
Now the assertion of claim~\eqref{lemma31.3} follows by noting that, 
for $s,t,v\in\{1,2,\dots,n\}$, we have (up to isomorphism) the following:
\begin{displaymath}
Ae_s\otimes e_t A\otimes_A L_v= 
\begin{cases}
0, &  v\neq t;\\
P_s, & v=t.
\end{cases}
\end{displaymath}
This completes the proof.
\end{proof}

\subsection{Discrete extensions between cell $2$-representations for $\cC_A$}\label{s5.2}

For any subset $S\subset \{1,2,\dots,n\}$, set
\begin{displaymath}
E_S:=\bigcup_{j\in S}\mathcal{L}_j .
\end{displaymath}
Denote by $\mathcal{E}_A$ the set of all  $E_S$, where $S$ runs through the set of all non-empty 
subsets of $\{1,2,\dots,n\}$.

\begin{theorem}\label{thm32}
{\hspace{2mm}}
\begin{enumerate}[$($i$)$]
\item\label{thm32.1} We have $\mathrm{Dext}(\mathbf{C}_{\mathcal{L}_0},\mathbf{C}_{\mathcal{L}_0})=\varnothing$.
\item\label{thm32.2} We have $\mathrm{Dext}(\mathbf{C}_{\mathcal{L}_1},\mathbf{C}_{\mathcal{L}_1})=\varnothing$.
\item\label{thm32.3} We have $\mathrm{Dext}(\mathbf{C}_{\mathcal{L}_1},\mathbf{C}_{\mathcal{L}_0})=\varnothing$
provided that $A$ is self-injective.
\item\label{thm32.4} We have 
$\mathrm{Dext}(\mathbf{C}_{\mathcal{L}_0},\mathbf{C}_{\mathcal{L}_1})\subset \mathcal{E}_A$, moreover,
for any $M\in\mathcal{C}$ such that $\mathrm{End}_{\underline{\mathcal{C}}}(M)\cong \Bbbk$,
we have $E_{S(M)}\in \mathrm{Dext}(\mathbf{C}_{\mathcal{L}_0},\mathbf{C}_{\mathcal{L}_1})$.
\end{enumerate}
\end{theorem}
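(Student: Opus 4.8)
The plan is to treat the four parts separately, the first three being short applications of the general theory developed above and the fourth requiring an explicit analysis of the action of projective functors. Throughout write $\mathrm{F}_{s,t}$ for the indecomposable $1$-morphism given by tensoring with $Ae_s\otimes e_tA$, so that $\mathcal{L}_t=\{\mathrm{F}_{s,t}\,:\,s=1,\dots,n\}$.

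For parts \ref{thm32.1} and \ref{thm32.2} I would argue that both $\mathbf{C}_{\mathcal{L}_0}$ and $\mathbf{C}_{\mathcal{L}_1}$ are simple transitive, with apexes $\mathcal{J}_0$ and $\mathcal{J}$ respectively (by Example~\ref{ex7-1}, since $\mathcal{L}_0$ and $\mathcal{L}_1$ are maximal left cells in their idempotent two-sided cells). As recorded in Example~\ref{ex5}, every two-sided cell of $\cC_A$ is idempotent and both $\mathcal{J}_0$ and $\mathcal{J}$ are good. Hence Theorem~\ref{thm6}, in the reformulation $\mathrm{Dext}(\mathbf{N},\mathbf{N})=\varnothing$ noted at the end of Subsection~\ref{s4.2}, applies verbatim with $\mathbf{N}=\mathbf{C}_{\mathcal{L}_0}$ and with $\mathbf{N}=\mathbf{C}_{\mathcal{L}_1}$, yielding the two claims.

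For part \ref{thm32.3} I would use that $\cC_A$ is weakly fiat (indeed fiat) when $A$ is self-injective, cf.\ \cite{MM1}. In $\mathrm{Dext}(\mathbf{C}_{\mathcal{L}_1},\mathbf{C}_{\mathcal{L}_0})$ the subrepresentation is $\mathbf{K}\simeq\mathbf{C}_{\mathcal{L}_0}$ with apex $\mathcal{J}_{\mathbf{K}}=\mathcal{J}_0$. By Proposition~\ref{prop25}, any $\mathrm{F}$ in a discrete extension $\Theta$ satisfies $\mathrm{F}\leq_J\mathcal{J}_0$; since $\mathcal{J}_0$ is the minimal two-sided cell (we have $\mathcal{J}_0<_J\mathcal{J}$), this forces $\mathrm{F}\in\mathcal{J}_0$, i.e.\ $\mathrm{F}$ is the identity $1$-morphism. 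As the identity never lies in a discrete extension, no non-empty $\Theta$ can occur, so the set is empty. The ``moreover'' assertion of part \ref{thm32.4} is then immediate from Proposition~\ref{lemma31}: for any $M$ with $\mathrm{End}_{\underline{\mathcal{C}}}(M)\cong\Bbbk$, the sequence $0\to\mathbf{K}_M\to\mathbf{M}_M\to\mathbf{N}_M\to 0$ has $\mathbf{K}_M\simeq\mathbf{C}_{\mathcal{L}_1}$, $\mathbf{N}_M\simeq\mathbf{C}_{\mathcal{L}_0}$ and discrete extension $E_{S(M)}$, which is non-empty.

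The substantive part is the inclusion $\mathrm{Dext}(\mathbf{C}_{\mathcal{L}_0},\mathbf{C}_{\mathcal{L}_1})\subset\mathcal{E}_A$. Given any short exact sequence \eqref{eq201} with $\mathbf{K}\simeq\mathbf{C}_{\mathcal{L}_1}$ (identified with the projective action, indecomposables $P_1,\dots,P_n$) and $\mathbf{N}\simeq\mathbf{C}_{\mathcal{L}_0}$, the class $\rho$ consists of a single indecomposable $X_0$, and $\Theta=\{\mathrm{F}\in\mathcal{J}\,:\,\mathbf{M}(\mathrm{F})\,X_0\neq 0\}$, since each such $\mathbf{M}(\mathrm{F})\,X_0$ is killed in $\mathbf{N}$ and hence projective. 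I would then exploit the composition isomorphisms $\mathrm{F}_{s',s}\circ\mathrm{F}_{s,t}\cong\mathrm{F}_{s',t}^{\oplus\dim e_sAe_s}$. Taking $s'=s$ shows that $\mathrm{F}_{s,t}\,X_0$ is a summand of $\mathrm{F}_{s,s}(\mathrm{F}_{s,t}\,X_0)$, and since $\mathrm{F}_{s,s}$ sends projectives to multiples of $P_s$, this forces $\mathbf{M}(\mathrm{F}_{s,t})\,X_0\cong P_s^{\oplus m_{s,t}}$. Applying the same isomorphism for general $s'$ and comparing multiplicities of $P_{s'}$ on both sides gives $m_{s,t}\dim e_sAe_s=m_{s',t}\dim e_sAe_s$; as $\dim e_sAe_s\geq 1$, we conclude $m_{s,t}=m_{s',t}$, so membership $\mathrm{F}_{s,t}\in\Theta$ depends only on $t$. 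Therefore $\Theta=\bigcup_{t\in S}\mathcal{L}_t=E_S$ with $S=\{t\,:\,m_{s,t}\neq 0\}$ non-empty, so $\Theta\in\mathcal{E}_A$. The main obstacle is precisely this last step: one must establish that $\mathbf{M}(\mathrm{F}_{s,t})\,X_0$ is a multiple of $P_s$ and that its vanishing is independent of $s$ \emph{without} presupposing that $\mathbf{M}$ is realized inside $A\text{-}\mathrm{mod}$; the two composition relations reduce this entirely to the positivity $\dim e_sAe_s\geq 1$, which holds because $e_s$ is primitive.
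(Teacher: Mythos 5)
Your proposal is correct and follows essentially the same route as the paper: parts (i)--(iii) via the general results (Theorem~\ref{thm6} and Propositions~\ref{prop24}, \ref{prop25}, together with the remark that identities never occur in discrete extensions), and part (iv) by combining the composition rules \eqref{eq6} and \eqref{eq7} with Krull--Schmidt to conclude that $\Theta$ is a union of left cells. The only cosmetic differences are that the paper proves (i) by a direct idempotency argument and cites \cite[Lemma~9]{MM6} for (ii) (Theorem~\ref{thm6}, which you invoke, is exactly the paper's generalization of that lemma), and in (iv) the paper applies $\mathrm{F}_{ij}$ to whatever summand $P_j$ of $\mathrm{F}_{st}\,X_0$ is observed, rather than first pinning down $\mathrm{F}_{st}\,X_0\cong P_s^{\oplus m_{s,t}}$ with $m_{s,t}$ independent of $s$ as you do.
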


\begin{proof}
Claim~\eqref{thm32.1} follows directly from the fact that $\mathcal{J}$ is idempotent and
the only indecomposable $1$-morphism which does not annihilate $\mathbf{C}_{\mathcal{L}_0}$ is the
identity $1$-morphism (up to isomorphism). Claim~\eqref{thm32.2} follows from \cite[Lemma~9]{MM6}
(note that the proof of \cite[Lemma~9]{MM6} does not use the general assumption of \cite{MM6}
that $\cC_A$ is weakly fiat, cf. Theorem~\ref{thm6}). In case $A$ is self-injective, $\cC_A$ is weakly fiat, see 
\cite[Subsection~2.8]{MM6}. Therefore claim~\eqref{thm32.3} follows from Propositions~\ref{prop24} and
\ref{prop25}. It remains to prove claim~\eqref{thm32.4}.

Any element in $\mathrm{Dext}(\mathbf{C}_{\mathcal{L}_0},\mathbf{C}_{\mathcal{L}_1})$ is a subset of 
$\mathcal{J}$ by definition. By Proposition~\ref{lemma31}, 
for any $M\in\mathcal{C}$ such that $\mathrm{End}_{\underline{\mathcal{C}}}(M)\cong \Bbbk$,
we have $E_{S(M)}\in \mathrm{Dext}(\mathbf{C}_{\mathcal{L}_0},\mathbf{C}_{\mathcal{L}_1})$. Therefore,
to prove claim~\eqref{thm32.4}, it remains to show that any element in 
$\mathrm{Dext}(\mathbf{C}_{\mathcal{L}_0},\mathbf{C}_{\mathcal{L}_1})$ is a union on $\mathcal{L}$-classes.

For $s,t\in\{1,2,\dots,n\}$, denote by $\mathrm{F}_{st}$ an indecomposable $1$-morphism in $\cC_A$
corresponding to tensoring with $Ae_s\otimes e_t A$. Then, for $s,t,u,v\in\{1,2,\dots,n\}$, we have 
\begin{equation}\label{eq6}
Ae_s\otimes e_t A\otimes_A Ae_u\otimes e_v A\cong 
Ae_s\otimes e_v A^{\oplus \mathrm{dim}(e_t Ae_u)}
\end{equation} 
which implies $\mathrm{F}_{st}\circ \mathrm{F}_{uv}\cong \mathrm{F}_{sv}^{\oplus \mathrm{dim}(e_t Ae_u)}$.
Further, for $s,t\in\{1,2,\dots,n\}$, we have
\begin{equation}\label{eq7}
Ae_s\otimes e_t A\otimes_A P_t\cong P_s^{\oplus \mathrm{dim}(e_t Ae_t)},\quad\text{ that is }\quad
\mathrm{F}_{st}\, P_t\cong P_s^{\oplus \mathrm{dim}(e_t Ae_t)}.
\end{equation} 
Let 
\begin{displaymath}
0\to\mathbf{C}_{\mathcal{L}_1}\to\mathbf{M}\to\mathbf{C}_{\mathcal{L}_0}\to 0 
\end{displaymath}
be a short exact sequence of $2$-representations. 
By the argument in the penultimate paragraph of the proof of Proposition~\ref{lemma31},
we may identify $\mathbf{C}_{\mathcal{L}_1}$ with the defining $2$-representation for $\cC_A$.
Let $X$ be an indecomposable object which is nonzero in 
$\mathbf{C}_{\mathcal{L}_0}(\mathtt{i})$ and $s,t\in\{1,2,\dots,n\}$ be such that 
$\mathrm{F}_{st}\, X$ has a non-zero indecomposable direct summand in $\mathbf{C}_{\mathcal{L}_1}$, say $P_j$,
for some $j\in\{1,2,\dots,n\}$. Applying $\mathrm{F}_{ij}$, where $i\in \{1,2,\dots,n\}$, 
and using \eqref{eq7}, we get that 
$(\mathrm{F}_{ij}\circ \mathrm{F}_{st})\, X$ has a non-zero direct summand in $\mathbf{C}_{\mathcal{L}_1}$
isomorphic to $P_i$. In particular, $\mathrm{F}_{ij}\circ \mathrm{F}_{st}$ is non-zero. Therefore,
from \eqref{eq6} we obtain that $\mathrm{F}_{it}\, X$ has a non-zero direct summand in $\mathbf{C}_{\mathcal{L}_1}$
isomorphic to $P_i$. This means exactly that the whole left cell of $\mathrm{F}_{st}$ belongs to the 
discrete extension in question. This completes the proof.
\end{proof}

After Theorem~\ref{thm32}, it is natural to ask whether 
$\mathrm{Dext}(\mathbf{C}_{\mathcal{L}_0},\mathbf{C}_{\mathcal{L}_1})=\mathcal{E}_A$.

\subsection{Construction of $2$-representations with some diagrams}\label{s5.3}

In this subsection we assume that $A$ is self-injective, in particular, $\cC_A$ is weakly fiat.
The next statement shows that decorations for decorated diagrams for finitary 
$2$-representations of $\cC_A$ are always unions of left cells. 

\begin{proposition}\label{prop92}
Assume that $A$ is self-injective and let $\mathbf{M}$ be a finitary $2$-rep\-re\-sen\-ta\-ti\-on of $\cC_A$.
Then the diagram of $\mathbf{M}$ is a bipartite graph where vertices in $\Gamma_0^{(0)}$ correspond
to simple transitive subquotients equivalent to $\mathbf{C}_{\mathcal{L}_0}$ and vertices in 
$\Gamma_0^{(1)}$ correspond to simple transitive subquotients equivalent to $\mathbf{C}_{\mathcal{L}_1}$.
Moreover, in the decorated diagram of $\mathbf{M}$ each edge is decorated by a union
of left cells from $\mathcal{J}$.
\end{proposition}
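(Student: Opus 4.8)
The plan is to read off the bipartition from the classification of simple transitive $2$-representations of $\cC_A$, and then to obtain both the absence of monochromatic edges and the shape of the decorations by localising the earlier results at two-vertex subquotients. First I would recall from \cite{MM5} that, up to equivalence, the only simple transitive $2$-representations of $\cC_A$ are $\mathbf{C}_{\mathcal{L}_0}$, whose apex is $\mathcal{J}_0$, and $\mathbf{C}_{\mathcal{L}_1}$, whose apex is $\mathcal{J}$; here one uses that $\mathcal{J}$ is strongly regular, so that all cell $2$-representations $\mathbf{C}_{\mathcal{L}_i}$ with $i\geq 1$ are equivalent. The vertices of the diagram of $\mathbf{M}$ are the classes in $\mathrm{Ind}(\mathbf{M})/_{\boldsymbol{\leftrightarrow}}$, each of which names a simple transitive subquotient of $\mathbf{M}$; declaring a vertex to lie in $\Gamma_0^{(0)}$ or $\Gamma_0^{(1)}$ according as this subquotient is equivalent to $\mathbf{C}_{\mathcal{L}_0}$ or $\mathbf{C}_{\mathcal{L}_1}$ already realises the vertex set as claimed. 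What remains is that no edge joins two vertices of the same part.

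To see this, I would fix an edge, that is a covering relation $\rho\,\boldsymbol{\to}\,\tau$ of the poset $(\mathrm{Ind}(\mathbf{M})/_{\boldsymbol{\leftrightarrow}},\boldsymbol{\to})$. Let $D_2$ be the order ideal generated by $\rho$ (the set of classes reachable from $\rho$ under $\boldsymbol{\to}$) and put $D_1:=D_2\setminus\{\rho,\tau\}$; the covering property makes $D_1$ again an order ideal, so $D_2\setminus D_1=\{\rho,\tau\}$. The additive closures of the indecomposables lying in classes of $D_1$ and of $D_2$ are sub-$2$-representations $\mathbf{K}_{D_1}\subset\mathbf{K}_{D_2}$, and I would set $\mathbf{M}':=\mathbf{K}_{D_2}/\mathbf{K}_{D_1}$. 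The induced action still carries an object of $\rho$ to one with a summand in $\tau$ while keeping $\tau$ absorbing, so the diagram of $\mathbf{M}'$ is exactly the edge $\rho\,\boldsymbol{\to}\,\tau$. If the two endpoints were in the same part, then $\mathbf{M}'$ would be isotypic; since, by Example~\ref{ex5}, both two-sided cells of $\cC_A$ are good and idempotent, Corollary~\ref{thm6-1} would force the diagram of $\mathbf{M}'$ to be semi-simple, contradicting the edge. Hence every edge is bichromatic and the diagram is bipartite. (This step uses neither self-injectivity nor weak fiatness.)

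For the decorations I would bring in self-injectivity, which makes $\cC_A$ weakly fiat. To the two-vertex subquotient $\mathbf{M}'$ attach the short exact sequence $0\to\mathbf{K}\to\mathbf{M}'\to\mathbf{N}\to 0$, with $\mathbf{K}$ transitive corresponding to the bottom vertex $\tau$ and $\mathbf{N}$ to the top vertex $\rho$; by Proposition~\ref{prop23} the apex of $\mathbf{K}$ coincides with that of $\tau$. Proposition~\ref{prop25} then places every $1$-morphism of the decoration $\Theta_{\tau,\rho}$ below this apex in $\leq_J$. Were $\tau$ in $\Gamma_0^{(0)}$, its apex would be $\mathcal{J}_0$, whose only $1$-morphism is the identity; as identities never appear in a decoration, this would give $\Theta_{\tau,\rho}=\varnothing$, impossible for an edge. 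Therefore $\tau\in\Gamma_0^{(1)}$, its apex is $\mathcal{J}$, and $\Theta_{\tau,\rho}\subseteq\mathcal{J}$. Passing to the simple transitive subquotient attached to $\tau$, which is equivalent to $\mathbf{C}_{\mathcal{L}_1}$ and hence realised by the defining action on projective modules, I would then rerun the computation from the proof of Theorem~\ref{thm32}\eqref{thm32.4}: a summand $P_j$ of $\mathrm{F}_{st}X$ produces, after applying $\mathrm{F}_{ij}$ and invoking \eqref{eq7} and \eqref{eq6}, a summand $P_i$ of $\mathrm{F}_{it}X$ for every $i$, so the whole left cell of $\mathrm{F}_{st}$ sits inside $\Theta_{\tau,\rho}$. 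Thus each edge is decorated by a union of left cells from $\mathcal{J}$.

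The main obstacle I anticipate is the extraction step of the second paragraph: one must verify with care that the order ideals $D_1\subset D_2$ give honest sub-$2$-representations, that the passage to $\mathbf{M}'$ yields a finitary $2$-representation whose induced action genuinely retains the edge (rather than collapsing it for formal reasons), and that the two weak Jordan--H\"older constituents of $\mathbf{M}'$ are precisely the simple transitive representations named by $\rho$ and $\tau$. It is exactly this bookkeeping that licenses calling $\mathbf{M}'$ isotypic and applying Corollary~\ref{thm6-1}; once it is in place, the transfer of the Theorem~\ref{thm32}\eqref{thm32.4} computation to the subquotient $\tau$ is routine.
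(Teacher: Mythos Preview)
Your proposal is correct and follows essentially the paper's approach: the paper's own proof is a single sentence referring back to the arguments of Theorem~\ref{thm32}, and you have correctly fleshed those out---the reduction to a two-vertex subquotient, the isotypic/good-cell argument (Corollary~\ref{thm6-1} via Example~\ref{ex5}) for bipartiteness, Proposition~\ref{prop25} for the orientation of edges, and the left-cell computation from Theorem~\ref{thm32}\eqref{thm32.4} for the shape of the decorations. The bookkeeping you flag as the main obstacle (that $D_1\subset D_2$ are genuine order ideals because $\rho$ covers $\tau$, and that the resulting subquotient $\mathbf{M}'$ retains exactly the two simple transitive constituents and the edge) is routine and poses no real difficulty.
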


\begin{proof}
This is proved by similar arguments as the ones used in the proof of Theorem~\ref{thm32}. 
\end{proof}

In the remainder of this section we show that any bipartite graph is possible as a diagram
for some $2$-representation of $\cC_A$.

Consider a bipartite graph $\Gamma=(\Gamma_0^{(0)},\Gamma_0^{(1)},\Gamma_1)$, where
$\Gamma_0^{(0)}$ and $\Gamma_0^{(1)}$ are disjoint sets of vertices and $\Gamma_1$ is the set of
edges such that each edge connects a vertex from $\Gamma_0^{(0)}$ with a vertex from $\Gamma_0^{(1)}$.
We identify $\Gamma_1$ with the corresponding subset in $\Gamma_0^{(0)}\times \Gamma_0^{(1)}$
and also set $\Gamma_0:=\Gamma_0^{(0)}\cup \Gamma_0^{(1)}$.
Let $\eta$ be a map from $\Gamma_1$ to $\{1,2,\dots,n\}$ such that 
$\eta((v_1,w))=\eta((v_2,w))$, for all $v_1,v_2\in \Gamma_0^{(0)}$ and $w\in \Gamma_0^{(1)}$ 
such that $(v_1,w),(v_2,w)\in \Gamma_1$.

Consider now a quiver $\boldsymbol{\Gamma}$ obtained from $\Gamma$ in the following way: it has
the same vertices as $\Gamma$ and each edge $(v,w)\in \Gamma_1$ is replaced  in 
$\boldsymbol{\Gamma}$  by an arrow  $\alpha_{(v,w)}$. 
For example, if $\Gamma$ has the following form, where the top line lists vertices from $\Gamma_0^{(0)}$ and the
bottom line lists vertices from $\Gamma_0^{(1)}$,
\begin{displaymath}
\xymatrix{
v_1\ar@{-}[dr]\ar@{-}[drrr]&&v_2\ar@{-}[dr]\ar@{-}[dl]&&v_3\ar@{-}[dl]\\
&w_1&&w_2&
},
\end{displaymath}
then $\boldsymbol{\Gamma}$ is the following quiver:
\begin{displaymath}
\xymatrix{
v_1\ar[dr]\ar[drrr]&&
v_2\ar[dr]\ar[dl]&&v_3\ar[dl]\\
&w_1&&w_2&
}
\end{displaymath}

Let $\mathcal{P}$ be a small category equivalent to the category of projective $\Bbbk\boldsymbol{\Gamma}$-modules.
Consider the inflation $\mathbf{M}:=\mathbf{P}^{\boxtimes\mathcal{P}}$ of the principal $2$-representation 
$\mathbf{P}$ of $\cC_A$ by $\mathcal{P}$ and also the abelianization  $\overline{\mathbf{M}}$ of $\mathbf{M}$.
Choose representatives $L_{i,j,x}$ and $L_{0,x}$, where $i,j\in\{1,2,\dots,n\}$ and $x\in \Gamma_0$, 
in the isomorphism classes of simple objects in $\overline{\mathbf{M}}$ 
with the obvious indexing. For $x\in \Gamma_0$
and $i\in\{1,2,\dots,n\}$, denote by $\Delta(i,x)$ the unique, up to isomorphism, indecomposable 
object  such that there is a short exact sequence
\begin{equation}\label{eq11-1}
0\to L_{i,i,x}\to \Delta(i,x)\to  L_{0,x}\to 0.
\end{equation}
Existence and uniqueness of $\Delta(i,x)$ reflects the fact that the top of the indecomposable $A$-$A$--bimodule
${}_AA_A$ is isomorphic to $L_{1,1}\oplus L_{2,2}\oplus \dots\oplus L_{n,n}$, where, for 
$i,j\in\{1,2,\dots,n\}$, we denote by $L_{i,j}$ the simple top of $Ae_i\otimes e_jA$.
As $\cC_A$ is weakly fiat, we also have the dual to \eqref{eq11-1} short exact sequence
\begin{equation}\label{eq11-3}
0\to L_{0,x}\to \nabla(i,x)\to  L_{i,i,x}\to 0.
\end{equation}
We note that the set of all sequences of the form \eqref{eq11-1} is dual to the set of all sequences of
the form \eqref{eq11-3}, however, for each individual $i$, the dual sequence to the sequence 
\eqref{eq11-1} is of the form \eqref{eq11-3}, but does not necessarily correspond to the same $i$.

For $(v,w)\in \Gamma_1$, the arrow $\alpha_{(v,w)}$ naturally defines
an indecomposable object $N_{(v,w)}$ which fits into a non-split short exact sequence
\begin{displaymath}
0\to L_{0,w} \to N_{(v,w)}\to L_{0,v}\to 0.
\end{displaymath}
Consider the direct sum of these short exact sequences over all $w\in \Gamma_0^{(1)}$ for which 
$(v,w)\in \Gamma_1$. Then the diagonal copy of $L_{0,v}$ 
gives rise to the short exact sequence
\begin{equation}\label{eq11}
0\to \bigoplus_{w:(v,w)\in \Gamma_1} L_{0,w} \to N_{v}\to L_{0,v}\to 0.
\end{equation}
For each $L_{0,w}$ in \eqref{eq11}, consider the corresponding short exact sequence of the form
\eqref{eq11-3} and take the direct sum of such sequences over all $w\in \Gamma_0^{(1)}$ such that 
$(v,w)\in \Gamma_1$  to get the short exact sequence 
\begin{displaymath}
\bigoplus_{w:(v,w)\in \Gamma_1} L_{0,w}\hookrightarrow
\bigoplus_{w:(v,w)\in \Gamma_1} \nabla(\eta((v,w)),w)\tto 
\bigoplus_{w:(v,w)\in \Gamma_1} L_{\eta((v,w)),\eta((v,w)),w}.
\end{displaymath}
Write the latter as $0\to X\to Y\to Z\to 0$. Applying the functor
$\mathrm{Hom}(Z,{}_-)$ to \eqref{eq11}, we obtain
\begin{displaymath}
\mathrm{Ext}^1(Z,X)\hookrightarrow  \mathrm{Ext}^1(Z,N_v).
\end{displaymath}
Denote by  $M_v$ the image of $Y$ in $\mathrm{Ext}^1(Z,N_v)$. Then $M_v$ is indecomposable
and fits into a short exact sequence
\begin{displaymath}
0\to \bigoplus_{v:(v,w)\in \Gamma_1} \nabla(\eta((v,w)),w)\to M_{v}\to L_{0,v} \to 0. 
\end{displaymath}
It is easy to see that $M_v$ has trivial endomorphism algebra. Indeed, by construction, $M_v$
is killed by the second power of the radical. Furthermore, again by construction, all simple 
subquotients in the top of $M_v$ are non-isomorphic, moreover, they are not isomorphic to any
of the simple subquotients in the socle of $M_v$.

Denote by $\mathcal{X}$ the additive closure in $\overline{\mathbf{M}}(\mathtt{i})$ of all
$M_v$, where $v\in \Gamma_0^{(0)}$, and all objects of the form 
$\mathrm{F}_{ji}\cdot L_{i,i,w}$, where $w\in \Gamma_0^{(1)}$, $j=1,2,\dots,n$
and $i\in\eta(\Gamma_1)$.

\begin{proposition}\label{prop91}
{\hspace{2mm}} 

\begin{enumerate}[$($i$)$]
\item\label{prop91.1} The category $\mathcal{X}$ is stable under the action of $\cC_A$ and we denote by 
$\mathbf{N}$ the corresponding finitary $2$-representation of $\cC_A$.
\item\label{prop91.2} The diagram of $\mathcal{X}$ is given by $\Gamma$.
\item\label{prop91.3} The decorated diagram of $\mathcal{X}$ is obtained if one decorates each 
$(v,w)\in\Gamma_1$ by the $\mathcal{L}$-class $\mathcal{L}_{\eta((v,w))}$.
\end{enumerate}
\end{proposition}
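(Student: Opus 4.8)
The plan is to compute, exactly as in the proof of Theorem~\ref{thm32}, the action of the indecomposable $1$-morphisms of $\cC_A$ on the two families of generators of $\mathcal X$, and then to read off the $\boldsymbol\leftrightarrow$-classes, the action preorder and the decorations. Since the only indecomposable $1$-morphisms are the identity and the $\mathrm F_{st}$, everything reduces to understanding $\mathrm F_{st}\,X$ for a generator $X$, and three computational inputs suffice. First, functoriality and \eqref{eq6} give $\mathrm F_{st}(\mathrm F_{ji}\,L_{i,i,w})=(\mathrm F_{si}\,L_{i,i,w})^{\oplus\dim e_tAe_j}$, so the second family is sent into $\mathrm{add}(\mathcal X)$ and both the colour $w$ and the right-hand index $i$ are preserved. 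Second, $\mathrm F_{st}\,L_{0,x}=0$: its top lies in $L_{s,t,x}$, and by the adjunction $(\mathrm F_{st},\mathrm F_{ts})$ together with the weakly fiat duality on $\overline{\mathbf M}$ (as in the proof of Proposition~\ref{prop25}) one checks that $\mathrm{soc}(\mathrm F_{ts}L_{s,t,x})$ contains no $L_{0,x}$, forcing the top, hence the object, to vanish. Third, and crucially, the Serre quotient of $\overline{\mathbf M}$ by the subcategory generated by the $L_{0,x}$ is the category of ($\boldsymbol\Gamma$-coloured) $A$-$A$-bimodules, on which $\mathrm F_{st}$ descends (since it kills the $L_{0,x}$) to the bimodule functor $Ae_s\otimes e_tA\otimes_A-$; as the latter sends $L_i\boxtimes L_j$ to $\delta_{t,i}(Ae_s\otimes L_j)$ and $\mathrm F_{st}L_{i,j,x}$ has top inside $L_{s,j,x}$, we obtain the key vanishing $\mathrm F_{st}\,L_{i,j,x}=0$ unless $t=i$.

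For part~\eqref{prop91.1} the second family is closed by the first input. For $M_v$ I would apply the right-exact $\mathrm F_{st}$ to $0\to\bigoplus_w\nabla(\eta((v,w)),w)\to M_v\to L_{0,v}\to 0$; since $\mathrm F_{st}L_{0,v}=0$, and since applying $\mathrm F_{st}$ to \eqref{eq11-3} with $\mathrm F_{st}L_{0,w}=0$ gives $\mathrm F_{st}\nabla(\eta((v,w)),w)\cong\mathrm F_{st}L_{\eta((v,w)),\eta((v,w)),w}$, the object $\mathrm F_{st}M_v$ is built from the $\mathrm F_{st}L_{\eta((v,w)),\eta((v,w)),w}$. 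By the key vanishing these are zero unless $t=\eta((v,w))$, and otherwise equal second-family generators; a short check that no further collapsing occurs then identifies $\mathrm F_{st}M_v$ with $\bigoplus_{w:\,\eta((v,w))=t}\mathrm F_{s,t}\,L_{t,t,w}\in\mathrm{add}(\mathcal X)$. This proves stability and yields the $2$-representation $\mathbf N$.

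For parts~\eqref{prop91.2} and~\eqref{prop91.3} I would first organise $\mathrm{Ind}(\mathbf N)$ into $\boldsymbol\leftrightarrow$-classes. Since $\mathrm F_{st}$ preserves colour and right-hand index, the colour-$w$ generators form a single transitive class $[w]$ equivalent to $\mathbf C_{\mathcal L_1}$, while each $M_v$ forms its own class equivalent to $\mathbf C_{\mathcal L_0}$ (using Proposition~\ref{prop92} for the bipartite shape); this identifies the vertices with $\Gamma_0^{(0)}\sqcup\Gamma_0^{(1)}=\Gamma_0$. The computation of $\mathrm F_{st}M_v$ shows $[M_v]\boldsymbol\to[w]$ precisely when the summand $\nabla(\eta((v,w)),w)$ occurs, i.e. when $(v,w)\in\Gamma_1$, and that no relations hold among the $[M_v]$ or among the $[w]$; hence the Hasse diagram is exactly $\Gamma$, giving~\eqref{prop91.2}. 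Finally, the same computation shows that a $[w]$-summand is produced from $M_v$ exactly by those $\mathrm F_{st}$ with $t=\eta((v,w))$, which is the whole left cell $\mathcal L_{\eta((v,w))}$; this is the decoration asserted in~\eqref{prop91.3}.

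The main obstacle is the third computational input — the exact behaviour of $\mathrm F_{st}$ on the simples $L_{i,j,x}$ in $\overline{\mathbf M}$. Here the naive ``bimodule'' formula is genuinely misleading, because $\mathrm F_{st}L_{i,j,x}$ carries correction terms with $L_{0,\cdot}$-composition factors that the bimodule quotient does not see; the subtle point that must be secured is that the nonvanishing of $\mathrm F_{st}L_{i,i,w}$ (and the appearance of a $[w]$-class summand) is controlled by the clean condition $t=i$ rather than by a Nakayama-twisted one. Establishing this via the quotient-to-bimodules functor together with the top estimate, and then verifying that $\mathrm F_{st}M_v$ contains no summand outside $\mathrm{add}(\mathcal X)$ (in particular no stray $L_{0,\cdot}$-summand), is where the real work lies; once it is done, parts~\eqref{prop91.1}--\eqref{prop91.3} follow as above.
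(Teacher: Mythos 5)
Your proposal is correct and follows essentially the same route as the paper's proof: your three computational inputs assemble into exactly the isomorphism
\begin{displaymath}
\mathrm{F}_{ji}\, M_v\;\cong\!\!\bigoplus_{w:\,(v,w)\in\Gamma_1,\ \eta((v,w))=i}\!\!\mathrm{F}_{ji}\, L_{i,i,w}
\end{displaymath}
which the paper records (summed over $i$, and justified only by the words ``by our construction''), after which both you and the paper identify the colour-$w$ classes with cell $2$-representations via \cite[Section~4]{MM1} and the class of each $M_v$ with $\mathbf{C}_{\mathcal{L}_0}$ using triviality of $\mathrm{End}(M_v)$. Two local points should be fixed, neither of which affects the outcome. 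First, for $A$ self-injective but not weakly symmetric, the right adjoint of $\mathrm{F}_{st}$ is the Nakayama-twisted functor $\mathrm{F}_{\nu(t)s}$, not $\mathrm{F}_{ts}$, so your adjunction argument for $\mathrm{F}_{st}\,L_{0,x}=0$ fails as written; the vanishing itself is true, and your own Serre-quotient-plus-top-estimate argument (the one you use for the key vanishing $\mathrm{F}_{st}L_{i,j,x}=0$ when $t\neq i$) applies verbatim to $L_{0,x}$; alternatively, horizontally composing $\mathrm{id}_{\mathrm{F}_{st}}$ with the radical $2$-morphism from $\mathrm{F}_{tt}$ to the identity $1$-morphism given by multiplication produces a split epimorphism $\mathrm{F}_{st}\circ\mathrm{F}_{tt}\to\mathrm{F}_{st}$, so applying $\mathrm{F}_{st}$ to the projective presentation of $L_{0,x}$ already surjects onto $\mathrm{F}_{st}P_{0,x}$, forcing $\mathrm{F}_{st}L_{0,x}=0$. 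Your suspicion that the condition controlling nonvanishing is the untwisted $t=i$ is indeed correct: the twist enters only through adjunction, not through this cokernel computation. Second, right exactness alone yields only a surjection $\bigoplus_{w}\mathrm{F}_{st}\nabla(\eta((v,w)),w)\tto\mathrm{F}_{st}M_v$; the isomorphism you need requires exactness of the action, which is precisely the weak fiatness of $\cC_A$ coming from self-injectivity of $A$ --- this is the opening observation of the paper's proof and should be the opening observation of yours.
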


\begin{proof}
As $A$ is self-injective, $\cC_A$ is weakly fiat, in particular $\cC_A$ always acts by exact functors.
Therefore claim~\eqref{prop91.1} follows directly from the multiplication formula
\begin{displaymath}
Ae_j\otimes e_i A\otimes_A Ae_s\otimes e_t A\cong
Ae_j\otimes e_t A^{\oplus \dim(e_iAe_s)}.
\end{displaymath}
Provided claim~\eqref{prop91.2}, this also implies claim~\eqref{prop91.3}. 

To prove claim~\eqref{prop91.2}, we note that, for all $i,j\in\{1,2,\dots,n\}$ and $v\in\Gamma_0^{(0)}$, we have 
\begin{displaymath}
\mathrm{F}_{ji}\cdot M_v\cong
\bigoplus_{w:(v,w)\in\Gamma_1} \mathrm{F}_{j  \eta((v,w)) }\, L_{\eta((v,w)),\eta((v,w)),w}
\end{displaymath}
by our construction. From the original construction of cell $2$-representations in \cite[Section~4]{MM1}, 
we have that the additive closure of  
\begin{displaymath}
\{\mathrm{F}_{j\eta((v,w))}\,L_{\eta((v,w)),\eta((v,w)),w}\,:\, j\in\{1,2,\dots,n\}\} 
\end{displaymath}
is equivalent to $\mathbf{C}_{\mathcal{L}_{\eta((v,w))}}$. Furthermore, every  $M_v$ corresponds to a 
simple transitive subquotient  equivalent to $\mathbf{C}_{\mathcal{L}_0}$ (since the endomorphism
algebra of this $M_v$ is trivial). This completes the proof.
\end{proof}

\section{Further results and examples}\label{s6}

\subsection{Discrete extensions and the two-sided order}\label{s6.1}

The following generalizes, in some sense, the assertions of Theorem~\ref{thm32}\eqref{thm32.1}-\eqref{thm32.3}.

\begin{theorem}\label{thm41}
Let $\cC$ be weakly fiat. Consider two transitive $2$-representations $\mathbf{K}$ and $\mathbf{N}$ of $\cC$ 
and denote by  $\mathcal{J}_{\mathbf{N}}$ the apex of $\mathbf{N}$ and by 
$\mathcal{J}_{\mathbf{K}}$ the apex of $\mathbf{K}$. Assume that:
\begin{enumerate}[$($a$)$]
\item\label{thm41.1} $\mathcal{J}_{\mathbf{K}}$ is good,
\item\label{thm41.2} $\mathcal{J}_{\mathbf{K}}\leq_J \mathcal{J}_{\mathbf{N}}$,
\item\label{thm41.3} for any left cell $\mathcal{L}\in \mathcal{J}_{\mathbf{N}}$, 
there is a left cell $\mathcal{L}'\in \mathcal{J}_{\mathbf{K}}$
such that $\mathcal{L}'\leq_L\mathcal{L}$.
\end{enumerate}
Then we have $\mathrm{Dext}(\mathbf{N},\mathbf{K})=\varnothing$.
\end{theorem}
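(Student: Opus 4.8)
The plan is to mimic the proof of Theorem~\ref{thm6} as closely as possible. First I would reduce, exactly as there, to the case where $\mathbf{M}$ has precisely two simple transitive subquotients, sitting in a short exact sequence $0\to\mathbf{K}\to\mathbf{M}\to\mathbf{N}\to 0$ with $\mathbf{K}$ the transitive subrepresentation and $\mathbf{N}$ the quotient. Ordering the indecomposables of $\mathbf{M}$ so that those of $\mathbf{K}$ come first, every $[\mathrm{F}]_{\oplus}^{\mathbf{M}}$ is block upper-triangular,
\[
[\mathrm{F}]_{\oplus}^{\mathbf{M}}=\left(\begin{array}{c|c}[\mathrm{F}]_{\oplus}^{\mathbf{K}}&X_{\mathrm{F}}\\\hline 0&[\mathrm{F}]_{\oplus}^{\mathbf{N}}\end{array}\right),
\]
where the non-negative block $X_{\mathrm{F}}$ records how $\mathrm{F}$ carries objects of $\mathbf{N}$ into $\mathbf{K}$; by definition $\mathrm{F}\in\Theta$ if and only if $X_{\mathrm{F}}\neq 0$ (for $\mathrm{F}$ indecomposable). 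Since $\mathcal{J}_{\mathbf{K}}$ is good (hypothesis (a)), I would choose positive reals $c_i$ so that $x=\sum_i c_i[\mathrm{F}_i]$, the sum running over the indecomposables $\mathrm{F}_i$ of $\mathcal{J}_{\mathbf{K}}$, satisfies \eqref{eqgood}, and put $x_{\mathbf{K}}=\sum_i c_i[\mathrm{F}_i]_{\oplus}^{\mathbf{K}}$, $x_{\mathbf{N}}=\sum_i c_i[\mathrm{F}_i]_{\oplus}^{\mathbf{N}}$ and $X=\sum_i c_iX_{\mathrm{F}_i}\geq 0$. As $\mathcal{J}_{\mathbf{K}}$ is the apex of $\mathbf{K}$, Lemma~\ref{lem801} shows that $x_{\mathbf{K}}$ has strictly positive entries and of course $x_{\mathbf{K}}$ satisfies \eqref{eqgood}. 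By Corollary~\ref{cor26} it now suffices to prove $X=0$: were $X=0$, no morphism of $\mathcal{J}_{\mathbf{K}}$ could lie in $\Theta$, so $\Theta\cap\mathcal{J}_{\mathbf{K}}=\varnothing$ and hence, by Corollary~\ref{cor26}, $\mathrm{Dext}(\mathbf{N},\mathbf{K})=\varnothing$.

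The second step transfers the good relation to the off-diagonal block. Proposition~\ref{prop25} gives $X_{\mathrm{G}}=0$ for every indecomposable $\mathrm{G}$ with $\mathrm{G}\not\leq_J\mathcal{J}_{\mathbf{K}}$. Reading \eqref{eqgood} as an identity in $[\cC]_{\oplus}$, the difference of its two sides is $\mathbf{M}$ applied to a non-negative combination of $1$-morphisms lying in cells strictly above $\mathcal{J}_{\mathbf{K}}$, and all such morphisms have vanishing off-diagonal block. Comparing the off-diagonal blocks of \eqref{eqgood} evaluated through $\mathbf{M}$ therefore yields the exact analogue of \eqref{eq4}, in which the off-diagonal block of $x_{\mathbf{M}}^{j}$ equals $\sum_{p+q=j-1}x_{\mathbf{K}}^{p}\,X\,x_{\mathbf{N}}^{q}$. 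This is the sandwiched equation of the proof of Theorem~\ref{thm6}, with the crucial difference that the two diagonal matrices are now distinct: $x_{\mathbf{K}}$ (which satisfies \eqref{eqgood} and is strictly positive) on the left, and $x_{\mathbf{N}}$ on the right.

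Next I would rerun the rearrangement of Theorem~\ref{thm6}, but using $x_{\mathbf{K}}$'s relation \emph{only on the left-hand factors} $x_{\mathbf{K}}^{p}$ --- this is the only relation available, since $\mathbf{N}$ does not annihilate the cells lying strictly between $\mathcal{J}_{\mathbf{K}}$ and $\mathcal{J}_{\mathbf{N}}$, so $x_{\mathbf{N}}$ need not satisfy \eqref{eqgood}. Multiplying by a suitable power of $x_{\mathbf{K}}$ on the left and substituting \eqref{eqgood} repeatedly eliminates the right-hand side and leaves an identity of the shape ``a non-negative integral combination of matrices $x_{\mathbf{K}}^{s}X x_{\mathbf{N}}^{t}$ equals zero'', in which (as $n>k$) at least one term with $s\geq 1$ survives with positive coefficient. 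Since all these matrices are non-negative, each vanishes; in particular $x_{\mathbf{K}}^{s}X x_{\mathbf{N}}^{t}=0$ for some $s\geq 1$, and strict positivity of $x_{\mathbf{K}}^{s}$ forces $X x_{\mathbf{N}}^{t}=0$. To conclude $X=0$ it is enough that $x_{\mathbf{N}}$ has no zero row, i.e. that every indecomposable object of $\mathbf{N}$ occurs in $\mathbf{N}(\mathrm{F}')\,W$ for some $\mathrm{F}'\in\mathcal{J}_{\mathbf{K}}$ and some $W$ (then $x_{\mathbf{N}}^{t}$ also has no zero row, and $X x_{\mathbf{N}}^{t}=0$ with $X\geq 0$ gives $X=0$). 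This is exactly where (b), (c) and weak fiatness are used: by Lemma~\ref{lem801} applied to $\mathbf{N}$, every object $Z$ of $\mathbf{N}$ is a summand of $\mathbf{N}(\mathrm{G}_0)W$ for some $\mathrm{G}_0$ in a left cell $\mathcal{L}\subseteq\mathcal{J}_{\mathbf{N}}$; since Lemma~\ref{lem25-1} gives $\mathcal{J}_{\mathbf{K}}^{*}=\mathcal{J}_{\mathbf{K}}$ and $\mathcal{J}_{\mathbf{N}}^{*}=\mathcal{J}_{\mathbf{N}}$, applying $*$ converts (c) into its right-handed form (for every right cell $\mathcal{R}\subseteq\mathcal{J}_{\mathbf{N}}$ there is a right cell $\mathcal{R}'\subseteq\mathcal{J}_{\mathbf{K}}$ with $\mathcal{R}'\leq_R\mathcal{R}$). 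Taking $\mathcal{R}$ to be the right cell of $\mathrm{G}_0$ produces $\mathrm{F}'\in\mathcal{R}'\subseteq\mathcal{J}_{\mathbf{K}}$ with $\mathrm{G}_0$ a summand of $\mathrm{F}'\circ\mathrm{H}$, whence $Z$ is a summand of $\mathbf{N}(\mathrm{F}')\bigl(\mathbf{N}(\mathrm{H})W\bigr)$, as needed; here (b) ensures $\mathbf{N}(\mathrm{F}')\neq 0$.

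I expect the main obstacle to be precisely the asymmetry flagged in the second and third paragraphs: unlike in Theorem~\ref{thm6}, the right-hand diagonal matrix $x_{\mathbf{N}}$ satisfies no polynomial relation and need not be an entrywise-positive matrix, so the symmetric cancellation of Theorem~\ref{thm6} is unavailable. The resolution is to perform the reduction one-sidedly (using only the left relation and strict positivity of $x_{\mathbf{K}}$) and to replace the ``positivity of $x_{\mathbf{N}}$'' step by the weaker but sufficient ``no zero row'' property, which is supplied by hypothesis (c) after it is dualised through the weak involution to a right-cell domination statement. Hypotheses (b) and (c) are what make this last step work, while (a) supplies the relation that drives the whole argument; I would expect the delicate point in writing up to be the bookkeeping of which term of the rearranged equation survives with $s\geq 1$, exactly as in the corresponding passage of the proof of Theorem~\ref{thm6}.
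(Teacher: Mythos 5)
Your proposal is correct and is essentially the paper's own proof: the same block upper-triangular decategorified matrices, the same one-sided use of the good relation for $\mathcal{J}_{\mathbf{K}}$ via the cancellation scheme of Theorem~\ref{thm6} to obtain $x_{\mathbf{K}}^{s}Xx_{\mathbf{N}}^{t}=0$, positivity of $x_{\mathbf{K}}$ from Lemma~\ref{lem801}, the reduction to $X=0$ via Corollary~\ref{cor26}, and the dualization of condition~\eqref{thm41.3} through $*$ and Lemma~\ref{lem25-1} combined with Lemma~\ref{lem801} applied to $\mathbf{N}$ to exclude zero rows of $x_{\mathbf{N}}$. The only deviations are cosmetic: you establish the no-zero-row property directly rather than by contradiction with the positivity of the matrix of $\mathbf{N}(\mathrm{F}(\mathcal{J}_{\mathbf{N}}))$, and you rightly observe that the paper's additional zero-column argument is not actually needed.
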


\begin{proof}
Consider a short exact sequence of the form \eqref{eq201}. For any $1$-morphism $\mathrm{F}$ in $\cC$,
we have
\begin{displaymath}
[\mathrm{F}]_{\oplus}^{\mathbf{M}}=
\left(\begin{array}{cc}[\mathrm{F}]_{\oplus}^{\mathbf{K}}&Y_{\mathrm{F}}\\
0&[\mathrm{F}]_{\oplus}^{\mathbf{N}}\end{array}\right), 
\end{displaymath}
where all involved matrices have non-negative integer entries.
If $\mathrm{F}$ is an indecomposable $1$-morphism such that $\mathrm{F}\not\leq_J \mathcal{J}_{\mathbf{K}}$,
then condition~\eqref{thm41.2} implies $[\mathrm{F}]_{\oplus}^{\mathbf{K}}=0$. Moreover, in this case we also have 
$Y_{\mathrm{F}}=0$ by arguments similar to those in the proof of Proposition~\ref{prop25}.
If $\mathrm{F}=\mathrm{F}(\mathcal{J}_{\mathbf{K}})$, then $[\mathrm{F}]_{\oplus}^{\mathbf{K}}$
has positive entries.

By condition~\eqref{thm41.1}, $\mathcal{J}_{\mathbf{K}}$ is good. Let $x\in B_+^{(\mathcal{J}_{\mathbf{K}})}$ 
be the corresponding element, 
\begin{displaymath}
x=\sum_{\mathrm{G}\in \mathcal{J}_{\mathbf{K}}}c_{\mathrm{G}}[\mathrm{G}],
\end{displaymath}
where all $c_{\mathrm{G}}>0$. For any $2$-representation $\mathbf{L}$ of $\cC$, we set 
\begin{displaymath}
x_{\mathbf{L}}:=\sum_{\mathrm{G}\in \mathcal{J}_{\mathbf{K}}}c_{\mathrm{G}}[\mathrm{G}]_{\oplus}^{\mathbf{L}}. 
\end{displaymath}
In particular, we have 
\begin{displaymath}
x_{\mathbf{M}}=
\left(\begin{array}{cc}x_{\mathbf{K}}&Y_{x}\\
0&x_{\mathbf{N}}\end{array}\right), 
\end{displaymath}
for some matrix $Y_x$ with non-negative real coefficients.

Let $f(t)$ and $g(t)$ denote the polynomials in the left hand side and the 
right hand side of Equation~\eqref{eqgood}, respectively. In other words, Equation~\eqref{eqgood}
reads $f(x)=g(x)$. Then, from the previous two paragraphs, we have the equality
\begin{displaymath}
\left(\begin{array}{cc}f(x_{\mathbf{K}})&Y\\
0&f(x_{\mathbf{N}})+Q\end{array}\right)=
\left(\begin{array}{cc}g(x_{\mathbf{K}})&Y'\\
0&g(x_{\mathbf{N}})+Q'\end{array}\right), 
\end{displaymath}
where $Q$ and $Q'$ record contributions of $1$-morphisms $\mathrm{H}$ such that 
$\mathrm{H}\not\leq_J \mathcal{J}_{\mathbf{K}}$ and
\begin{gather*}
Y=\sum_{i=0}^{n-1}x_{\mathbf{K}}^{n-1-i}Y_x x_{\mathbf{N}}^i+
a_{n-1}\sum_{i=0}^{n-2}x_{\mathbf{K}}^{n-2-i}Y_x x_{\mathbf{N}}^i+\dots+
a_{k+1}\sum_{i=0}^{k}x_{\mathbf{K}}^{k-i}Y_x x_{\mathbf{N}}^i
\\ 
Y'=a_k\sum_{i=0}^{k-1}x_{\mathbf{K}}^{k-1-i}Y_x x_{\mathbf{N}}^i+
a_{k-1}\sum_{i=0}^{k-2}x_{\mathbf{K}}^{k-2-i}Y_x x_{\mathbf{N}}^i+\dots+
a_{l}\sum_{i=0}^{l-1}x_{\mathbf{K}}^{l-1-i}Y_x x_{\mathbf{N}}^i. 
\end{gather*}
Similarly to the proof of Theorem~\ref{thm6}, the equality $Y=Y'$ implies
$x_{\mathbf{K}}^iY_x x_{\mathbf{N}}^j=0$, for some $i,j>0$. As the matrix 
$x_{\mathbf{K}}$ has positive entries, we get $Y_x x_{\mathbf{N}}^j=0$, for some $j>0$.

By Corollary~\ref{cor26}, to prove the  assertion of the theorem, we need to show that $Y_x=0$. 
For this it would be sufficient to show that each row of $x_{\mathbf{N}}^j$ has a  non-zero entry. 
For the latter, it is enough to show that each row and each column of $x_{\mathbf{N}}$ has
a non-zero entry.

Note that, due to existence of $*$ and Lemma~\ref{lem25-1}, condition~\eqref{thm41.3} 
is equivalent to the condition that,  for any right cell $\mathcal{R}\in \mathcal{J}_{\mathbf{N}}$, 
there is a right cell $\mathcal{R}'\in \mathcal{J}_{\mathbf{K}}$ such that $\mathcal{R}'\leq_R\mathcal{R}$.

Let us first assume that $x_{\mathbf{N}}$ has a zero row. Then the matrix of 
$\mathbf{N}(\mathrm{F}(\mathcal{J}_{\mathbf{K}}))$ has a zero row.
Consider $\mathrm{F}(\mathcal{J}_{\mathbf{N}})$.
The previous paragraph means that $\mathrm{F}(\mathcal{J}_{\mathbf{N}})$ is isomorphic
to a direct summand of $\mathrm{F}(\mathcal{J}_{\mathbf{K}})\circ {\mathrm{G}}$ for some $1$-morphism
${\mathrm{G}}$. Therefore, each entry in the matrix of 
$\mathbf{N}(\mathrm{F}(\mathcal{J}_{\mathbf{N}}))$ is, on the one hand,
non-negative, but, on the other hand, does not exceed the corresponding entry in the matrix of 
$\mathbf{N}(\mathrm{F}(\mathcal{J}_{\mathbf{K}})\circ {\mathrm{G}})$. 
As the matrix of $\mathbf{N}(\mathrm{F}(\mathcal{J}_{\mathbf{K}}))$ has a zero row, 
the matrix of $\mathbf{N}(\mathrm{F}(\mathcal{J}_{\mathbf{K}})\circ {\mathrm{G}})$
has a zero row. Consequently, the matrix of 
$\mathbf{N}(\mathrm{F}(\mathcal{J}_{\mathbf{N}}))$ has a zero row. This contradicts
our assumption that $\mathbf{N}$ is a transitive $2$-representation with apex $\mathcal{J}_{\mathbf{N}}$.

Let us now assume that $x_{\mathbf{N}}$ has a zero column. Then the matrix of 
$\mathbf{N}(\mathrm{F}(\mathcal{J}_{\mathbf{K}}))$ has a zero column.
Consider $\mathrm{F}(\mathcal{J}_{\mathbf{N}})$.
Condition~\eqref{thm41.3} implies that $\mathrm{F}(\mathcal{J}_{\mathbf{N}})$ is isomorphic
to a direct summand of $ \mathrm{G}\circ \mathrm{F}(\mathcal{J}_{\mathbf{K}})$ for some $1$-morphism
${\mathrm{G}}$. Therefore, similarly to the arguments of the previous paragraph, we obtain that
the matrix
$\mathbf{N}(\mathrm{F}(\mathcal{J}_{\mathbf{N}}))$ has a zero column. This contradicts
our assumption that $\mathbf{N}$ is a transitive $2$-rep\-re\-sen\-ta\-ti\-on with apex $\mathcal{J}_{\mathbf{N}}$.
The proof is now complete.
\end{proof}

\subsection{Hidden discrete extensions for Soergel bimodules of type $A_2$}\label{s6.2}

Consider the Weyl group 
\begin{displaymath}
W=\{e,s,t,st,ts,w_0:=sts=tst\} 
\end{displaymath}
of type $A_2$ (it is isomorphic to the symmetric group $S_3$) and denote by $\mathtt{C}$
the corresponding coinvariant algebra. Let $\mathcal{C}$ be a small category equivalent 
to $\mathtt{C}\text{-}\mathrm{mod}$. Let $\cS$ be the finitary $2$-category of Soergel 
bimodules associated to $\mathcal{C}$. This $2$-ca\-te\-go\-ry has:
\begin{itemize}
\item one object $\mathtt{i}$ which can be identified with $\mathcal{C}$;
\item as $1$-morphisms, all endofunctors of $\mathcal{C}$ isomorphic to direct sums of endofunctors
given by tensoring with Soergel $\mathtt{C}\text{-}\mathtt{C}$--bimodules;
\item as $2$-morphisms, all natural transformations of functors.
\end{itemize}
Representatives of isomorphism classes of indecomposable 
$1$-morphisms for $\cS$ are denoted by $\theta_w$, where $w\in W$. These indecomposable 
$1$-morphisms correspond to elements in the Kazhdan-Lusztig basis of $\mathbb{Z}W$. 
Multiplication of elements is given by the following table, where, for simplicity, we
abbreviate $\theta_w$ by $w$:
\begin{displaymath}
\begin{array}{c||c|c|c|c|c|c}
&e&s&t&st&ts&sts\\
\hline\hline
e&e&s&t&st&ts&sts\\
\hline
s&s&s^{\oplus 2}&st&st^{\oplus 2}&sts\oplus s&sts^{\oplus 2}\\
\hline
t&t&ts&t^{\oplus 2}&tst\oplus t&ts^{\oplus 2}&sts^{\oplus 2}\\
\hline
st&st&sts\oplus s&st^{\oplus 2}&sts^{\oplus 2}\oplus st&sts^{\oplus 2}\oplus s^{\oplus 2}&sts^{\oplus 4}\\
\hline
ts&ts&ts^{\oplus 2}&tst\oplus t&sts^{\oplus 2}\oplus t^{\oplus 2}&sts^{\oplus 2}\oplus ts&sts^{\oplus 4}\\
\hline
sts&sts&sts^{\oplus 2}&sts^{\oplus 2}&sts^{\oplus 4}&sts^{\oplus 4}&sts^{\oplus 6}\\
\hline
\end{array} 
\end{displaymath}
We thus have four left cells, namely, $\mathcal{L}_{e}:=\{\theta_e\}$, 
$\mathcal{L}_{s}:=\{\theta_s,\theta_{ts}\}$, $\mathcal{L}_{t}:=\{\theta_t,\theta_{st}\}$
and $\mathcal{L}_{sts}:=\{\theta_{sts}\}$.

The algebra $\mathtt{C}$ is the quotient of the polynomial algebra $\mathbb{C}[x,y,z]$ 
modulo the ideal $I$ generated by the symmetric polynomials $x+y+z$, $xy+xz+yz$ and $xyz$. Here we assume that 
$s$ acts on $\mathbb{C}[x,y,z]$ by swapping $x$ and $y$, and $t$ acts by swapping $y$ and $z$.
We fix the basis in $\mathtt{C}$ consisting of the images, in the quotient $\mathbb{C}[x,y,z]/I$, of the monomials
$1$, $x$, $y$, $x^2$, $xy$ and $x^2y$. In this basis the multiplication in $\mathtt{C}$ is given by the 
following table:
\begin{displaymath}
\begin{array}{c||c|c|c|c|c|c}
&1&x&y&x^2&xy&x^2y\\
\hline\hline
1&1&x&y&x^2&xy&x^2y\\
\hline
x&x&x^2&xy&0&x^2y&0\\
\hline
y&y&xy&-xy-x^2&x^2y&-x^2y&0\\
\hline
x^2&x^2&0&x^2y&0&0&0\\
\hline
xy&xy&x^2y&-x^2y&0&0&0\\
\hline
x^2y&x^2y&0&0&0&0&0\\
\hline
\end{array} 
\end{displaymath}
The subalgebra $\mathtt{C}^s$ of $\mathtt{C}$, consisting of all $s$-invariant elements, has the basis
$1$, $x+y$ and $xy$ and is isomorphic to $\mathbb{C}[q]/(q^3)$. The subalgebra $\mathtt{C}^t$ 
of $\mathtt{C}$, consisting of all $t$-invariant elements, has the basis $1$, $x$ and $x^2$ and 
is also isomorphic to $\mathbb{C}[q]/(q^3)$. For computations, the following table of images, 
in the quotient $\mathbb{C}[x,y,z]/I$, of all other monomials of degree at most $3$ in $\mathbb{C}[x,y,z]$, 
written with respect to the above basis in  $\mathtt{C}$, is very useful:
\begin{gather*}
\begin{array}{l||c|c|c|c|c|c|c|c|c|c|c|c|c|c}
\text{in }\mathbb{C}[x,y,z]&z&y^2&z^2&xz&yz\\
\hline
\text{in }\mathtt{C}&-x-y&-x^2-xy&xy&-x^2-xy&x^2\\
\end{array} \\
\begin{array}{l||c|c|c|c|c|c|c|c|c|c|c|c|c|c}
\text{in }\mathbb{C}[x,y,z]&x^3&y^3&z^3&x^2z&xy^2&xz^2&y^2z&yz^2\\
\hline
\text{in }\mathtt{C}&0&0&0&-x^2y&-x^2y&x^2y&x^2y&-x^2y\\
\end{array} 
\end{gather*}

For $a,b\in\mathbb{C}$, denote by $M_{a,b}$ the $\mathtt{C}$-module given as the quotient of 
the indecomposable projective module ${}_\mathtt{C}\mathtt{C}$ by the submodule $K_{a,b}$
generated by $ax+by$. 

\begin{lemma}\label{lem61}
Under the assumptions 
\begin{equation}\label{eq62}
a^2-ab+b^2=0,\qquad a\neq 0,\qquad b\neq 0, 
\end{equation}
the $\mathtt{C}$-module $M_{a,b}$ has the following properties:
\begin{enumerate}[$($i$)$]
\item\label{lem61.1} $M_{a,b}$ is a uniserial module of dimension $3$.
\item\label{lem61.2} $M_{a,b}$ is projective as a $\mathtt{C}^s$-module.
\item\label{lem61.3} $M_{a,b}$ is projective as a $\mathtt{C}^t$-module.
\item\label{lem61.4} The stable endomorphism algebra of $M_{a,b}$ (that is the quotient of 
$\mathrm{End}_{\mathtt{C}}(M_{a,b})$ modulo the ideal generated by all endomorphisms which factor
through a projective $\mathtt{C}$-module) is isomorphic to $\mathbb{C}$.
\end{enumerate}
\end{lemma}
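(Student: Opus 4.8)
The plan is to treat $M_{a,b}$ as a $3$-dimensional graded cyclic $\mathtt{C}$-module and reduce everything to explicit linear algebra on it, with condition \eqref{eq62} controlling both the dimension and the action of the relevant nilpotents. First I would compute the submodule $K_{a,b}=\mathtt{C}\cdot(ax+by)$ directly from the given multiplication table: multiplying $ax+by$ by $1$, by $\mathtt{C}_1=\langle x,y\rangle$ and by $\mathtt{C}_2=\langle x^2,xy\rangle$ produces homogeneous pieces in degrees $1,2,3$, and the degree-$2$ piece is spanned by $ax^2+bxy$ and $-bx^2+(a-b)xy$, whose $2\times 2$ coefficient determinant is exactly $a^2-ab+b^2$. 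Under \eqref{eq62} this vanishes, so $K_{a,b}$ is one-dimensional in each of degrees $1,2,3$, giving $\dim K_{a,b}=3$ and $\dim M_{a,b}=3$. Since $K_{a,b}$ is generated by the homogeneous element $ax+by$, the module $M_{a,b}$ is graded with one-dimensional pieces in degrees $0,1,2$; as $\mathtt{C}$ is local with $\mathtt{C}_0=\Bbbk$, its radical filtration coincides with the grading, which yields uniseriality and claim~\eqref{lem61.1}. Throughout I would work in the basis $\{\bar 1,\bar x,\overline{x^2}\}$, where the relations $\bar y=-\tfrac ab\bar x$, $\overline{xy}=-\tfrac ab\overline{x^2}$ and $\overline{x^2y}=0$ are immediate.

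For claims~\eqref{lem61.2} and~\eqref{lem61.3} I would use that $\mathtt{C}^s$ and $\mathtt{C}^t$ are each isomorphic to $\Bbbk[q]/(q^3)$, a local self-injective algebra whose only indecomposable projective is the rank-one free module; hence a $3$-dimensional module over it is projective precisely when the generator of the radical acts as a single regular nilpotent Jordan block. For $\mathtt{C}^t$ this generator is $x$, and $x$ sends $\bar 1\mapsto\bar x\mapsto\overline{x^2}\mapsto 0$ along the basis, so $M_{a,b}$ is free of rank one over $\mathtt{C}^t$. For $\mathtt{C}^s$ I would take $q=x+y$ (using $(x+y)^2=xy$ and $(x+y)^3=0$) and compute $(x+y)\bar 1=\tfrac{b-a}{b}\bar x$ and $(x+y)\bar x=\tfrac{b-a}{b}\overline{x^2}$; here \eqref{eq62} forces $a\neq b$ (otherwise $a=0$), so $\tfrac{b-a}{b}\neq 0$ and $q$ is again regular nilpotent, giving projectivity over $\mathtt{C}^s$.

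The only genuinely non-routine part is claim~\eqref{lem61.4}. Since $M_{a,b}$ is uniserial with all composition factors equal to the unique simple, I would first identify $\mathrm{End}_{\mathtt{C}}(M_{a,b})\cong\Bbbk[f]/(f^3)$, where $f$ is the degree-raising shift $\bar 1\mapsto\bar x\mapsto\overline{x^2}\mapsto 0$, and observe that $\mathrm{id}$ is not stably trivial because $M_{a,b}$ is indecomposable of dimension $3$ while the unique indecomposable projective $\mathtt{C}$ has dimension $6$. It therefore suffices to show that $f$ factors through a projective, for then $f$ and $f^2$ vanish stably and $\mathrm{End}_{\underline{\mathcal{C}}}(M_{a,b})\cong\Bbbk$. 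The key point is that $\mathtt{C}$ is self-injective (it is a coinvariant algebra), so I would compute $\mathrm{ann}_{\mathtt{C}}(ax+by)=\mathtt{C}\cdot(bx+ay)$, using the identity $(ax+by)(bx+ay)=(a^2-ab+b^2)\,xy=0$ from \eqref{eq62}; this annihilator is $3$-dimensional, so $\bar 1\mapsto bx+ay$ defines an embedding $\iota\colon M_{a,b}\hookrightarrow\mathtt{C}$. Composing with a map $\rho\colon\mathtt{C}\to M_{a,b}$, $1\mapsto m$, gives $\rho\iota(\bar 1)=(bx+ay)\,m$, and since $(bx+ay)\bar 1=\tfrac{b^2-a^2}{b}\bar x$ with $b^2-a^2\neq 0$ (condition \eqref{eq62} also excludes $a=-b$), a suitable scalar choice of $m$ yields $\rho\iota=f$. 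As $\mathtt{C}$ is projective, this exhibits $f$ as a map factoring through a projective, completing claim~\eqref{lem61.4}.

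The hard part will be claim~\eqref{lem61.4}: the direct computation only gives $\dim\mathrm{End}_{\mathtt{C}}(M_{a,b})=3$, and the real content is to show that the two-dimensional radical of this endomorphism ring consists \emph{entirely} of maps factoring through projectives. The leverage comes from self-injectivity of $\mathtt{C}$ together with the explicit identity $\mathrm{ann}_{\mathtt{C}}(ax+by)=\mathtt{C}(bx+ay)$, which is precisely where condition \eqref{eq62} is used decisively, both through the vanishing $a^2-ab+b^2=0$ and through its consequences $a\neq b$ and $a\neq -b$.
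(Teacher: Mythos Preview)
Your proof is correct and follows essentially the same route as the paper's. The key ingredients---the $2\times 2$ matrix with determinant $a^2-ab+b^2$ controlling $\dim K_{a,b}$, the annihilator computation $(ax+by)(bx+ay)=(a^2-ab+b^2)\,xy=0$, and the factorisation of the degree-raising endomorphism through ${}_{\mathtt{C}}\mathtt{C}$ via $\bar 1\mapsto bx+ay$---are exactly those used in the paper. The only cosmetic difference is in parts~\eqref{lem61.2}--\eqref{lem61.3}: the paper argues that the socle generator of $\mathtt{C}^s$ (respectively $\mathtt{C}^t$) does not lie in $K_{a,b}$, while you check directly that $x+y$ (respectively $x$) acts as a regular nilpotent; these are two phrasings of the same verification that the cyclic $\mathtt{C}^s$- (respectively $\mathtt{C}^t$-) submodule generated by $\bar 1$ has full dimension $3$.
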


\begin{proof}
The algebra $\mathtt{C}$ is self-injective and naturally graded (with both $x$ and $y$ having degree two).
As  $K_{a,b}$ is generated by a homogeneous element, both $K_{a,b}$ and $M_{a,b}$ are graded. 
The Poincar{\'e} polynomial 
of $\mathtt{C}$ is $1+2q^2+2q^4+q^6$. As $K_{a,b}$ is generated in degree two, its Poincar{\'e} polynomial
is either $q^2+2q^4+q^6$ or $q^2+q^4+q^6$. To determine which of the latter polynomials is the correct one,
we use the above multiplication table to 
write the result of the action of both $x$ and $y$ on the generator $ax+by$ of $K_{a,b}$ with respect to 
the basis $x^2$, $xy$ of the degree four part in $\mathtt{C}$. This gives the matrix
\begin{equation}\label{eq63}
\left(\begin{array}{cc}a&-b\\b&a-b\end{array}\right).  
\end{equation}
Under the assumption $a^2-ab+b^2=0$, the determinant of this matrix is zero. Therefore 
the Poincar{\'e} polynomial of $K_{a,b}$ is $q^2+q^4+q^6$. This means that the 
Poincar{\'e} polynomial of $M_{a,b}$ is $1+q^2+q^4$. Therefore $M_{a,b}$ has dimension $3$.
As $M_{a,b}$ also has simple top (since ${}_\mathtt{C}\mathtt{C}$ has simple top) 
and $\mathtt{C}$ is generated by elements of degree two, it follows that 
$M_{a,b}$ is uniserial, proving claim~\eqref{lem61.1}.
 
Claim~\eqref{lem61.2} follows from claim~\eqref{lem61.1} and observation that, in the case $a\neq 0$,
the element $xy$ which generates the socle of $\mathtt{C}^s$ does not belong to $K_{a,b}$ as the
degree four space in the latter is generated by $ax^2+bxy$. Similarly, claim~\eqref{lem61.3} 
follows from claim~\eqref{lem61.1} and observation that, in the case $b\neq 0$,
the element $x^2$ which generates the socle of $\mathtt{C}^t$ does not belong to $K_{a,b}$ as the
degree four space in the latter is generated by $ax^2+bxy$.
 
Consider the basis of $M_{a,b}$ given by the images of $1$, $x$ and $x^2$. Since $\mathtt{C}$ is commutative,
mapping $1$ to $x$ extends uniquely to an endomorphism $\varphi$ of $M_{a,b}$ and it follows that
$\mathrm{End}_{\mathtt{C}}(M_{a,b})\cong\mathbb{C}[\varphi]/(\varphi^3)$. To prove claim~\eqref{lem61.4},
we need to check that $\varphi$ factors through ${}_\mathtt{C}\mathtt{C}$. We claim that there is a
non-zero element $v$ of degree two in $\mathtt{C}$ such that $(ax+by)v=0$. Indeed, writing $v=\alpha x+\beta y$
and using our multiplication table, we get a system of linear equations for $\alpha$ and $\beta$ whose
matrix is exactly \eqref{eq63}. Under the assumptions of \eqref{eq62}, we have a non-zero solution to this system
of linear equations, for example, we can take $v=bx+ay$. 

For any $v$ as above, mapping $1$ to $v$ extends to a homomorphism 
$\psi:M_{a,b}\to {}_\mathtt{C}\mathtt{C}$.
Note that $a=\pm b$ together with $a^2-ab+b^2=0$ implies $a=b=0$. Therefore, under the 
assumptions of \eqref{eq62}, the elements $v$ and $ax+by$ are linearly independent in $\mathtt{C}$.
Hence, the composition of $\psi$ followed by the canonical projection of ${}_\mathtt{C}\mathtt{C}$
onto $M_{a,b}$ is non-zero and thus a non-zero scalar multiple of $\varphi$. This means that
$\varphi$ factors through ${}_\mathtt{C}\mathtt{C}$ and implies claim~\eqref{lem61.4}.
\end{proof}

For $a,b\in\mathbb{C}$ satisfying \eqref{eq62}, consider the subcategory $\mathcal{C}_{a,b}$ of 
$\mathcal{C}$ defined as the additive closure of the category $\mathcal{C}_{\mathrm{proj}}$ of
all projective objects in $\mathcal{C}$ together with $M_{a,b}$. Note that the 
action of $\cS$ restricts to $\mathcal{C}_{\mathrm{proj}}$ and the corresponding $2$-representation
$\mathbf{K}$ is equivalent to $\mathbf{C}_{\mathcal{L}_{sts}}$.

\begin{theorem}\label{thm64}
{\hspace{2mm}}

\begin{enumerate}[$($i$)$]
\item\label{thm64.1} The action of $\cS$ restricts to $\mathcal{C}_{a,b}$ and we denote by 
$\mathbf{M}_{a,b}$ the corresponding $2$-representation of $\cS$.
\item\label{thm64.2} We have a short exact sequence of $2$-representations as follows:
\begin{displaymath}
0\to  \mathbf{K}\to \mathbf{M}_{a,b}\to \mathbf{N}_{a,b}\to 0,
\end{displaymath}
where $\mathbf{N}_{a,b}$ is equivalent to $\mathbf{C}_{\mathcal{L}_{e}}$.
\item\label{thm64.3} The diagram of $\mathbf{M}_{a,b}$ is given 
(up to equivalence of $\mathbf{K}$ with $\mathbf{C}_{\mathcal{L}_{sts}}$ and of 
$\mathbf{N}_{a,b}$ with $\mathbf{C}_{\mathcal{L}_{e}}$) by
\begin{displaymath}
\xymatrix{ 
\mathbf{C}_{\mathcal{L}_{e}}\ar@{-}[d]|-{\{\theta_s,\theta_t,\theta_{st},\theta_{ts},\theta_{sts}\}}\\
\mathbf{C}_{\mathcal{L}_{sts}}
}
\end{displaymath}
\item\label{thm64.4} For two pairs $(a,b)$ and $(a',b')$ in $\mathbb{C}^2$ satisfying \eqref{eq6},
the following statements are equivalent.
\begin{enumerate}[$($a$)$]
\item\label{thm64.4.1}
The $2$-representations $\mathbf{M}_{a,b}$ and $\mathbf{M}_{a',b'}$ are equivalent.
\item\label{thm64.4.2}
The $\mathtt{C}$-modules $M_{a,b}$ and $M_{a',b'}$ are isomorphic.
\item\label{thm64.4.3}
We have $(a,b)=\lambda(a',b')$ for some non-zero $\lambda\in\mathbb{C}$.
\end{enumerate}
\end{enumerate}
\end{theorem}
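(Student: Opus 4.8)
The plan is to handle the four parts in order, using Lemma~\ref{lem61} as the engine for \eqref{thm64.1}--\eqref{thm64.3} and reserving the real work for the classification \eqref{thm64.4}. Throughout I use that $\theta_s,\theta_t$ act as $\mathtt{C}\otimes_{\mathtt{C}^s}(-)$ and $\mathtt{C}\otimes_{\mathtt{C}^t}(-)$, and that every $\theta_w$ with $w\neq e$ is a direct summand of a composition of copies of $\theta_s$ and $\theta_t$. For \eqref{thm64.1}, since Soergel bimodules are free as one-sided $\mathtt{C}$-modules, each $\theta_w$ preserves $\mathcal{C}_{\mathrm{proj}}$; and by Lemma~\ref{lem61}\eqref{lem61.2}--\eqref{lem61.3}, $M_{a,b}$ is free over $\mathtt{C}^s$ and over $\mathtt{C}^t$, so $\theta_s M_{a,b}=\mathtt{C}\otimes_{\mathtt{C}^s}M_{a,b}$ and $\theta_t M_{a,b}$ are free $\mathtt{C}$-modules, i.e. projective. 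As any $\theta_w$ with $w\neq e$ applies $\theta_s$ or $\theta_t$ first, $\theta_w M_{a,b}\in\mathcal{C}_{\mathrm{proj}}$ for all such $w$, while $\theta_e M_{a,b}=M_{a,b}$; hence $\mathcal{C}_{a,b}$ is $\cS$-stable, proving \eqref{thm64.1}. For \eqref{thm64.2}, $\mathbf{K}$ is by definition the restriction to $\mathcal{C}_{\mathrm{proj}}$, which is $\mathbf{C}_{\mathcal{L}_{sts}}$; passing to $\mathbf{N}_{a,b}$ kills all projectives, so $\mathbf{N}_{a,b}(\mathtt{i})$ has the single indecomposable object given by the image of $M_{a,b}$, with endomorphism ring the stable endomorphism algebra, which is $\Bbbk$ by Lemma~\ref{lem61}\eqref{lem61.4}, while every $\theta_w$ with $w\neq e$ acts as $0$. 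The universal property \cite[Lemma~3]{MM3}, exactly as in the proof of Proposition~\ref{lemma31}, then identifies $\mathbf{N}_{a,b}$ with $\mathbf{C}_{\mathcal{L}_e}$.

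For \eqref{thm64.3}, the diagram has the two vertices $\rho$ (for $\mathbf{N}_{a,b}$) and $\tau$ (for $\mathbf{K}$) by \eqref{thm64.2}. The decoration $\Theta_{\tau,\rho}$ records those indecomposable $\theta_w$ for which $\theta_w M_{a,b}$ has a projective summand; as $\mathtt{C}$ is local, $P$ is the only indecomposable projective, so I only need $\theta_w M_{a,b}\neq 0$ for $w\neq e$. Since $M_{a,b}$ is free of rank $\dim_{\Bbbk}M_{a,b}/\dim_{\Bbbk}\mathtt{C}^s=1$ over $\mathtt{C}^s$, one has $\theta_s M_{a,b}\cong P$ and likewise $\theta_t M_{a,b}\cong P$; feeding these through the multiplication table above (for instance $\theta_{st}M_{a,b}=\theta_s\theta_t M_{a,b}=\theta_s P=P^{\oplus 2}$, and $\theta_{sts}M_{a,b}\cong\mathtt{C}\otimes_{\Bbbk}M_{a,b}=P^{\oplus 3}$ using $B_{sts}=\mathtt{C}\otimes_{\Bbbk}\mathtt{C}$) shows $\theta_w M_{a,b}\neq 0$ for every $w\neq e$. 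As $\theta_e M_{a,b}=M_{a,b}$ is indecomposable and non-projective, the decoration is exactly $\{\theta_s,\theta_t,\theta_{st},\theta_{ts},\theta_{sts}\}$, giving \eqref{thm64.3}.

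It remains to prove \eqref{thm64.4}, for which I would close the cycle \eqref{thm64.4.3}$\Rightarrow$\eqref{thm64.4.2}$\Rightarrow$\eqref{thm64.4.1}$\Rightarrow$\eqref{thm64.4.3}. The first two implications are immediate: if $(a,b)=\lambda(a',b')$ then the generators $ax+by$ and $a'x+b'y$ span the same line, so $K_{a,b}=K_{a',b'}$, whence $M_{a,b}=M_{a',b'}$ and the categories $\mathcal{C}_{a,b}$, $\mathcal{C}_{a',b'}$ together with their $\cS$-actions literally coincide, proving \eqref{thm64.4.2} and \eqref{thm64.4.1}. The genuine content is \eqref{thm64.4.1}$\Rightarrow$\eqref{thm64.4.3}, and this is where I expect the main difficulty.

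Here is how I would attack it. An equivalence $\Phi$ of the two $2$-representations restricts to an equivalence $\mathcal{C}_{a,b}\to\mathcal{C}_{a',b'}$ intertwining all $\theta_w$ and all $2$-morphisms; as it must carry the subrepresentation $\mathbf{K}$ to $\mathbf{K}$, it sends $P\mapsto P$ and $M_{a,b}\mapsto M_{a',b'}$. Evaluating $\Phi$ on endomorphisms and homomorphisms from $P$ yields an algebra automorphism $\sigma$ of $\mathrm{End}(P)\cong\mathtt{C}$ together with a $\sigma$-semilinear isomorphism $\mathrm{Hom}(P,M_{a,b})\to\mathrm{Hom}(P,M_{a',b'})$; since $\mathrm{Hom}(P,M)\cong M$ recovers the $\mathtt{C}$-module structure, this is precisely a $\mathtt{C}$-module isomorphism $M_{a,b}\cong{}^{\sigma}M_{a',b'}\cong\mathtt{C}/\langle\sigma^{-1}(a'x+b'y)\rangle$. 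The crux is to pin down $\sigma$ on $\mathfrak{m}/\mathfrak{m}^2=\Bbbk x\oplus\Bbbk y$. Because $\Phi$ preserves the adjunction (co)evaluation $2$-morphisms of the fiat structure of $\cS$, whose effect on $\mathrm{End}(P)$ realizes the divided-difference operators $\partial_s,\partial_t$, the automorphism $\sigma$ must commute with $\partial_s$ and $\partial_t$. Comparing degree-zero components of $\partial_s(\sigma(\cdot))=\sigma(\partial_s(\cdot))$ and similarly for $\partial_t$, using $\partial_s(x)=1$, $\partial_s(y)=-1$, $\partial_t(x)=0$, $\partial_t(y)=1$ and $\sigma(1)=1$, forces $\sigma$ to induce the identity on $\mathfrak{m}/\mathfrak{m}^2$. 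Then $\sigma^{-1}(a'x+b'y)$ lies in $\mathfrak{m}$ with image $a'x+b'y$ in $\mathfrak{m}/\mathfrak{m}^2$; since isomorphic cyclic $\mathtt{C}$-modules have equal annihilators, $\langle ax+by\rangle=\langle\sigma^{-1}(a'x+b'y)\rangle$, and equal principal submodules with generators outside $\mathfrak{m}^2$ have proportional images in $\mathfrak{m}/\mathfrak{m}^2$, giving $(a,b)=\lambda(a',b')$ and \eqref{thm64.4.3}. I expect the delicate step to be the bookkeeping that $\Phi$ genuinely transports the evaluation/coevaluation $2$-morphisms and that their action on $\mathrm{End}(P)$ is exactly $\partial_s,\partial_t$; once that is in place, the linear algebra on $\mathfrak{m}/\mathfrak{m}^2$ is immediate.
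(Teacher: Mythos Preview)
Your proofs of \eqref{thm64.1}--\eqref{thm64.3} are essentially the paper's: generation of $\cS$ by $\theta_s,\theta_t$, projectivity of $M_{a,b}$ over $\mathtt{C}^s$ and $\mathtt{C}^t$ from Lemma~\ref{lem61}, and the stable endomorphism computation. The paper is terser on \eqref{thm64.3} but the content is identical.

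For \eqref{thm64.4} your argument is plausible but takes a considerably longer route than the paper, and the detour is avoidable. You extract an automorphism $\sigma$ of $\mathtt{C}\cong\mathrm{End}(P)$ from the equivalence $\Phi$ and then work to pin $\sigma$ down on $\mathfrak{m}/\mathfrak{m}^2$ using that $\Phi$ transports the adjunction (co)units, whose effect on $\mathrm{End}(P)$ you identify with the Demazure operators $\partial_s,\partial_t$. This can be made to work, but the ``delicate step'' you flag is genuine bookkeeping and is not needed. The paper's observation is that $\mathtt{C}$ is already present inside $\cS$ as the endomorphism algebra of the \emph{identity $1$-morphism} $\theta_e$, not merely as $\mathrm{End}(P)$. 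For each $c\in\mathrm{End}_{\cS}(\theta_e)=\mathtt{C}$, evaluation gives the multiplication-by-$c$ endomorphism of $M_{a,b}$, with kernel $K_{a,b}$. A $2$-natural transformation $\Phi$ intertwines the action of every $2$-morphism by the \emph{same} $2$-morphism, so $\Phi$ sends multiplication-by-$c$ on $M_{a,b}$ to multiplication-by-$c$ on $M_{a',b'}$; no twisting automorphism $\sigma$ ever enters. Faithfulness of $\Phi$ then yields $K_{a,b}=K_{a',b'}$ immediately, hence $M_{a,b}=M_{a',b'}$ and $(a,b)=\lambda(a',b')$. In short: use the $2$-morphisms of $\cS$ directly rather than reconstructing $\mathtt{C}$ from $\mathrm{End}(P)$; this replaces your divided-difference argument by a one-line appeal to the definition of equivalence of $2$-representations.
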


\begin{proof}
On the level of $1$-morphism, $\cS$ is generated by $\theta_s$ and $\theta_t$. Therefore, to prove 
claim~\eqref{thm64.1}, it is enough to check that $\mathcal{C}_{a,b}$ is stable under the action of 
both $\theta_s$ and $\theta_t$. In terms of Soergel bimodules, the action of $\theta_s$ is given
by tensoring with $\mathtt{C}\otimes_{\mathtt{C}^s}\mathtt{C}$. By Lemma~\ref{lem61}\eqref{lem61.2},
the module $M_{a,b}$ is projective as $\mathtt{C}^s$-module. Therefore 
$\mathtt{C}\otimes_{\mathtt{C}^s}\mathtt{C}\otimes_{\mathtt{C}}M_{a,b}$ is projective as 
$\mathtt{C}$-module. Consequently, the action of $\theta_s$ preserves $\mathcal{C}_{a,b}$.
Similarly one shows that the action of $\theta_t$ preserves $\mathcal{C}_{a,b}$.
Claim~\eqref{thm64.1} follows.

To prove claim~\eqref{thm64.2}, we need to show that the quotient $\mathbf{N}_{a,b}$ defined
by the short exact sequence is equivalent to $\mathbf{C}_{\mathcal{L}_{e}}$.
From the previous paragraph we see that both $\theta_s$ and $\theta_t$ map $M_{a,b}$ to a projective 
$\mathtt{C}$-module. By Lemma~\ref{lem61}\eqref{lem61.3}, the stable endomorphism algebra of 
$M_{a,b}$ is trivial. However, this algebra coincides, by construction, with the endomorphism algebra
of the image of $M_{a,b}$ in $\mathbf{N}_{a,b}(\mathtt{i})$. Put together, this implies
claim~\eqref{thm64.2}. Claim~\eqref{thm64.3} follows now from claim~\eqref{thm64.2} and the observations
in the previous paragraph.

It remains to prove claim~\eqref{thm64.4}. Both equivalence of 
\eqref{thm64.4.2} and \eqref{thm64.4.3} and the fact that  \eqref{thm64.4.2} implies
\eqref{thm64.4.1} follow directly from constructions. So, we only need to show that 
\eqref{thm64.4.1} implies \eqref{thm64.4.2}. Assume that $\mathbf{M}_{a,b}$ and $\mathbf{M}_{a',b'}$ 
are equivalent and let $\Phi:\mathbf{M}_{a,b}(\mathtt{i})\to \mathbf{M}_{a',b'}(\mathtt{i})$ be 
the corresponding equivalence. Then we certainly have $\Phi(M_{a,b})\cong M_{a',b'}$. The endomorphism
algebra of the identity $1$-morphism in $\cS$ is, by construction, $\mathtt{C}$. This surjects,
via evaluation, onto the endomorphism algebra of both $M_{a,b}$ (with kernel $K_{a,b}$) and
$M_{a',b'}$ (with kernel $K_{a',b'}$). As $\Phi$ intertwines the actions of  $\cS$ on 
$\mathbf{M}_{a,b}(\mathtt{i})$ and $\mathbf{M}_{a',b'}(\mathtt{i})$, it follows that 
$K_{a,b}=K_{a',b'}$ and hence  $M_{a,b}={M}_{a',b'}$. This completes the proof.
\end{proof}

Theorem~\ref{thm64}\eqref{thm64.4} implies that there are, essentially, two different choices for
$M_{a,b}$, namely, $(a,b)=(1,\frac{1\pm\mathbf{i}\sqrt{3}}{2})$, where $\mathbf{i}^2=-1$.

\subsection{Analogous  discrete extension for $\cC_A\boxtimes \cC_{A^{\mathrm{op}}}$}\label{s6.3}

The example provided in the previous subsection motivates a series of examples described in the
present subsection.

Let $A$ be a basic and connected finite dimensional $\Bbbk$-algebra which we also assume to be non-simple.
Set $B:=A^{\mathrm{op}}$ and consider the enveloping algebra  $E:=A\otimes_{\Bbbk}B$. Let
$\mathcal{C}$ be a small category equivalent to
$E\text{-}\mathrm{mod}$. An endofunctor of $\mathcal{C}$ is called {\em good} provided that it belongs to 
the additive closure of the following functors:
\begin{itemize}
\item the identity functor;
\item tensoring with projective $E$-$E$--bimodules;
\item tensoring with the {\em left component projective} $E$-$E$--bimodule 
\begin{displaymath}
({}_AA\otimes_{\Bbbk}A_A)
\otimes_{\Bbbk}({}_{B}B_{B}); 
\end{displaymath}
\item tensoring with the {\em right component  projective}  $E$-$E$--bimodule 
\begin{displaymath}
({}_AA_A)\otimes_{\Bbbk}({}_{B}B\otimes_{\Bbbk}B_{B}). 
\end{displaymath}
\end{itemize}
If we view $E$-modules as $A$-$A$--bimodules (here the right action of $A$ corresponds to the left
action of $B$), then tensoring with left component projective $E$-$E$--bimodules
corresponds to tensoring with projective $A$-$A$--bimodules from the left. Similarly, tensoring with 
right component projective $E$-$E$--bimodules corresponds to tensoring with projective $A$-$A$--bimodules 
from the right. Denote by $\cD_A$ the $2$-category defined to have
\begin{itemize}
\item one object $\mathtt{i}$ which can be identified with $\mathcal{C}$;
\item as $1$-morphisms, all {\em good} endofunctors of $\mathcal{C}$;
\item as $2$-morphisms, all natural transformations of functors.
\end{itemize}
This $2$-category can be viewed as the $2$-category $\cC_A\boxtimes \cC_B$ in the notation of 
\cite[Subsection~6.1]{MM6}. The $2$-category $\cD_A$ has four two-sided cells:
\begin{itemize}
\item the two-sided cell $\mathcal{J}_1$ containing the identity $1$-morphism;
\item the two-sided cell $\mathcal{J}_2$ containing indecomposable functors corresponding to
left component projective bimodules;
\item the two-sided cell $\mathcal{J}_3$ containing indecomposable functors corresponding to
right component projective bimodules;
\item the two-sided cell $\mathcal{J}_4$ containing indecomposable functors corresponding to
projective bimodules. 
\end{itemize}
The Hasse diagram of the two-sided order is then given by
\begin{displaymath}
\xymatrix{
&\mathcal{J}_1\ar@{-}[dr]\ar@{-}[dl]&\\
\mathcal{J}_2\ar@{-}[dr]&&\mathcal{J}_3\ar@{-}[dl]\\
&\mathcal{J}_4&\\
}
\end{displaymath}
The two-sided cell $\mathcal{J}_1$ is also a left cell, which we denote by $\mathcal{L}_1$.
We fix some left cell $\mathcal{L}$ in $\mathcal{J}_4$.

Let $M$ be a $E$-module with the following properties:
\begin{itemize}
\item $M$ is indecomposable;
\item the stable endomorphism algebra of $M$ is trivial (i.e. isomorphic to $\Bbbk$);
\item $M$ is projective as an $A$-module;
\item $M$ is projective as a $B$-module.
\end{itemize}
We will comment on existence of such $M$ later ({\em a priori} there is no reason why such $M$ should exist), 
for the moment we just assume that it exists.  Let $\mathcal{C}_{\mathrm{proj}}$ 
be the category of projective objects
in $\mathcal{C}$ and $\mathcal{C}_{M}$ be the additive closure of $\mathcal{C}_{\mathrm{proj}}$ together
with $M$. Note that the action of $\cD_A$ restricts to $\mathcal{C}_{\mathrm{proj}}$, so we can denote by 
$\mathbf{K}$ the corresponding $2$-representation of $\cD_A$.
Then we have the following analogue of Theorem~\ref{thm64} in our present setup:

\begin{proposition}\label{prop71}
{\hspace{2mm}}

\begin{enumerate}[$($i$)$]\label{thm71}
\item\label{thm71.1} The $2$-representation $\mathbf{K}$ of $\cD_A$ is equivalent to $\mathbf{C}_\mathcal{L}$.
\item\label{thm71.2} The action of $\cD_A$ restricts to $\mathcal{C}_{M}$ and we denote by 
$\mathbf{M}_{M}$ the corresponding $2$-representation of $\cD_A$.
\item\label{thm71.3} We have a short exact sequence of $2$-representations as follows:
\begin{displaymath}
0\to  \mathbf{K}\to \mathbf{M}_{M}\to \mathbf{N}_{M}\to 0,
\end{displaymath}
where $\mathbf{N}_{M}$ is equivalent to $\mathbf{C}_{\mathcal{L}_{1}}$.
\item\label{thm71.4} The diagram of $\mathbf{M}_{M}$ is given by
\begin{displaymath}
\xymatrix{ 
\mathbf{C}_{\mathcal{L}_{i}}\ar@{-}[d]|-{X}\\
\mathbf{C}_{\mathcal{L}}
}
\end{displaymath}
where $X$ is a union of left cells which contains at least one left cell from $\mathcal{J}_2$,
$\mathcal{J}_3$ and $\mathcal{J}_4$.
\end{enumerate}
\end{proposition}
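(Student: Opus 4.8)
The plan is to follow the template of the proof of Theorem~\ref{thm64}, replacing the coinvariant algebra and its two subalgebras $\mathtt{C}^s,\mathtt{C}^t$ by the enveloping algebra $E=A\otimes_{\Bbbk}B$ together with its left and right one-sided structures, and replacing the explicit computations of Lemma~\ref{lem61} by the four assumed properties of $M$. For claim~\eqref{thm71.1}, I would argue exactly as in the penultimate paragraph of the proof of Proposition~\ref{lemma31}: mapping the identity $1$-morphism of $\cD_A$ to a simple $E$-module yields, by the universal property \cite[Lemma~3]{MM3}, a $2$-natural transformation from $\mathbf{P}_{\mathtt{i}}$ whose restriction sends the $1$-morphisms of $\mathcal{J}_4$ to projective $E$-modules and induces an equivalence between $\mathbf{C}_{\mathcal{L}}$ and the $2$-representation on $\mathcal{C}_{\mathrm{proj}}$, the latter being $\mathbf{K}$ by definition. (Alternatively, one may invoke the description of $\cD_A$ as $\cC_A\boxtimes\cC_B$ together with the behaviour of cell $2$-representations under external tensor products.)

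For claim~\eqref{thm71.2}, the essential computation is that every non-identity indecomposable good $1$-morphism sends $M$ into $\mathcal{C}_{\mathrm{proj}}$. Viewing $E$-modules as $A$-$A$--bimodules, a left component projective functor acts on $M$ as $Ae_i\otimes_{\Bbbk}(e_jM)$, whose right $A$-structure is the summand $e_jM$ of the decomposition $M=\bigoplus_j e_jM$ of $M$ as a right $A$-module; since $M$ is projective as a $B$-module, each $e_jM$ is a projective right $A$-module, so $Ae_i\otimes_{\Bbbk}(e_jM)$ is a projective $E$-module. Dually, a right component projective functor sends $M$ to $(Me_i)\otimes_{\Bbbk}e_jA$, and projectivity of $M$ as an $A$-module makes each summand $Me_i$ of $M=\bigoplus_i Me_i$ a projective left $A$-module, again giving a projective $E$-module; a full projective functor sends $M$ to $Ee_\alpha\otimes_{\Bbbk}(e_\beta M)$, which is projective in any case. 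Since good functors preserve $\mathcal{C}_{\mathrm{proj}}$, this shows that $\mathcal{C}_M$ is stable under $\cD_A$.

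For claim~\eqref{thm71.3}, I note that, by the previous step, every non-identity indecomposable good $1$-morphism annihilates the image of $M$ in $\mathbf{N}_M$, while the endomorphism algebra of that image equals the stable endomorphism algebra of $M$, which is $\Bbbk$ by assumption. Hence $\mathbf{N}_M(\mathtt{i})$ is equivalent to $\Bbbk\text{-}\mathrm{mod}$ and $\mathbf{N}_M$ is supported only on the identity $1$-morphism, so the universal property \cite[Lemma~3]{MM3} identifies $\mathbf{N}_M$ with $\mathbf{C}_{\mathcal{L}_1}$, exactly as in Theorem~\ref{thm64}\eqref{thm64.2}. This yields the short exact sequence and, together with claim~\eqref{thm71.1}, the fact that the diagram has exactly the two vertices $\mathbf{C}_{\mathcal{L}_1}$ and $\mathbf{C}_{\mathcal{L}}$.

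Finally, claim~\eqref{thm71.4} is where the qualitatively new feature appears. Because every non-identity indecomposable good $1$-morphism $\mathrm{F}$ sends $M$ into $\mathcal{C}_{\mathrm{proj}}=\mathbf{K}$, we have $\mathrm{F}\in X$ precisely when $\mathrm{F}\,M\neq 0$; and from the formulas in claim~\eqref{thm71.2} the nonvanishing of $\mathrm{F}\,M$ depends only on the source idempotent of the corresponding bimodule, that is, only on the left cell of $\mathrm{F}$. Thus $X$ is automatically a union of left cells, which bypasses the composition argument used in Theorem~\ref{thm32}\eqref{thm32.4}. It remains to check that $X$ meets each of $\mathcal{J}_2,\mathcal{J}_3,\mathcal{J}_4$: since $M\neq 0$, it has a nonzero left-$A$ weight $e_jM$, a nonzero right-$A$ weight $Me_i$, and a nonzero $E$-weight $e_\beta M$, which produce nonzero projective images under a left component, a right component, and a full projective functor respectively, placing at least one left cell of each of $\mathcal{J}_2$, $\mathcal{J}_3$ and $\mathcal{J}_4$ inside $X$. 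I expect the main obstacle to be the bookkeeping in claim~\eqref{thm71.2}, where one must keep straight which one-sided projectivity controls which class of one-sided good functors: projectivity over $B$ governs the left component functors, while projectivity over $A$ governs the right component functors.
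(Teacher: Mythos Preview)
Your proposal is correct and follows essentially the same approach as the paper's own proof. The paper's argument for \eqref{thm71.2} is phrased at the level of the full bimodule $A\otimes_{\Bbbk}A$ rather than its indecomposable summands (writing $A\otimes_{\Bbbk}A\otimes_A M\cong A\otimes_{\Bbbk}M$ and invoking right $A$-projectivity of $M$), and for \eqref{thm71.4} the paper records the nonvanishing criterion via simple subquotients $L_{e,e'}$ of $M$ rather than directly via $e_jM\neq 0$; but these are cosmetic differences, and your bookkeeping of which one-sided projectivity governs which class of component functors matches the paper exactly.
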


\begin{proof}
Claim~\eqref{thm71.1} is proved by the following standard argument, cf. \cite[Subsection~6.5]{MM2}
or \cite[Section~5]{MM5}.
Let $\mathrm{G}$ be the indecomposable $1$-morphism in $\mathcal{L}$ which corresponds to
tensoring with the $E$-$E$--bimodule $Ee\otimes_{\Bbbk}eE$, where $e\in E$ is a primitive idempotent. 
Let $L$ be a simple $E$-module such that $eL\neq 0$. Then restriction of the action of $\cD_A$ 
to the additive closure of the objects of the form $\mathrm{F}\, L$, where $\mathrm{F}\in\mathcal{L}$, 
defines an equivalence between $\mathbf{C}_\mathcal{L}$ and $\mathbf{K}$. This proves 
claim~\eqref{thm71.1}.

To prove claim~\eqref{thm71.2}, we just need to check that, for any $1$-morphism 
$\mathrm{F}\in\mathcal{J}_2\cup\mathcal{J}_3$, the $E$-module $\mathrm{F}\, M$ is projective.
Consider $M$ as an $A$-$A$--bimodule. Then we have
\begin{displaymath}
A\otimes_{\Bbbk}A\otimes_A M\cong A\otimes_{\Bbbk} M 
\end{displaymath}
which is projective as an $A$-$A$--bimodule since $M$ is projective as a right $A$-module.
This checks the necessary claim for $\mathrm{F}\in\mathcal{J}_2$. For $\mathrm{F}\in\mathcal{J}_3$,
the arguments are similar using right tensoring with projective $A$-$A$--bimodules and left $A$-projectivity of $M$.
This proves  claim~\eqref{thm71.2}. Claim~\eqref{thm71.3} follows immediately using the fact that 
$M$   has trivial stable endomorphism algebra.

Primitive idempotents in $E$ are of the form $e\otimes e'$, where  $e$ and $e'$ are primitive idempotents
in $A$. Let $L_{e,e'}$ be the corresponding simple $E$-module. Consider again $M$ as an $A$-$A$--bimodule.
Then a $1$-morphism $\mathrm{F}$ in $\mathcal{J}_2$ corresponds to left tensoring with some
$As\otimes_{\Bbbk}tA$, where $s$ and $t$ are primitive idempotents in $A$. The left cell containing
$\mathrm{F}$ is obtained by fixing $t$ and varying $s$. We have $\mathrm{F}\, L_{e,e'}\neq 0$ if and only if
$t=e$. This implies that the intersection $X\cap\mathcal{J}_2$ is a non-empty union of left cells.
A similar argument also works for $X\cap\mathcal{J}_3$ and $X\cap\mathcal{J}_4$. 
This proves  claim~\eqref{thm71.4} and completes the proof of our proposition.
\end{proof}

We note that the proof of Theorem~\ref{thm71}\eqref{thm71.4} also provides an explicit description of the set $X$
in terms of composition factors of $M$, cf. Theorem~\ref{thm32}\eqref{thm32.4}.

As mentioned above, existence of $M$ is not obvious in the general situation. The main problem is the
requirement that the stable endomorphism algebra of $M$ is trivial.
Below we would like to describe some situations in which a $E$-module $M$ satisfying the above conditions exists.

\begin{lemma}\label{lem71}
{\hspace{2mm}}

\begin{enumerate}[$($i$)$]
\item\label{lem71.1} If the center of $A$ is trivial, then one can take $M={}_AA_A$.
\item\label{lem71.2} If $\mathrm{char}(\Bbbk)\neq 2$ and $A=\Bbbk[x]/(x^2)$, then one can take $M={}_AA_A$.
\item\label{lem71.3} If $\mathrm{char}(\Bbbk)\neq 2$ and $A$ is the following quiver algebra:
\begin{displaymath}
\xymatrix{ 
1\ar@/^/[r]^{\alpha}&2\ar@/^/[r]^{\alpha}\ar@/^/[l]^{\beta}&
3\ar@/^/[r]^{\alpha}\ar@/^/[l]^{\beta}&\dots\ar@/^/[r]^{\alpha}\ar@/^/[l]^{\beta}&n\ar@/^/[l]^{\beta}
},\qquad\alpha^2=\beta^2=0,\quad \alpha\beta=\beta\alpha,
\end{displaymath}
then one can take $M={}_AA_A$.
\end{enumerate}
\end{lemma}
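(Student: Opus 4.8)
The plan is to take $M={}_AA_A$, the regular $A$-bimodule regarded as an $E$-module, in all three cases, and to check the four defining properties; three of them hold uniformly, and only the last one---triviality of the stable endomorphism algebra---feels the difference between the cases. First I would record that $\mathrm{End}_E(M)\cong Z(A)$, the centre of $A$, since a bimodule endomorphism of $A$ is exactly multiplication by a central element. As $A$ is connected it has no nontrivial central idempotents, so $Z(A)$ is local with residue field $\Bbbk$; in particular $M$ is indecomposable. Since ${}_AA$ and $A_A$ are free, $M$ is projective as a left and as a right $A$-module, which gives projectivity over $A$ and over $B$. Finally, as $A$ is basic, connected and non-simple over the algebraically closed field $\Bbbk$, it is not semisimple, hence not separable, so $M$ is \emph{not} projective as an $E$-module.

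It then remains to prove that the stable endomorphism algebra of $M$ is $\Bbbk$. Let $I\subseteq Z(A)$ be the ideal of central elements $z$ for which multiplication $m_z\colon A\to A$ factors through a projective $E$-module; the stable endomorphism algebra is $Z(A)/I$. Since $M$ is not projective, $1\notin I$, so $I$ is a proper ideal of the local ring $Z(A)$ and hence $I\subseteq\mathrm{rad}(Z(A))$. Thus $Z(A)/I$ admits $Z(A)/\mathrm{rad}(Z(A))=\Bbbk$ as a quotient, and the two agree precisely when $I=\mathrm{rad}(Z(A))$. So everything reduces to showing that every element of $\mathrm{rad}(Z(A))$ is projectively trivial. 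In case~(i) this is automatic, as $Z(A)=\Bbbk$ forces $\mathrm{rad}(Z(A))=0$; this is why no characteristic hypothesis is needed there.

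For cases~(ii) and~(iii) I would use the following factorization device. A bimodule map $A\to A\otimes A$ is the same as an element $\omega$ of $A\otimes A$ with $a\omega=\omega a$ for all $a\in A$, via $1\mapsto\omega$; composing with the multiplication $\mu\colon A\otimes A\to A$ yields the endomorphism $m_{\mu(\omega)}$ of $A$, because a bimodule endomorphism of $A$ is determined by the image of $1$. Hence a central $z$ lies in $I$ whenever $z=\mu(\omega)$ for some such $\omega$. In case~(ii) we have $A=\Bbbk[x]/(x^2)$, so $Z(A)=A$ and $\mathrm{rad}(Z(A))=\Bbbk x$; taking $\omega=\tfrac12(x\otimes 1+1\otimes x)$ one checks that $\omega$ is bimodule-central and that $\mu(\omega)=x$, exhibiting $x\in I$. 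This single computation is where $\mathrm{char}(\Bbbk)\neq 2$ enters, since $\mu(x\otimes 1+1\otimes x)=2x$. In case~(iii) the algebra is a symmetric zigzag-type algebra; one computes that $Z(A)$ is spanned by $1$ together with the socle loops $c_1,\dots,c_n$ at the vertices, so $\mathrm{rad}(Z(A))=\bigoplus_i\Bbbk c_i$, and for each $i$ I would exhibit a bimodule-central $\omega_i$ with $\mu(\omega_i)=c_i$, again through a symmetric ansatz that forces a division by $2$.

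The hard part is the explicit construction in case~(iii): one must first describe the bimodule-central elements of $A\otimes A$ for the zigzag algebra, and then, vertex by vertex, solve the resulting small linear systems to produce the $\omega_i$. The interplay of the relations $\alpha^2=\beta^2=0$ and $\alpha\beta=\beta\alpha$ with the socle makes this bookkeeping the delicate step, and it is precisely here that one confirms $\mathrm{char}(\Bbbk)\neq 2$ to be the right hypothesis. By contrast, case~(i) is immediate and case~(ii) is a direct two-dimensional check.
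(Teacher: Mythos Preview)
Your proposal is correct and follows essentially the same strategy as the paper: reduce to the stable endomorphism algebra via $\mathrm{End}_{A\text{-}A}({}_AA_A)\cong Z(A)$, and then show that every element of $\mathrm{rad}(Z(A))$ factors through a projective bimodule by exhibiting a suitable map $A\to A\otimes_{\Bbbk}A\to A$. For case~(ii) your element $\omega=\tfrac12(x\otimes 1+1\otimes x)$ is exactly the paper's construction.

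The only visible difference is in case~(iii). You phrase the problem as finding bimodule-central elements $\omega_i\in A\otimes A$ with $\mu(\omega_i)=c_i$, whereas the paper works more diagrammatically: it exhibits a four-dimensional subquotient $Q$ of ${}_AA_A$ around vertex~$1$, writes down the Loewy diagram of the indecomposable projective bimodule $P_{11}$, and observes that the unique (up to scalar) composition $Q\hookrightarrow P_{11}\twoheadrightarrow Q$ is nonzero precisely because $\mathrm{char}(\Bbbk)\neq 2$ --- the embedding hits the \emph{sum} of the two middle $11$-components of $P_{11}$ while the kernel of the projection contains their \emph{difference}. This pictorial argument and your $\omega_i$-search are two packagings of the same linear-algebra computation; the paper's version has the advantage of making the role of the factor $2$ immediately visible without solving any equations, while your formulation is cleaner conceptually and also makes explicit (via non-separability of $A$) why the identity does \emph{not} lie in $I$, a point the paper leaves implicit.
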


\begin{proof}
Clearly, ${}_AA_A$ is an indecomposable $A$-$A$--bimodule, which is projective both as a left and as a right
$A$-module. Further, $\mathrm{End}_{A\text{-}A}({}_AA_A)\cong Z(A)$, the center of $A$. This immediately implies
claim~\eqref{lem71.1}. 

To prove claim~\eqref{lem71.2}, we need to check that the stable endomorphism algebra of $A=\Bbbk[x]/(x^2)$ is
trivial. We have $\mathrm{End}_{A\text{-}A}({}_AA_A)=A$, so we just need to check that the endomorphism given
by multiplication with $x$ factors through projective $A$-$A$--bimodules.
There is a unique (up to scalar) homomorphism from $A$ to $A\otimes_{\Bbbk}A$ sending 
$1$ to $1\otimes x+x\otimes 1$. There is a unique (up to scalar) homomorphism from $A\otimes_{\Bbbk}A$ to $A$ sending 
$1\otimes 1$ to $1$. The composition $A\to A\otimes_{\Bbbk}A\to A$ thus sends $1$ to $2x$. 
As $\mathrm{char}(\Bbbk)\neq 2$, this composition is non-zero. This implies claim~\eqref{lem71.2}.

The quiver algebra in claim~\eqref{lem71.3} is naturally graded (all arrows have degree one). The
center of this algebra is also graded and has only elements of degree zero and two. The zero
part is one-dimensional, so to prove claim~\eqref{lem71.3} we just need to show that the degree
two part, which has dimension $n$, 
factors through projective $A$-$A$--bimodules. Here are the diagrams of projective  $A$-modules
(we simply write $i$ for the simple $A$-module corresponding to the vertex $i$):
\begin{displaymath}
\xymatrix{1\ar@{-}[d]\\2\ar@{-}[d]\\1}\quad
\xymatrix{&2\ar@{-}[dr]\ar@{-}[dl]&\\1\ar@{-}[dr]&&3\ar@{-}[dl]\\&2&}\quad
\xymatrix{\\\dots\\}\quad
\xymatrix{&n-1\ar@{-}[dr]\ar@{-}[dl]&\\n-2\ar@{-}[dr]&&n\ar@{-}[dl]\\&n-1&}\quad
\xymatrix{n\ar@{-}[d]\\n-1\ar@{-}[d]\\n}
\end{displaymath}
Further, the diagram of the $A$-$A$--bimodule ${}_AA_A$ is as follows
(we simply write $ij$ for the simple $A$-$A$--bimodule corresponding to the pair $(i,j)$ of vertices):
\begin{displaymath}
\xymatrix@!=0.7pc{
11\ar@{-}[d]\ar@{-}[dr]&  &22\ar@{-}[dr]\ar@{-}[drr]\ar@{-}[dl]\ar@{-}[dll]&  
&\dots\ar@{-}[dl]\ar@{-}[dr]&      &\text{$n$-$1$$n$-$1$}\ar@{-}[dr]\ar@{-}[drr]\ar@{-}[dl]\ar@{-}[dll]&    
&nn\ar@{-}[d]\ar@{-}[dl]\\
21\ar@{-}[d]\ar@{-}[drr] &12\ar@{-}[dr]\ar@{-}[dl]&  
&32\ar@{-}[dr]\ar@{-}[dl]&\dots\ar@{-}[dll]\ar@{-}[drr]&
\text{$n$-$2$$n$-$1$}\ar@{-}[dr]\ar@{-}[dl]&      
&\text{$n$$n$-$1$}\ar@{-}[dr]\ar@{-}[dl]&\text{$n$-$1$$n$}\ar@{-}[d]\ar@{-}[dll]\\
11&  &22&  &\dots&      &\text{$n$-$1$$n$-$1$}&    &nn
}
\end{displaymath}
Consider the subquotient $Q$ of the latter given by 
\begin{displaymath}
\xymatrix{
11\ar@{-}[d]\ar@{-}[dr]&\\
21\ar@{-}[d] &\ar@{-}[dl]12\\
11&  
}
\end{displaymath}
The diagram of the projective $A$-$A$--bimodule $P_{11}$ with top $11$ is as follows:
\begin{displaymath}
\xymatrix{
&&11\ar@{-}[dl]\ar@{-}[dr]&&\\
&21\ar@{-}[dl]\ar@{-}[dr]&&12\ar@{-}[dl]\ar@{-}[dr]&\\
11\ar@{-}[dr]&&22\ar@{-}[dl]\ar@{-}[dr]&&11\ar@{-}[dl]\\
&12\ar@{-}[dr]&&21\ar@{-}[dl]&\\
&&11&&\\
}
\end{displaymath}
There is a unique, up to scalar, homogeneous embedding from $Q$ to an appropriate graded shift of $P_{11}$ 
and  also unique, up to scalar, homogeneous projection from $P_{11}$ onto $Q$. As $\mathrm{char}(\Bbbk)\neq 2$, 
the corresponding composition  $Q\hookrightarrow P_{11}\tto Q$ is non-zero. 
Indeed, the embedding $Q\hookrightarrow P_{11}$ maps the generator of $Q$ to the sum of the two
$11$-components which are in the middle layer of $P_{11}$, while the kernel of the projection $P_{11}\tto Q$
contains the difference of these components. This implies 
that the nilpotent endomorphism of the first projective $A$-module is annihilated in the stable category. 
Similarly one shows that all nilpotent endomorphisms of all projective $A$-module are annihilated in the 
stable category. This implies claim~\eqref{lem71.3} and completes the proof.
\end{proof}

Both the statement of Lemma~\ref{lem71}\eqref{lem71.3} and its proof generalize to arbitrary finite-dimensional 
zigzag algebras from \cite{HK}.

%\vspace{1cm}

\noindent
Aaron Chan: Department of Mathematics, Uppsala University,
Box 480, SE-751~06, Uppsala, SWEDEN, {\tt aaron.chan\symbol{64}math.uu.se}

\noindent
Volodymyr Mazorchuk: Department of Mathematics, Uppsala University,
Box 480, SE-751 06, Uppsala, SWEDEN, {\tt mazor\symbol{64}math.uu.se}

\end{document}